\DeclareFontFamily{U}{mathx}{}
\DeclareFontShape{U}{mathx}{m}{n}{<-> mathx10}{}
\DeclareSymbolFont{mathx}{U}{mathx}{m}{n}
\DeclareMathAccent{\widehat}{0}{mathx}{"70}
\DeclareMathAccent{\widecheck}{0}{mathx}{"71}
\DeclareMathOperator{\spa}{span}
\DeclareMathOperator{\cb}{cb}
\DeclareMathOperator{\jcb}{jcb}
\DeclareMathOperator{\CB}{\mathcal{CB}}
\DeclareMathOperator{\amconv}{amconv}
\DeclareMathOperator{\MIN}{MIN}
\DeclareMathOperator{\MAX}{MAX}
\newcommand{\Hcb}{\mathcal{H}^\infty_{\cb}}
\newcommand{\Gcb}{\mathcal{G}^\infty_{\cb}}
\newcommand{\n}[1]{ \left\|#1\right\| }
\newcommand{\bn}[1]{ \big\|#1\big\| }
\newcommand{\tn}[1]{{\left\vert\kern-0.25ex\left\vert\kern-0.25ex\left\vert #1 
    \right\vert\kern-0.25ex\right\vert\kern-0.25ex\right\vert}}
\newcommand{\N}{{\mathbb{N}}}
\newcommand{\C}{{\mathbb{C}}}
\newcommand{\D}{{\mathbb{D}}}
\newcommand{\T}{{\mathbb{T}}}
\newcommand{\K}{{\mathcal{K}}}
\newcommand{\pair}[2]{{\langle #1, #2 \rangle}}
\newcommand{\mpair}[2]{{\langle\langle #1, #2 \rangle\rangle}}
\newtheorem{theorem}{Theorem}[section]
\newtheorem{lemma}[theorem]{Lemma}
\newtheorem{definition}[theorem]{Definition}
\newtheorem{corollary}[theorem]{Corollary}
\newtheorem{proposition}[theorem]{Proposition}
\theoremstyle{definition}
\newtheorem{remark}[theorem]{Remark}
\newtheorem{example}[theorem]{Example}
\newtheorem{question}[theorem]{Question}
\numberwithin{equation}{section}
\def\ker{{\rm ker\, }}
\keywords{Holomorphic functions, Operator spaces, Linearization}
\subjclass{46G20, 46T25, 47L25, 46L07}
\begin{document}

\title[Linearizing holomorphic functions on operator spaces]{Linearizing holomorphic functions on operator spaces}

\author[J.A. Ch\'avez-Dom\'inguez]{Javier Alejandro Ch\'avez-Dom\'inguez}
\address{Department of Mathematics, University of Oklahoma, Norman, OK 73019-3103,
USA} \email{jachavezd@ou.edu}

\author[V. Dimant]{Ver\'onica Dimant}
\address{Departamento de Matem\'{a}tica y Ciencias, Universidad de San
Andr\'{e}s, Vito Dumas 284, (B1644BID) Victoria, Buenos Aires,
Argentina and CONICET} \email{vero@udesa.edu.ar}

\thanks{The first-named author was partially supported by NSF grants DMS-1900985 and DMS-2247374. 
The second-named author was partially supported by CONICET PIP 11220200101609CO and ANPCyT PICT 2018-04104.
}

\date{}

\begin{abstract}
We introduce a notion of completely bounded holomorphic functions defined on the open unit ball of an operator space.
We endow the set of these functions with an operator space structure, and in the scalar-valued case we identify an operator space predual for it which is a noncommutative version of Mujica's predual for the space of bounded holomorphic functions and satisfies similar properties.
In particular, our predual is a free holomorphic operator space in the sense that it satisfies a linearization property for vector-valued completely bounded holomorphic functions.
Additionally, several different operator space approximation properties transfer between the predual and the domain.
\end{abstract}

\maketitle

\section{Introduction}

Operator spaces are a quantized or noncommutative version of Banach spaces, with origins and important applications in the theory of operator algebras. Recall that an operator space $V$ is a Banach space with an additional structure, namely a sequence of norms on the spaces $M_n(V)$ of $n \times n$ matrices with entries in $V$ (satisfying certain consistency requirements). A morphism between operator spaces $V$ and $W$ is no longer simply a bounded linear mapping $T: V \to W$, but one that is \emph{completely bounded}, meaning that all their amplifications $T_n : M_n(V) \to M_n(W)$ defined by applying $T$ entrywise are uniformly bounded. 
The systematic study of operator spaces started with Ruan's thesis, and was developed mainly by Effros and Ruan, Blecher and Paulsen, Pisier and Junge (see the monographs \cite{Junge-Habilitationschrift,Pisier-Asterisque,Effros-Ruan-book,Pisier-Operator-Space-Theory,BlecherLeMerdy2004} and the references therein).
Due to the nature of the definition of an operator space, for any aspect of Banach space theory it makes sense to investigate a noncommutative counterpart in the operator space world.
The linear and operator algebraic aspects have already been thoroughly developed, as one can see by taking a look at the aforementioned references, and the present paper aims to take a step in the nonlinear direction.
An interesting example of nonlinear maps which has been extensively studied in Banach spaces is the polynomials, see e.g. the surveys \cite{Gutierrez-Jaramillo-Llavona,Zalduendo}. Polynomials in operator spaces have already been considered, although the literature in this direction is limited: the main references are \cite{Dineen-Radu-polynomials,Defant-Wiesner}. We point out that \cite{Defant-Wiesner} illustrates very well a common theme when developing operator space versions of Banach space notions: in the noncommutative world there may be several different natural notions corresponding to a single classical one.
Once one has considered polynomials, a next obvious step is to look at holomorphic mappings.
The main goal of the present paper is precisely to initiate the study of  bounded holomorphic mappings in the context of operator spaces.

There is some precedent for considering holomorphic mappings on operator spaces. In \cite{Wang-Wong}, their approach is to consider the Taylor series at $0$ and require that the polynomials appearing in it are completely bounded in the sense of \cite{Dineen-Radu-polynomials}. While this notion was enough for the purposes of that paper, it does not appear well-suited for the study of \emph{spaces} of holomorphic mappings on operator spaces because there is no clear way to define even a norm based on that definition (let alone an operator space structure). This is of course a common issue with holomorphic mappings, to quote \cite{Carando-Lassalle-Zalduendo} ``spaces of holomorphic functions are seldom Banach''.
In this paper we propose an approach that successfully produces not only a Banach space, but in fact an operator space: a noncommutative version of the classical space $\mathcal{H}^\infty(B_X,Y)$ of bounded $Y$-valued holomorphic mappings defined on the unit ball of a Banach space $X$.

Our approach is to mimic the definition of a completely bounded linear map and look at amplifications: for operator spaces $V$ and $W$, we say that a holomorphic function $f : B_V \to W$ is \emph{completely bounded}
\footnote{
We warn the reader that the same terminology was used in \cite{Wang-Wong} for a different notion. Note, however, that the holomorphic functions considered in \cite{Wang-Wong} are not even necessarily bounded.
}
if its amplifications are uniformly bounded on the unit balls $B_{M_n(V)}$ of the spaces of $V$-valued matrices (note that this in fact reduces to the notion of a completely bounded linear mapping when $f$ is the restriction of a linear mapping $V \to W$). We denote this space by $\Hcb(B_V,W)$ and show that it is not only  a Banach space  but  in fact has a natural operator space structure (Lemma \ref{lemma-Hcb-Banach}), and moreover it is a completely contractive Banach algebra (Proposition \ref{proposition-Hinfty-is-algebra}).
Completely bounded holomorphic functions turn out to be well-suited for operator space theory: just as complex Banach spaces with biholomorphic unit balls must be isomorphic \cite{Kaup-Upmeier}, if the biholomorphism is completely bounded in both directions then the operator spaces have to be completely isometrically isomorphic (Proposition 3.14).  
Moreover, our notion of completely bounded holomorphic functions encompasses previously considered classes of polynomials on operator spaces from \cite{Dineen-Radu-polynomials,Defant-Wiesner}.

We remark that while our approach here is inspired by the completely bounded linear maps, the idea of applying a function (and particularly a holomorphic one) to the elements of a matrix is of course not new. See \cite[Sec. 6.3]{Horn-Johnson} for more on this subject in the case of (finite) matrices with complex entries, where it is pointed out that this idea arises naturally in the study of discretizations of integral operators, integral equations, and integral quadratic forms.
There is also an extensive literature on maps (even between $C^*$-algebras) whose amplifications preserve positivity or are even monotone, see \cite{MR4378092,MR4440677} for two recent examples.
Amplifications of nonlinear functions on operator spaces, from a more metric point of view, have recently been considered in \cite{Braga-CD,Braga-CD-Sinclair,Braga2021OpSp,Braga-Oikhberg,braga2024small}.

Our main results are that the space $\Hcb(B_V,\C)$ has an operator space predual $\Gcb(B_V)$ which satisfies operator space analogues of two important properties of its classical counterpart $\mathcal{G}^\infty(B_X)$: a universal linearization property and the transference of approximation properties.

We present two alternative ways of constructing the space $\Gcb(B_V)$. In Section \ref{Sect:Dixmier-Ng-Kaijser} we use the abstract machinery of the Dixmier-Ng-Kaijser theorem that characterizes when a Banach space is a dual space, in conjunction with a theorem of Le Merdy that identifies when an operator space which is a dual Banach space is in fact a dual operator space.
Later, we give a more explicit description of the operator space structure of $\Gcb(B_V)$ by identifying its closed matrix unit ball as the closure of the absolutely matrix convex hull of a natural set of evaluations (Proposition \ref{proposition-ball-is-convex-hull-of-image-of-delta}).

The linearization property of $\mathcal{G}^\infty(B_X)$ is the following \cite[Thm. 2.1]{Mujica-linearization} (see also the survey \cite{survey-free-objects}): there exists a bounded holomorphic mapping $\delta_X : B_X \to \mathcal{G}^\infty(B_X)$ such that for any Banach space $Y$ and any bounded holomorphic function $f : B_X \to Y$ there exists a unique bounded linear map $\widehat{f} : \mathcal{G}^\infty(B_X) \to Y$ making the following diagram commutative:
\[
\xymatrix{\mathcal{G}^\infty_{}(B_X)\ar@{-->}[rd]^{\widehat{f}}\\
     B_X\ar[r]^{f}\ar^{\delta_X}[u]& Y }
\]
We prove an analogous result for operator spaces (Theorem \ref{thm-linearization}), where the linear map $\widehat{f}$ is now completely bounded.
Let us point out that one can trivially make $\widehat{f}$ completely bounded, but in an unsatisfactory way.
If we have two operator spaces $V$ and $W$ and consider them as Banach spaces, for any bounded holomorphic mapping $f : B_V \to W$ we have the classical diagram
\[
\xymatrix{\mathcal{G}^\infty_{}(B_V)\ar@{-->}[rd]^{\widehat{f}}\\
     B_V\ar[r]^{f}\ar^{\delta_V}[u]& W }
\]
If now we endow $\mathcal{G}^\infty_{}(B_V)$ with its maximal operator space structure, then the mapping $\widehat{f}$ will be automatically completely bounded.
This approach, however, completely neglects to take into account the operator space structure in $W$ or $V$.
Our notions do precisely that.

As for the transference of approximation properties,  the space $\mathcal{G}^\infty(B_X)$ has the metric approximation property (resp. the approximation property) if and only if $X$ also does \cite[Prop. 5.7 and Thm. 5.4]{Mujica-linearization}. Analogously, we prove that $\Gcb(B_V)$ has the completely metric approximation property (resp. operator approximation property) if and only if $V$ does (Theorems \ref{thm-CMAP-transfers} and \ref{thm-OAP-transfers}).
The overall strategy of the proofs is similar to that of \cite{Mujica-linearization}, but the details are not simply a straightforward translation to the operator space setting.
The classical arguments in \cite{Mujica-linearization} crucially depend on endowing a space of bounded holomorphic functions with the topology of uniform convergence on compact subsets of the unit ball of a Banach space. The corresponding topology that we consider in the operator space setting (see Definition  \ref{defn-tau_0}) is inspired by Webster's approach to the Operator Approximation Property \cite{Webster}.

\section{Notation and preliminaries}\label{sec:prelim}

All Banach/operator spaces in this paper will be over the complex numbers.
The open unit ball of a Banach space $X$ will be denoted by $B_X$,
and the closed one by $\overline{B}_X$.
The dual of a Banach space $X$ is denoted $X'$.

\subsection{Polynomials and holomorphic functions}
We recall here some notation and basic facts about polynomials and holomorphic functions in Banach spaces.

Given Banach spaces $X$ and $Y$, a function $P:X\to Y$ is a {\em (continuous) $m$-homogeneous polynomial} if there exists a unique continuous symmetric $m$-linear mapping $\widecheck{P}:X\times\cdots\times X\to Y$ such that $P(x) = \widecheck{P}(x,\dots, x)$. We denote by $\mathcal P(^mX,Y)$ the set of all continuous $m$-homogeneous polynomials from $X$ to $Y$, which is a Banach space with the norm $\|P\|=\sup_{x\in B_X}\|P(x)\|$.

An $m$-homogeneous polynomial $P:X\to Y$ is said to be {\em of finite type} if there are linear functionals $x_1', \dots, x_N'\in X'$ and vectors $y_1,\dots, y_N\in Y$
such that $P(x)=\sum_{k=1}^N x_k'(x)^m y_k$.

A polynomial defined on a finite dimensional space is always of finite type. Thus, if $P$ is a polynomial and $T$ is a finite rank linear mapping then $P\circ T$ is a finite type polynomial. Note, also, that the image of a finite type polynomial is included in a finite dimensional space.

If $U\subset X$ is an open set, a mapping $f:U\to Y$ is said to be {\em holomorphic} if for every $x_0\in U$ there exists a sequence $(P^{[m]}f(x_0))$, with each $P^{[m]}f(x_0)$ a continuous $m$-homogeneous polynomial, such that the series
\[
f(x)=\sum_{m=0}^\infty P^{[m]}f(x_0)(x-x_0)
\] converges uniformly in some neighborhood of $x_0$ contained in $U$.

Equivalently, for every $x_0\in U$, the function $f$ is \textsl{Fr\'{e}chet differentiable} at $x_0$;  that is,
there exists a continuous linear mapping $df(x_0)$ from $X$ into $Y$ called the differential of $f$ at $x_0$, such that
$$
\lim_{h\to 0} \frac{f(x_0 +h)-f(x_0) -df(x_0)(h)}{\|h\|}=0.
$$

The set
$\mathcal H^\infty(B_X,Y)=\{f:B_X\to Y:\, f \textrm{ is holomorphic and bounded}\}$ is a Banach space if we endow it with the norm $\|f\|=\sup_{x\in B_X} \|f(x)\|$. For the case $Y=\mathbb C$, as usual, we  write $\mathcal H^\infty(B_X)$ instead of $\mathcal H^\infty(B_X, \mathbb C)$.

\subsection{Operator spaces}
We assume familiarity with the basic theory of operator spaces. The books \cite{Pisier-Operator-Space-Theory} and \cite{Effros-Ruan-book} are excellent references on this topic.

A linear map $Q : V \to W$ between operator spaces is called a \textit{complete 1-quotient}  if it is onto and the associated map from $V/\ker(Q)$ to $W$ is a completely isometric isomorphism.
In \cite[Sec. 2.4]{Pisier-Operator-Space-Theory}, these maps are referred to as complete metric surjections. 

 It is well known that for a given Banach space $X$, among all the  possible operator space structures which are compatible with the norm of $X$ there are a smallest one and a largest one, denoted $\MIN(X)$ and $\MAX(X)$ respectively (see \cite[Chap. 3]{Pisier-Operator-Space-Theory}).
 To be precise, $\MIN(X)$ (resp. $\MAX(X)$) is the unique operator structure on $X$ compatible with the norm of $X$ and such that for any operator space $W$ and any linear map $u : W \to X$ (resp. $v : X \to W$) we have
 $\|u : W \to \MIN(X)\|_{\cb} = \| u : W \to X \|$
 (resp. $\|v : \MAX(X) \to W\|_{\cb} = \| v : X \to W \|$).

We denote by $\mathcal{K}$ the space of compact operators on $\ell_2$, with its canonical operator space structure inherited from $\mathcal{B}(\ell_2)$. When $V$ is an operator space, $\mathcal{K}(V)$ denotes the operator space minimal tensor product of $\mathcal{K}$ and $V$, which we often understand as a space of infinite matrices with entries in $V$. 

An $n$-homogeneous polynomial $P:V\to W$ belongs to the Schur class $\mathcal P_s(^nV,W)$ \cite{Defant-Wiesner} if 
\[
\sup \left\{ \n{\big( P(x_{ij})\big)}_{M_m(W)} \;:\; m\in\N, (x_{ij}) \in B_{M_m(V)} \right\} <\infty,
\]
and this quantity is said to be the Schur norm of $P$. The identification
\[
M_m(\mathcal P_s(^nV,W))=\mathcal P_s(^nV,M_m(W))
\] gives $\mathcal P_s(^nV,W)$ the structure of an operator space.

\subsection{Matrix convexity}

The contents of this section are mainly from \cite{Effros-Webster}.
A \emph{matrix set $\mathbf{K} = (K_n)_n$ over a set $V$} is a sequence of subsets $K_n \subseteq M_n(V)$ for each $n\in\N$.
Given two matrix sets $\mathbf{K} = (K_n)_n$ and $\mathbf{L} = (L_n)_n$ over the same set $V$, we write $\mathbf{K} \subseteq \mathbf{L}$ when $K_n \subseteq L_n$ for all $n\in\N$.
When $V$ is an operator space, we say that  $\mathbf{K}$ is open (resp. closed) whenever each $K_n$ is open (resp. closed). By the closure of a matrix set $\mathbf{K} = (K_n)_n$ we mean taking the closure on each level: $\overline{\mathbf{K}} = ( \overline{K_n})_n$.
Note that a matrix set $\mathbf{K}$ over an operator space $V$ can be associated with a subset of $\K(V)$: using the standard identification of $M_n(V)$ with a subset of $\K(V)$ (filling up with zeros), each $K_n$ can be identified with a subset of $\K(V)$, and then we take the union of these. When we want to consider the closure in $\K(V)$ of the subset of $\K(V)$ which is associated with $\mathbf{K}$, we write $\overline{\mathbf{K}}^{\K(V)}$.

We say that a matrix set $\mathbf{K} = (K_n)$ over $V$ is \emph{absolutely matrix convex} if:
\begin{enumerate}[(i)]
	\item For all $x \in K_n$ and $y \in K_m$, $x \oplus y \in K_{m+n}$.
	\item For all $x \in K_n$, $\alpha \in M_{m,n}$, and $\beta \in M_{n,m}$ with $\n{\alpha}, \n{\beta} \le 1$,
	$\alpha x \beta \in K_m$.
\end{enumerate}

The main example of an absolutely matrix convex set of matrices over $V$ comes from the operator space structure: it follows immediately from Ruan's axioms that
both the \emph{open matrix unit ball of $V$} given by
$\mathbf{B}_V = \big( B_{M_n(V)} \big)_n$
and the 
\emph{closed matrix unit ball of $V$} given by
$\overline{\mathbf{B}}_V = \big( \overline{B}_{M_n(V)} \big)_n$
are absolutely matrix convex matrix sets over $V$.

Clearly, the intersection of a family of absolutely matrix convex matrix sets over $V$ (understood as intersecting on each matrix level) is again absolutely matrix convex. Therefore we can define the \emph{absolutely matrix convex hull} of a given matrix set $\mathbf{K}$ over $V$, denoted by $\amconv(\mathbf{K})$,
as the smallest absolutely matrix convex matrix set over $V$ that contains $\mathbf{K}$.
The following nice characterization of the absolutely matrix convex hull of a matrix set is due to B.E. Johnson \cite[Lemma 3.2]{Effros-Webster} (the slight restatement below is from \cite[Lemma 2.1]{CD-Oikhberg}).

\begin{lemma}\label{lemma-characterization-abs-mat-convex-hull}
	Given a matrix set $\mathbf{K}=(K_n)_n$ over $V$, the $n$-th level of $\amconv(\mathbf{K})$ is given by
	\begin{multline*}
	\bigg\{ \sum_{i=1}^m \alpha_i x_i \beta_i \; : \; x_i \in K_{k_i}, \alpha_i \in M_{n,k_i},\beta_i \in M_{k_i,n} \text{ such that } \bigg\| \sum_{i=1}^m \alpha_i\alpha_i^*  \bigg\| \le 1,  \bigg\|\sum_{i=1}^m \beta_i^*\beta_i \bigg\| \le 1  \bigg\}.
	\end{multline*}
\end{lemma}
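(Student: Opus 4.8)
The plan is a routine double-inclusion argument. Write $\mathbf{L} = (L_n)_n$ for the matrix set over $V$ whose $n$-th level $L_n$ is the set displayed in the statement. It suffices to establish three things: (a) $\mathbf{K} \subseteq \mathbf{L}$; (b) $\mathbf{L}$ is absolutely matrix convex; (c) $\mathbf{L} \subseteq \amconv(\mathbf{K})$. Indeed, (a) and (b) give $\amconv(\mathbf{K}) \subseteq \mathbf{L}$ by minimality of the absolutely matrix convex hull, while (c) gives the reverse inclusion. Part (a) is immediate: for $x \in K_n$ take a single summand with $k_1 = n$ and $\alpha_1 = \beta_1 = I_n$, so that $x = \alpha_1 x \beta_1$ and $\|\alpha_1\alpha_1^*\| = \|\beta_1^*\beta_1\| = 1$.

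For (b) I would verify the two axioms directly. To check (ii), let $z = \sum_i \alpha_i x_i \beta_i \in L_n$ with $\big\| \sum_i \alpha_i\alpha_i^* \big\| \le 1$ and $\big\| \sum_i \beta_i^*\beta_i \big\| \le 1$, and let $\alpha \in M_{p,n}$, $\beta \in M_{n,p}$ be contractions. Then $\alpha z \beta = \sum_i (\alpha\alpha_i)\, x_i\, (\beta_i\beta)$, and since $\sum_i (\alpha\alpha_i)(\alpha\alpha_i)^* = \alpha\big(\sum_i \alpha_i\alpha_i^*\big)\alpha^*$ has norm at most $\|\alpha\|^2 \le 1$, and symmetrically $\big\| \sum_i (\beta_i\beta)^*(\beta_i\beta) \big\| \le \|\beta\|^2 \le 1$, we get $\alpha z \beta \in L_p$. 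To check (i), given $x = \sum_i \alpha_i x_i \beta_i \in L_n$ and $y = \sum_j \gamma_j y_j \delta_j \in L_m$, I would pad the coefficient matrices with zero blocks, setting $\tilde\alpha_i = \left(\begin{smallmatrix} \alpha_i \\ 0 \end{smallmatrix}\right)$, $\tilde\beta_i = \left(\begin{smallmatrix} \beta_i & 0 \end{smallmatrix}\right)$, $\tilde\gamma_j = \left(\begin{smallmatrix} 0 \\ \gamma_j \end{smallmatrix}\right)$, $\tilde\delta_j = \left(\begin{smallmatrix} 0 & \delta_j \end{smallmatrix}\right)$, so that $x \oplus y = \sum_i \tilde\alpha_i x_i \tilde\beta_i + \sum_j \tilde\gamma_j y_j \tilde\delta_j$; then $\sum_i \tilde\alpha_i\tilde\alpha_i^* + \sum_j \tilde\gamma_j\tilde\gamma_j^*$ is block diagonal with diagonal blocks $\sum_i \alpha_i\alpha_i^*$ and $\sum_j \gamma_j\gamma_j^*$, hence of norm $\le 1$, and the same holds for the $\beta,\delta$ side, so $x \oplus y \in L_{n+m}$.

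For (c), take $z = \sum_{i=1}^m \alpha_i x_i \beta_i \in L_n$ and put $k = k_1 + \cdots + k_m$. Iterating axiom (i) on the elements $x_i \in K_{k_i} \subseteq \amconv(\mathbf{K})_{k_i}$ shows that the block-diagonal matrix $x := x_1 \oplus \cdots \oplus x_m$ belongs to $\amconv(\mathbf{K})_k$. Now let $\alpha = \left(\begin{smallmatrix} \alpha_1 & \cdots & \alpha_m \end{smallmatrix}\right) \in M_{n,k}$ be the row formed by the blocks $\alpha_i$ and $\beta = \left(\begin{smallmatrix} \beta_1 \\ \vdots \\ \beta_m \end{smallmatrix}\right) \in M_{k,n}$ the column formed by the blocks $\beta_i$. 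One computes $\alpha x \beta = \sum_i \alpha_i x_i \beta_i = z$, while the $C^*$-identity gives $\|\alpha\|^2 = \|\alpha\alpha^*\| = \big\| \sum_i \alpha_i\alpha_i^* \big\| \le 1$ and $\|\beta\|^2 = \|\beta^*\beta\| = \big\| \sum_i \beta_i^*\beta_i \big\| \le 1$. Applying axiom (ii) to $x \in \amconv(\mathbf{K})_k$ with the contractions $\alpha,\beta$ then yields $z = \alpha x \beta \in \amconv(\mathbf{K})_n$, which is (c).

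I do not expect a serious obstacle here: the argument is elementary, and the only point requiring genuine care is the passage between the two equivalent ways of bounding the ``compression'' matrices — the constraints $\big\| \sum_i \alpha_i\alpha_i^* \big\| \le 1$ and $\big\| \sum_i \beta_i^*\beta_i \big\| \le 1$ appearing in the displayed set, versus the contractivity $\|\alpha\|, \|\beta\| \le 1$ demanded in the definition of absolute matrix convexity. This is mediated throughout by the identities $\|\alpha\| = \|\alpha\alpha^*\|^{1/2}$ and $\|\beta\| = \|\beta^*\beta\|^{1/2}$, together with the bookkeeping of stacking the $\alpha_i$ into a single row and the $\beta_i$ into a single column (and, dually, padding with zeros for the direct-sum axiom).
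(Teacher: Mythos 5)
Your proof is correct. The paper does not actually prove this lemma---it cites it as a known result of B.E.~Johnson from \cite[Lemma 3.2]{Effros-Webster}---and your double-inclusion argument (showing the displayed set contains $\mathbf{K}$, is absolutely matrix convex via padding with zero blocks, and sits inside $\amconv(\mathbf{K})$ by assembling the $\alpha_i$ into a block row and the $\beta_i$ into a block column and invoking the $C^*$-identity) is precisely the standard proof of that result.
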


We now state a matrix analogue of the fact that taking the convex hull of a set cannot increase the supremum of the norm of the vectors in the set.
For a matrix set $\mathbf{K}=(K_n)_n$ over an operator space $V$, we will denote
$\n{\mathbf{K}} = \sup\{ \n{x}_{M_n(V)} \;:\; n\in\N, x \in K_n \}$.
This is surely well-known, but we were unable to find a reference and have included the proof for completeness.

\begin{lemma}\label{lemma-norm-of-abs-mat-conv-hull}
For a matrix set $\mathbf{K}=(K_n)_n$ over an operator space $V$, $\n{\mathbf{K}} = \n{\amconv(\mathbf{K})}$.
\end{lemma}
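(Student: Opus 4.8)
The inequality $\n{\mathbf{K}} \le \n{\amconv(\mathbf{K})}$ is immediate since $\mathbf{K} \subseteq \amconv(\mathbf{K})$. The content is the reverse inequality $\n{\amconv(\mathbf{K})} \le \n{\mathbf{K}}$, and for this I would use the explicit description of $\amconv(\mathbf{K})$ from Lemma \ref{lemma-characterization-abs-mat-convex-hull}. So let $z$ be an arbitrary element of the $n$-th level of $\amconv(\mathbf{K})$, written as $z = \sum_{i=1}^m \alpha_i x_i \beta_i$ with $x_i \in K_{k_i}$, $\alpha_i \in M_{n,k_i}$, $\beta_i \in M_{k_i,n}$, and $\bign{\sum_{i=1}^m \alpha_i \alpha_i^*} \le 1$, $\bign{\sum_{i=1}^m \beta_i^* \beta_i} \le 1$. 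The goal is to show $\n{z}_{M_n(V)} \le \n{\mathbf{K}}$.

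**Key steps.** First I would set $c = \n{\mathbf{K}}$, so that $x_i \in \overline{B}_{M_{k_i}(V)}(c) := c \cdot \overline{B}_{M_{k_i}(V)}$ for each $i$ (if $c = \infty$ there is nothing to prove, so assume $c < \infty$). Now assemble the pieces: let $\alpha = [\alpha_1 \ \cdots \ \alpha_m] \in M_{n, \sum k_i}$ be the row concatenation, $\beta = [\beta_1 ; \cdots ; \beta_m] \in M_{\sum k_i, n}$ the column concatenation, and $x = x_1 \oplus \cdots \oplus x_m \in M_{\sum k_i}(V)$ the block-diagonal sum. Then $z = \alpha x \beta$. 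The hypotheses say exactly that $\n{\alpha} = \bign{\sum \alpha_i \alpha_i^*}^{1/2} \le 1$ and $\n{\beta} = \bign{\sum \beta_i^* \beta_i}^{1/2} \le 1$. By Ruan's axiom for the operator space norm on $M_n(V)$ (the $\alpha x \beta$ estimate), $\n{z}_{M_n(V)} = \n{\alpha x \beta}_{M_n(V)} \le \n{\alpha} \cdot \n{x}_{M_{\sum k_i}(V)} \cdot \n{\beta}$. Finally, $\n{x}_{M_{\sum k_i}(V)} = \max_i \n{x_i}_{M_{k_i}(V)} \le c$ since the norm of a block-diagonal matrix over an operator space is the max of the norms of the blocks (this is a standard consequence of Ruan's axioms: $\le$ by the direct-sum axiom, $\ge$ by compressing with the coordinate projections). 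Combining, $\n{z}_{M_n(V)} \le c = \n{\mathbf{K}}$, and taking the supremum over $z$ and $n$ gives $\n{\amconv(\mathbf{K})} \le \n{\mathbf{K}}$.

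**Main obstacle.** There is no real obstacle here — the argument is a routine bookkeeping application of Lemma \ref{lemma-characterization-abs-mat-convex-hull} together with the two basic structural facts about operator space norms (the $\alpha x \beta$ contraction and the block-diagonal formula). The only point requiring a small amount of care is the identification of $\n{\alpha}$ with $\bign{\sum \alpha_i \alpha_i^*}^{1/2}$ and similarly for $\beta$: this is just the statement that the operator norm of a row (resp. column) block matrix is computed by that expression, which follows from $\alpha \alpha^* = \sum_i \alpha_i \alpha_i^*$ (resp. $\beta^* \beta = \sum_i \beta_i^* \beta_i$). One should also note that the supremum defining $\n{\mathbf{K}}$ could a priori be infinite, in which case the equality holds trivially, so the reduction to the finite case at the start is harmless.
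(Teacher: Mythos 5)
Your proof is correct, but it takes a more computational route than the paper. The paper's argument is a one-liner: since $\mathbf{K} \subseteq \n{\mathbf{K}}\,\overline{\mathbf{B}}_V$ and the scaled closed matrix unit ball $\n{\mathbf{K}}\,\overline{\mathbf{B}}_V$ is itself absolutely matrix convex (an immediate consequence of Ruan's axioms, as noted in the preliminaries), the minimality of the absolutely matrix convex hull gives $\amconv(\mathbf{K}) \subseteq \n{\mathbf{K}}\,\overline{\mathbf{B}}_V$ directly. You instead invoke Johnson's explicit description of the hull (Lemma \ref{lemma-characterization-abs-mat-convex-hull}) and verify by hand, via the row/column/block-diagonal assembly $z = \alpha x \beta$ and the estimates $\n{\alpha x\beta}\le\n{\alpha}\n{x}\n{\beta}$ and $\n{x_1\oplus\cdots\oplus x_m}=\max_i\n{x_i}$, that every element of the hull lands in the scaled ball. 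Your computation is exactly the proof that the scaled ball is absolutely matrix convex, unwound for a generic element of the hull; so the two arguments have the same mathematical content, but the paper's packaging (minimality of the hull plus the known convexity of the ball) avoids needing the explicit characterization at all, while yours has the minor virtue of being self-contained modulo Ruan's axioms. Your handling of the degenerate case $\n{\mathbf{K}}=\infty$ and the identification $\n{\alpha}=\bn{\sum_i\alpha_i\alpha_i^*}^{1/2}$ are both fine.
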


\begin{proof}
Since $\mathbf{K} \subseteq \amconv(\mathbf{K})$, the $\le$ inequality is obvious.
By definition of $\n{\mathbf{K}}$, we clearly have $\mathbf{K} \subseteq \n{\mathbf{K}} \overline{\mathbf{B}}_V$.
Taking the absolutely matrix convex hull,
$\amconv(\mathbf{K}) \subseteq \n{\mathbf{K}} \overline{\mathbf{B}}_V$, finishing the proof.
\end{proof}

The following is a version of the Hahn-Banach theorem for matrix convexity \cite[Thm. 2.3]{CD-Oikhberg} (essentially proved in  \cite[Prop. 4.1]{Effros-Webster}).

\begin{theorem}\label{thm-Hahn-Banach-matrix-convexity}
	Let $\mathbf{K} = (K_n)_n$ be a closed absolutely convex set matrix set over $V$ and let $x_0 \in M_n(V) \setminus K_n$ for some $n\in\N$.
	Then there exists $f \in M_n(V')$ such that for all $m\in\N$ and all $x \in K_m$,
	$$
	\n{ \mpair{f}{x} }_{M_{mn}} \le 1 \quad \text{but} \quad \n{ \mpair{f}{x_0} }_{M_{n^2}} > 1,
	$$ 
	where
	$$
	\mpair{(f_{ij})_{ij}}{(x_{kl})_{kl}} = \big( f_{ij}(x_{kl}) \big)_{ijkl}.
	$$
\end{theorem}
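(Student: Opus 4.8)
The plan is to reduce the matricial separation to classical operator–space duality. The ``if'' direction is trivial (evaluate the stated bound at $m=n$), so the task is to manufacture $f$. It is cleanest to use the completely isometric identification $M_n(V')=\CB(V,M_n)$: a matrix $f=(f_{ij})\in M_n(V')$ corresponds to the linear map $\Phi_f\colon V\to M_n$, $v\mapsto (f_{ij}(v))$, and for $x=(x_{kl})\in M_m(V)$ the element $\mpair{f}{x}=(f_{ij}(x_{kl}))$ coincides, after a permutation of indices by a unitary, with the $m$-th amplification $(\Phi_f)_m(x)\in M_m(M_n)=M_{mn}$. Hence what must be produced is a linear map $\Phi\colon V\to M_n$ whose amplifications send each $K_m$ into $\overline{B}_{M_{mn}}$ --- equivalently $\Phi(\mathbf{K})\subseteq\overline{\mathbf{B}}_{M_n}$ as matrix sets --- while $\n{\Phi_n(x_0)}_{M_{n^2}}>1$.

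Next I would reduce to the case in which $\mathbf{K}$ contains a fixed positive multiple of $\overline{\mathbf{B}}_V$. This is done by a fattening trick: for $\eps>0$ set $\mathbf{K}^\eps=\overline{\mathbf{K}+\eps\,\overline{\mathbf{B}}_V}$, taking Minkowski sums level by level and then closures. From the two axioms of absolute matrix convexity one checks directly that $\mathbf{K}+\eps\,\overline{\mathbf{B}}_V$, and hence its closure, is again a closed absolutely matrix convex matrix set, and by construction $\eps\,\overline{\mathbf{B}}_V\subseteq\mathbf{K}^\eps$. Since $K^\eps_n$ sits inside the $\eps$-neighbourhood of the closed set $K_n$, for $\eps<\operatorname{dist}(x_0,K_n)$ (which is positive, as $K_n$ is closed and $x_0\notin K_n$) one gets $x_0\notin K^\eps_n$. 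Because $\mathbf{K}\subseteq\mathbf{K}^\eps$, any $f$ that is bounded by $1$ on $\mathbf{K}^\eps$ and has $\n{\mpair{f}{x_0}}>1$ works for $\mathbf{K}$ as well; so it suffices to treat $\mathbf{K}^\eps$, which I now rename $\mathbf{K}$, assuming from now on that $r\,\overline{\mathbf{B}}_V\subseteq\mathbf{K}$ for some $r>0$.

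With this in hand, turn $\mathbf{K}$ into an operator-space ball. From the two axioms one verifies that each $K_m$ is convex and balanced, and that the Minkowski gauges $g_m\colon M_m(V)\to[0,\infty)$, $g_m(y)=\inf\{t>0:\,y\in tK_m\}$, are everywhere finite (being $\le r^{-1}\n{\cdot}$ since $r\,\overline{\mathbf{B}}_V\subseteq\mathbf{K}$) and satisfy Ruan's axioms, so that $(g_m)_m$ is an operator seminorm on $V$ with $\{y:\,g_m(y)\le1\}=K_m$ for every $m$ --- the last equality being exactly where closedness of $\mathbf{K}$ enters. Quotienting $V$ by the null space of $(g_m)_m$ and completing yields an operator space $E$ together with a completely bounded linear map $q\colon V\to E$ with $\n{q_m(y)}_{M_m(E)}=g_m(y)$ for all $m$ and $y\in M_m(V)$; in particular $q(\mathbf{K})\subseteq\overline{\mathbf{B}}_E$ and $\n{q_n(x_0)}_{M_n(E)}=g_n(x_0)>1$ since $x_0\notin K_n$. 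Now I invoke the standard fact (a consequence of $M_n(X^*)=\CB(X,M_n)$ and Smith's lemma) that the $M_n(E)$-norm is computed by testing against $M_n(E^*)$ at the \emph{same} matrix level $n$:
\[
\n{z}_{M_n(E)}=\sup\bigl\{\,\n{\mpair{\psi}{z}}_{M_{n^2}}:\,\psi\in\overline{B}_{M_n(E^*)}\,\bigr\}.
\]
Applied to $z=q_n(x_0)$ this produces $\psi\in\overline{B}_{M_n(E^*)}$, equivalently a complete contraction $\Psi\colon E\to M_n$, with $\n{\mpair{\psi}{q_n(x_0)}}_{M_{n^2}}>1$. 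Finally I take $f\in M_n(V')$ to be the matrix of functionals corresponding to $\Psi\circ q\colon V\to M_n$: for $x\in K_m$ one has $\mpair{f}{x}=\Psi_m(q_m(x))$ with $q_m(x)\in\overline{B}_{M_m(E)}$ and $\Psi$ completely contractive, so $\n{\mpair{f}{x}}_{M_{mn}}\le\n{q_m(x)}_{M_m(E)}\le1$, while $\n{\mpair{f}{x_0}}_{M_{n^2}}=\n{\mpair{\psi}{q_n(x_0)}}_{M_{n^2}}>1$.

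The main obstacle is the bookkeeping in the third step: checking that the gauges $g_m$ genuinely obey Ruan's axioms --- this uses both axioms of absolute matrix convexity, together with closedness --- so that $E$ is a bona fide operator space, and recognising that the fattening reduction in the second step is precisely what is needed to sidestep a non-absorbing $\mathbf{K}$, whose gauge would take the value $+\infty$ and for which $E$ and $q$ would fail to be globally defined. One could instead argue by hand, separating $x_0$ from $K_n$ in the Banach space $M_n(V)$ by a scalar functional and then amplifying it using the matrix convexity at all levels, but the operator-seminorm route above is cleaner. (This is essentially the argument of Effros--Webster; compare \cite[Prop. 4.1]{Effros-Webster} and \cite[Thm. 2.3]{CD-Oikhberg}.)
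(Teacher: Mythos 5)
Your argument is correct. Note that the paper does not actually prove this statement: it is imported verbatim from \cite[Thm.~2.3]{CD-Oikhberg}, which in turn rests on \cite[Prop.~4.1]{Effros-Webster}, so there is no in-paper proof to compare against. What you have written is a complete and accurate rendering of exactly that standard argument: fatten $\mathbf{K}$ so that its Minkowski gauges are finite, check that the gauges form a matrix seminorm satisfying Ruan's axioms (this is where both axioms of absolute matrix convexity and the levelwise closedness are consumed, via $\{g_m\le 1\}=K_m$), pass to the quotient operator space $E$, and then use $M_n(E^{**})=\CB(E^*,M_n)$ together with Smith's lemma to witness $\n{q_n(x_0)}_{M_n(E)}>1$ by a single $\psi\in \overline{B}_{M_n(E^*)}$ --- the latter being the step that pins the separating functional at matrix level $n$ rather than at some uncontrolled level. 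The two genuinely delicate points (non-absorbing $\mathbf{K}$, and the reduction to level $n$) are both identified and handled correctly, so there is nothing to add.
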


To simplify some statements, whenever $\mathbf{K} = (K_n)_n$ is a matrix set we will write ``$x \in \mathbf{K}$'' as a shorthand for ``there exists $n\in\N$ such that $x\in K_n$''.

\section{Completely bounded holomorphic functions}

This section is devoted to present our object of study, the space of completely bounded holomorphic functions, and to give several of its basic properties as well as descriptions of the behaviour of its elements.

In the classical framework, bounded holomorphic functions defined on a open unit ball of a Banach space constitute a Banach space. Inspired by the definition of completely bounded linear mappings,  we propose, in our non commutative setting, a related notion in order to obtain an operator space of holomorphic functions.

\begin{definition}
Let $V$ and $W$ be operator spaces. A holomorphic function $f : B_V \to W$ is said to be a \emph{completely bounded holomorphic function} if
\[
\sup \left\{ \n{\big( f(x_{ij})\big)}_{M_m(W)} \;:\; m\in\N, (x_{ij}) \in B_{M_m(V)} \right\} <\infty 
\]
The space of all such completely bounded holomorphic functions is denoted by $\Hcb(B_V,W)$, and the supremum above is denoted by $\n{f}_{\Hcb(B_V,W)}$ or simply $\n{f}_{\Hcb}$. In the case $W=\C$, we write $\Hcb(B_V)$.
\end{definition}

Note that a more concise way of writing the norm of a completely bounded holomorphic function $f : B_V \to W$ is
\[
\n{f}_{\Hcb(B_V,W)} = \sup\big\{ \n{f_{n_x}(x)}_{M_{n_x}(W)} \;:\; x \in \mathbf{B}_V \big\},
\]
where for each $x \in \mathbf{B}_V$ the number $n_x\in\N$ is the matrix level to which $x$ belongs (that is, $x \in M_{n_x}(V)$).
Therefore, the map $\Phi_W : \Hcb(B_V,W) \to \ell_\infty\{ M_{n_x}(W) \,:\, x \in \mathbf{B}_V \}$  given by $\Phi_W(f) = \big( f_{n_x}(x) \big)_{x\in \mathbf{B}_V}$ is an isometry.
Moreover, since $\ell_\infty\{ M_{n_x}(W) \,:\, x \in \mathbf{B}_V \}$ has a natural operator space structure, identifying $\Hcb(B_V,W)$ with its image under $\Phi_W$ we can define norms for matrices of elements of $\Hcb(B_V,W)$. Specifically, for $(f_{k\ell}) \in M_n( \Hcb(B_V,W) )$ we have that
\begin{equation}\label{eqn-oss-for-Hcb}
\n{ (f_{k\ell}) }_{M_n(\Hcb(B_V,W))} = 
\sup \left\{ \n{\big( f_{k\ell}(x_{ij})\big)}_{M_{mn}(W)} \;:\; m\in\N, (x_{ij}) \in B_{M_m(V)} \right\}.
\end{equation}

Now we see that with this structure, $\Hcb(B_V,W)$ is an operator space.

\begin{lemma}\label{lemma-Hcb-Banach}
For any operator spaces $V$ and $W$, the space
$\Hcb(B_V,W)$ is an operator space.    
\end{lemma}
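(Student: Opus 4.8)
The plan is to read off the operator space structure of $\Hcb(B_V,W)$ from the isometry $\Phi_W : \Hcb(B_V,W) \to \ell_\infty\{ M_{n_x}(W) : x \in \mathbf{B}_V \}$ introduced just above the statement. By the definition of the matrix norms in \eqref{eqn-oss-for-Hcb}, the amplification $(\Phi_W)_n$ sends a matrix $(f_{k\ell}) \in M_n(\Hcb(B_V,W))$ to the family whose $x$-coordinate is $\big(f_{k\ell}(x_{ij})\big)_{k,\ell,i,j} \in M_n\big(M_{n_x}(W)\big) = M_{n\,n_x}(W)$, and the supremum of the norms of these over $x \in \mathbf{B}_V$ is exactly the right-hand side of \eqref{eqn-oss-for-Hcb}; so, after identifying $M_n\big(\ell_\infty\{M_{n_x}(W)\}\big)$ with $\ell_\infty\{M_{n\,n_x}(W)\}$ in the canonical completely isometric way, $\Phi_W$ is a complete isometry onto its image. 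Since an $\ell_\infty$-direct sum of operator spaces is an operator space and Ruan's axioms pass to linear subspaces, it only remains to verify that the range of $\Phi_W$ is closed, i.e.\ that $\Hcb(B_V,W)$ is complete.

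To that end I would take a sequence $(f_j)_j$ in $\Hcb(B_V,W)$ with $\Phi_W(f_j) \to g = (g^x)_{x\in\mathbf{B}_V}$ in the sup norm and produce $\widetilde f \in \Hcb(B_V,W)$ with $\Phi_W(\widetilde f) = g$. Looking only at the coordinates with $n_x = 1$ shows that $(f_j)_j$ converges uniformly on $B_V$ to the function $\widetilde f : B_V \to W$, $\widetilde f(x) = g^x$; since a locally uniform limit of $W$-valued holomorphic functions is holomorphic, $\widetilde f$ is holomorphic, and it is bounded because $\sup_{x\in B_V}\|\widetilde f(x)\| \le \n{g} < \infty$. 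To see $\Phi_W(\widetilde f) = g$, fix $m\in\N$ and $x = (x_{ij}) \in B_{M_m(V)}$: each entry $x_{ij}$ equals $\alpha x \beta$ for suitable contractions $\alpha \in M_{1,m}$, $\beta \in M_{m,1}$, so $x_{ij} \in B_V$ and hence $f_j(x_{ij}) \to \widetilde f(x_{ij})$ for all $i,j$; therefore $(f_j)_m(x) = \big(f_j(x_{ij})\big)_{ij} \to \big(\widetilde f(x_{ij})\big)_{ij} = (\widetilde f)_m(x)$ in $M_m(W)$ (entrywise convergence of a fixed-size matrix is norm convergence). But $(f_j)_m(x)$ is the $x$-coordinate of $\Phi_W(f_j)$, which tends to $g^x$, so $(\widetilde f)_m(x) = g^x$. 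Thus $\widetilde f \in \Hcb(B_V,W)$ with $\Phi_W(\widetilde f) = g$, the range of $\Phi_W$ is closed, and $\Hcb(B_V,W)$ is complete — in fact $\n{\widetilde f}_{\Hcb} = \n{g}$.

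Apart from the routine bookkeeping with matrix amplifications and $\ell_\infty$-sums, the one genuine input is the classical fact that a locally uniform limit of Banach-space-valued holomorphic functions is holomorphic; combined with the elementary observation that extracting a matrix entry is a contraction, so that amplifications pass to limits, this is what identifies the candidate limit function. I expect that identification of the limit, rather than the operator-space formalities, to be the step that requires a little care.
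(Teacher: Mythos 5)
Your proposal is correct and follows essentially the same route as the paper: both reduce the matter to completeness, obtain the candidate limit from the level-one (uniform, resp.\ pointwise) convergence together with the classical fact that limits of holomorphic functions are holomorphic, and then verify membership in $\Hcb(B_V,W)$ by passing to entrywise limits at each matrix level. Your closed-range-of-$\Phi_W$ packaging even makes explicit the ``standard argument'' for norm convergence that the paper leaves to the reader.
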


\begin{proof}
It is clear that $\n{\cdot}_{M_n(\Hcb(B_V,W))}$ are indeed norms satisfying Ruan's conditions, so we only need to show completeness. Let $(f^{[n]})$ be a Cauchy sequence in $\Hcb(B_V,W)$, in particular it is bounded: there exists $C \ge 0$ such that for each $n\in\N$ we have $\n{f^{[n]}}_{\Hcb} \le C$. 
Note that $(f^{[n]})$ is Cauchy in $\mathcal{H}^\infty(B_V,W)$, so it has a limit in there which is a holomorphic function $f : B_V \to W$ and in particular $(f^{[n]})$ converges to $f$ pointwise.
For each fixed $x \in \mathbf{B}_V$, the sequence $(f^{[n]}_{n_x}(x))$ is bounded by $C$ in $M_{n_x}(W)$ and converges pointwise to $f_{n_x}(x)$, hence $\n{f_{n_x}(x)}_{M_{n_x}(W)} \le C$. We conclude that $f \in \Hcb(B_V,W)$ and a standard argument shows that $(f^{[n]})$ converges to $f$ in $\Hcb(B_V,W)$.
\end{proof}

Note that our proposed definition forces the functions in $\Hcb(B_V,W)$ to be null at 0:

\begin{remark}\label{remark-0-at-0}
Let $V$ and $W$ be operator spaces.
If  $f \in \Hcb(B_V,W)$, then $f(0)=0$.
Indeed, suppose on the contrary that $f(0)=w_0\not=0$. Since the matrix of all $0$'s is in $B_{M_m(V)}$ and the norm of the matrix of all $w_0$'s goes to infinity, $f$ is not completely bounded.
\end{remark}

We now present our first example of completely bounded holomorphic functions:

\begin{remark}\label{remark-linear-cb-is-holomorphic-cb}
It is clear that if $T\in\CB(V,W)$, then its restriction to $B_V$ is a completely bounded holomorphic function. Furthermore, this restriction process yields a completely isometric embedding $\CB(V,W) \to \Hcb(B_V,W)$. 
Nonlinear completely bounded holomorphic functions do exist: for example, the next proposition implies that all finite type polynomials on $V$ (that take the value 0 at 0) belong to $\Hcb(B_V)$ and hence the same holds for approximable polynomials (i. e. those belonging to the closure, in the supremum norm, of finite type polynomials). 
\end{remark}

The space $\mathcal{H}^\infty(B_X)$ is clearly a Banach algebra.
We now show that in the operator space setting $\Hcb(B_V)$ not only is also a Banach algebra, but in fact is a quantized Banach algebra in the sense of \cite[Chap. 16]{Effros-Ruan-book}.
Recall that a \emph{completely contractive Banach algebra} $\mathcal A$ is an operator space which is an associative algebra such that the multiplication $\mathcal A\times \mathcal A\to\mathcal A$ is jointly completely contractive.
The proof of the proposition below is  related to that of \cite[Prop. 3.2]{braga2024small}.

\begin{proposition}\label{proposition-Hinfty-is-algebra}
For any operator space $V$, the space $\Hcb(B_V)$ is a completely contractive Banach algebra.
More generally, for any operator algebra $\mathcal{A} \subseteq \mathcal{B}(H)$ (with the induced operator space structure) the space $\Hcb(B_V,\mathcal{A})$ is a completely contractive Banach algebra.
\end{proposition}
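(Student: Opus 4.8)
The plan is to verify directly that the operator space structure on $\Hcb(B_V,\mathcal A)$ described in \eqref{eqn-oss-for-Hcb} interacts well with the pointwise product inherited from the operator algebra $\mathcal A$. First I would check that $\Hcb(B_V,\mathcal A)$ is closed under pointwise multiplication and is an associative algebra: if $f,g\in\Hcb(B_V,\mathcal A)$, then $fg$ is holomorphic (a product of holomorphic scalar-type functions, or more precisely one uses that multiplication $\mathcal A\times\mathcal A\to\mathcal A$ is a bounded bilinear, hence holomorphic, map and composes with $(f,g):B_V\to\mathcal A\times\mathcal A$), and associativity and the algebra axioms are inherited pointwise from $\mathcal A$. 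The scalar case $\mathcal A=\C$ is then the special case $\mathcal A=\mathcal B(\C)$.

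The core of the proof is the joint complete contractivity of the multiplication. Here I would exploit the fact that $\mathcal A\subseteq\mathcal B(H)$, so it suffices to work with the concrete product of operators. Fix $(f_{k\ell})\in M_p(\Hcb(B_V,\mathcal A))$ and $(g_{rs})\in M_q(\Hcb(B_V,\mathcal A))$; I must bound the norm of $(f_{k\ell}g_{rs})\in M_{pq}(\Hcb(B_V,\mathcal A))$ in terms of the product of the two norms. By \eqref{eqn-oss-for-Hcb}, this means: for every $m\in\N$ and every $(x_{ij})\in B_{M_m(V)}$, estimate $\n{(f_{k\ell}(x_{ij})g_{rs}(x_{ij}))}_{M_{pqm}(\mathcal B(H))}$. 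The key observation is that this matrix is the \emph{Schur (entrywise) product} of the two matrices $(f_{k\ell}(x_{ij}))_{k\ell,ij}$ and $(g_{rs}(x_{ij}))_{rs,ij}$, each of which has norm at most $\n{(f_{k\ell})}_{M_p(\Hcb)}$, resp. $\n{(g_{rs})}_{M_q(\Hcb)}$, by definition of those matrix norms. So everything reduces to the standard fact that in $\mathcal B(H)$ the Schur/entrywise product of operator matrices is controlled: writing each entry $f_{k\ell}(x_{ij})=a_{(k,i)}^*\,b_{(\ell,j)}$ and $g_{rs}(x_{ij})=c_{(r,i)}^*\,d_{(s,j)}$ via factorizations coming from the operator-matrix norms (i.e. $\n{(f_{k\ell}(x_{ij}))}=\inf\{\n{(a_\alpha)}\n{(b_\beta)}\}$ over such factorizations), one checks that $f_{k\ell}(x_{ij})g_{rs}(x_{ij})=(a_{(k,i)}\otimes c_{(r,i)})^*(b_{(\ell,j)}\otimes d_{(s,j)})$ inside $\mathcal B(H\otimes H)$, and then one uses a unitary identifying the relevant subspace of $H\otimes H$ with a subspace of $H$ (since $\mathcal B(H\otimes H)$ is $*$-isomorphic to $\mathcal B(H)$ — here one may enlarge $H$ if needed, which does not change the operator space $\mathcal A$ up to complete isometry, or one invokes the known Haagerup-type bound on entrywise products of operator matrices directly). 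This yields $\n{(f_{k\ell}(x_{ij})g_{rs}(x_{ij}))}\le \n{(f_{k\ell}(x_{ij}))}\cdot\n{(g_{rs}(x_{ij}))}\le \n{(f_{k\ell})}_{M_p(\Hcb)}\n{(g_{rs})}_{M_q(\Hcb)}$, and taking the supremum over $m$ and $(x_{ij})$ gives precisely joint complete contractivity.

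The main obstacle I anticipate is the bookkeeping in the factorization step: making sure the factorizations of the two operator matrices are taken over a common (or compatibly enlarged) Hilbert space, that the tensoring of the factor matrices $a,b,c,d$ reassembles correctly into a factorization of the entrywise-product matrix with the right index structure on $M_{pqm}$, and that the resulting norm bound is the product rather than something larger. This is the point where the argument of \cite[Prop. 3.2]{braga2024small} is relevant and where one should be careful; alternatively, one can phrase it cleanly as: the bilinear map $\mathcal B(H)\times\mathcal B(H)\to\mathcal B(H)$, $(S,T)\mapsto ST$, is completely contractive as a jointly completely bounded map, and composing a jointly completely contractive bilinear map with the pair $(\Phi_{\mathcal A}(f),\Phi_{\mathcal A}(g))$ of complete contractions into $\ell_\infty$-direct sums preserves the estimate; combined with Remark~\ref{remark-0-at-0} (which guarantees $fg$ vanishes at $0$, so it lies in $\Hcb$ and not merely in a larger space) this completes the proof. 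The remaining verifications — that $fg$ is genuinely holomorphic and that the norm is finite, hence $fg\in\Hcb(B_V,\mathcal A)$ — are routine given the uniform bound just obtained.
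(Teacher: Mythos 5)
Your overall strategy is the same as the paper's: reduce everything to the fact that the entrywise (Schur) product of operator matrices satisfies $\n{(A_{ij}B_{ij})}\le\n{(A_{ij})}\,\n{(B_{ij})}$ together with its joint complete boundedness, and your fallback option of ``invoking the known Haagerup-type bound on entrywise products of operator matrices directly'' is exactly what the paper does, citing Christensen's theorem that $\square$ is a completely bounded bilinear contraction and the general fact that $\cb$ implies $\jcb$ for bilinear maps. The routine parts (holomorphy of $fg$ via the bounded bilinear multiplication, associativity, $fg(0)=0$, the scalar case as $\mathcal A=\C$) are fine.

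However, the self-contained argument you offer for the key estimate is incorrect as written. If $f_{k\ell}(x_{ij})=a_{(k,i)}^*b_{(\ell,j)}$ and $g_{rs}(x_{ij})=c_{(r,i)}^*d_{(s,j)}$, then $(a_{(k,i)}\otimes c_{(r,i)})^*(b_{(\ell,j)}\otimes d_{(s,j)})=\big(a_{(k,i)}^*b_{(\ell,j)}\big)\otimes\big(c_{(r,i)}^*d_{(s,j)}\big)$, i.e.\ the \emph{tensor product} $f_{k\ell}(x_{ij})\otimes g_{rs}(x_{ij})$ acting on $H\otimes H$, not the operator product $f_{k\ell}(x_{ij})g_{rs}(x_{ij})$ in $\mathcal B(H)$. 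No unitary identification of a subspace of $H\otimes H$ with a subspace of $H$ converts $S\otimes T$ into $ST$ (the map $S\otimes T\mapsto ST$ is not even bounded on the spatial tensor product). The tensor trick works in the scalar Schur theorem only because there the product and the tensor product of entries coincide. If you want an elementary proof rather than a citation, the correct factorization is of row--column type: for $A,B\in M_N(\mathcal B(H))$ set $\widetilde A\in M_{N,N^2}(\mathcal B(H))$ with $\widetilde A_{i,(k,l)}=\delta_{ik}A_{kl}$ and $\widetilde B\in M_{N^2,N}(\mathcal B(H))$ with $\widetilde B_{(k,l),j}=\delta_{lj}B_{kl}$; then $A\square B=\widetilde A\widetilde B$, $\n{\widetilde A}=\max_i\bn{\sum_l A_{il}A_{il}^*}^{1/2}\le\n{A}$ and $\n{\widetilde B}=\max_j\bn{\sum_k B_{kj}^*B_{kj}}^{1/2}\le\n{B}$, which gives the bound and, applied at all matrix levels over the $p,q$ indices, the joint complete contractivity. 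One further small point: your alternative phrasing via the jcb amplification of the plain multiplication $\mathcal B(H)\times\mathcal B(H)\to\mathcal B(H)$ produces a matrix indexed by two independent copies of the $m$-index, so you still need to compress to the diagonal $i=i'$, $j=j'$ (a norm-nonincreasing operation) to recover the Schur product; this should be said explicitly.
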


\begin{proof}
To illustrate the idea of the proof, let us first just show that $\Hcb(B_V)$ is an algebra.
This is essentially an immediate consequence of a theorem of Schur \cite[Satz III]{Schur} (see also \cite{Ong}) which states that the norm of the Schur product of two matrices in $M_n$ (that is, their entrywise product) is less than or equal to the product of the individual norms.
Therefore, if $f,g \in \Hcb(B_V)$ and $x=(x_{ij})\in B_{M_n(V)}$,
\[
\n{\big( (fg)(x_{ij}) \big)}_{M_n} = \n{\big( f(x_{ij}) g(x_{ij}) \big)}_{M_n} \le    \n{\big( f(x_{ij}) \big)}_{M_n} \n{\big( g(x_{ij}) \big)}_{M_n} 
\le \n{f}_{\Hcb} \n{g}_{\Hcb},
\]
showing that $fg \in \Hcb(B_V)$ and $\n{fg}_{\Hcb} 
 \le \n{f}_{\Hcb} \n{g}_{\Hcb}$.

For the general operator-valued case (and the complete contractivity),
for $n\in\N$ we denote by $\square : M_n(\mathcal{B}(H)) \times M_n(\mathcal{B}(H)) \to M_n(\mathcal{B}(H))$ the Schur product, that is, for any $(A_{ij}),(B_{ij}) \in M_n(\mathcal{B}(H))$ define $(A_{ij}) \square (B_{ij}) = (A_{ij}B_{ij})$.
It is shown in  \cite[Thm. 2.3]{Christensen} that $\square$ is a completely bounded bilinear map of norm 1, and therefore it is jointly completely contractive \cite[Eqn. (9.1.11)]{Effros-Ruan-book}.
\footnote{
The reader should be warned that some older references such as \cite{Effros-Ruan-book} use the terminology \emph{completely bounded} (resp. \emph{multiplicatively bounded}) for the multilinear mappings that are nowadays often called \emph{jointly completely bounded} (resp. \emph{completely bounded}).
}
Hence, if $f=(f_{kl}) \in M_p(\Hcb(B_V,\mathcal{A}))$, $g=(g_{rs}) \in M_q(\Hcb(B_V,\mathcal{A}))$  and $x=(x_{ij})\in B_{M_n(V)}$
\begin{multline*}
\n{ \big( f_{kl}(x_{ij}) g_{rs}(x_{ij}) \big) }_{ M_{pqn(\mathcal{B}(H))} } = \n{ \Big( \big( f_{kl}(x_{ij}) \big) \square \big( g_{rs}(x_{ij}) \big) \Big) }_{ M_{pq}(M_n(\mathcal{B}(H)))} \\
\le \n{\square}_{\jcb} \n{\Big( \big( f_{kl}(x_{ij}) \big)}_{ M_{p}(M_n(\mathcal{B}(H)))} \n{ \big( g_{rs}(x_{ij}) \big)}_{ M_{q}(M_n(\mathcal{B}(H)))} \\
\le \n{ (f_{kl}) }_{M_p(\Hcb(B_V,\mathcal{A}))} \n{ (g_{rs}) }_{M_q(\Hcb(B_V,\mathcal{A}))},
\end{multline*}
which shows that
\[
 \n{ (f_{kl}g_{rs}) }_{M_{pq}(\Hcb(B_V,\mathcal{A}))} \le \n{ (f_{kl}) }_{M_p(\Hcb(B_V,\mathcal{A}))} \n{ (g_{rs}) }_{M_q(\Hcb(B_V,\mathcal{A}))},
\]
which is precisely what it means for $\Hcb(B_V,\mathcal{A})$ to be a completely contractive Banach algebra.
\end{proof}

Note that the Banach algebras from the preceding proposition are not unital, since constant functions do not belong to $\Hcb (B_V)$.

We now present examples of nonlinear completely bounded holomorphic functions, namely certain polynomials.

\begin{example}
Directly from our definition, the Schur polynomials $V \to W$ from \cite{Defant-Wiesner} are in $\Hcb(V,W)$.
Thus, it follows from \cite[Prop. 9.3]{Defant-Wiesner} that whenever $V$ is a maximal operator space we have $\mathcal{P}(^m V,W) \subset \Hcb(V,W)$.
Similarly, since the Kronecker polynomials from \cite{Defant-Wiesner} are Schur \cite[Cor. 5.3]{Defant-Wiesner}, it follows from the discussion before \cite[Thm. 7.1]{Defant-Wiesner} that the completely bounded polynomials from \cite{Dineen-Radu-polynomials} are also in $\Hcb(V,W)$. In particular, it follows from \cite[Prop. 4.1]{Dineen-Radu-polynomials} that if
 $(e_j)_{j=1}^\infty$ is an orthonormal basis for $OH$, $W$ is an operator space, and $(w_j)_{j=1}^\infty$ is a norm null sequence in $W$, then for any natural number $m \ge 2$ the polynomial $P : OH \to W$ given by
\[
P\left( \sum_{n=1}^\infty x_j e_j \right) = \sum_{n=1}^\infty x_j^m w_j.
\]
is in $\Hcb(OH,W)$. See also \cite{Alaminos-Extremera-Villena} for a discussion of certain completely bounded polynomials (in the sense of \cite{Dineen-Radu-polynomials}) on Fourier algebras of locally compact groups.
\end{example}

A non-polynomial example is given next.

\begin{proposition}\label{proposition-phi-over-1-minus-phi}
Let $V$ be an operator space.
For each $\varphi \in B_{V'}$, the function $f : B_V \to \C$ given by $f(x) = \varphi(x)/(1-\varphi(x))$ is in $\Hcb(B_V)$. Moreover, $\n{f}_{\Hcb} \le \n{\varphi}/(1-\n{\varphi})$.   
\end{proposition}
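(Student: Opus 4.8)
The plan is to exploit the fact that $f$ is given by a power series that can be summed using the geometric series, and that each term in the series is a finite-type polynomial whose Schur norm we can control. Write $f(x) = \sum_{m=1}^\infty \varphi(x)^m$, which converges uniformly on $\overline{r B_V}$ for any $r < 1$ since $|\varphi(x)| \le \n{\varphi} < 1$ on $B_V$, so $f$ is holomorphic on $B_V$. To estimate the completely bounded norm, fix $n \in \N$ and $x = (x_{ij}) \in B_{M_n(V)}$. Then $\big( f(x_{ij}) \big) = \sum_{m=1}^\infty \big( \varphi(x_{ij})^m \big)$, and I would bound the norm of each summand.

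The key observation is that the matrix $\big( \varphi(x_{ij})^m \big)_{ij} \in M_n$ is the $m$-fold Schur (entrywise) product of the scalar matrix $A := \big( \varphi(x_{ij}) \big)_{ij} \in M_n$ with itself. By the theorem of Schur on Schur products used in the proof of Proposition \ref{proposition-Hinfty-is-algebra}, $\n{\big( \varphi(x_{ij})^m \big)}_{M_n} \le \n{A}_{M_n}^m$. Now $A = \mpair{\varphi}{x}$ in the notation where $\varphi$ is viewed as a scalar (a $1\times 1$ matrix over $V'$), so $\n{A}_{M_n} \le \n{\varphi}_{V'} \n{x}_{M_n(V)} \le \n{\varphi}$ — this is just the fact that $\varphi \in \CB(V,\C)$ with $\n{\varphi}_{\cb} = \n{\varphi}$ since scalars form a minimal operator space, applied to the amplification $\varphi_n : M_n(V) \to M_n$. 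Summing the geometric series,
\[
\n{\big( f(x_{ij}) \big)}_{M_n} \le \sum_{m=1}^\infty \n{A}_{M_n}^m \le \sum_{m=1}^\infty \n{\varphi}^m = \frac{\n{\varphi}}{1-\n{\varphi}}.
\]
Taking the supremum over all $n$ and all $(x_{ij}) \in B_{M_n(V)}$ gives $f \in \Hcb(B_V)$ with $\n{f}_{\Hcb} \le \n{\varphi}/(1-\n{\varphi})$.

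I do not expect a serious obstacle here; the only points requiring minor care are justifying holomorphy of $f$ on all of $B_V$ (handled by uniform convergence of the geometric series on each ball of radius $r<1$, using that $\varphi$ is itself holomorphic and bounded by $\n{\varphi}$ in modulus) and the interchange of norm estimate with the infinite sum (justified by the triangle inequality applied to partial sums together with convergence of the majorizing geometric series, since $\n{\varphi} < 1$). The heart of the argument is simply iterating Schur's Schur-product inequality, exactly as in Proposition \ref{proposition-Hinfty-is-algebra}, combined with the elementary bound $\n{\varphi_n} \le \n{\varphi}$ on amplifications of a scalar functional.
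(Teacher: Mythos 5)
Your proposal is correct and is essentially the paper's argument: the paper also expands $f$ as the geometric series $\sum_m \varphi^m$ and bounds $\n{\varphi^m}_{\Hcb}\le\n{\varphi}^m$ via the submultiplicativity of $\Hcb(B_V)$ from Proposition \ref{proposition-Hinfty-is-algebra}, which is itself just the iterated Schur-product inequality you invoke directly. The only cosmetic difference is that the paper sums the series in the Banach space $\Hcb(B_V)$ and then identifies the limit pointwise, whereas you estimate matrix-by-matrix and take suprema; both are valid.
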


\begin{proof}
Let $\varphi \in V'$ with $\n{\varphi}=\n{\varphi}_{\cb} < 1$.
We know that $\varphi \in \Hcb(B_V)$, and by Proposition \ref{proposition-Hinfty-is-algebra} for each $n\in\N$ the function $\varphi^n$ is also in $\Hcb(B_V)$ with $\n{\varphi^n}_{\Hcb}  \le \n{\varphi}_{\cb}^n$.
Therefore $\sum_{n=1}^\infty \varphi^n$ converges in $\Hcb(B_V)$ to a function $f$.
Note that for each $x \in B_V$, the series $\sum_{n=1}^\infty \varphi(x)^n$ converges to $\varphi(x)/(1-\varphi(x))$, yielding the result. The norm estimate is immediate from the argument.
\end{proof}

In particular, functions of the form $z/(1-az)$ with $a\in\D$ belong to $\Hcb(\D)$. The next result gives even more examples of analytic functions in $\Hcb(\D)$.

\begin{proposition}\label{proposition-finite-Blaschke-products}
\begin{enumerate}[(a)] 
\item If $f \in \Hcb(\D)$ and $a\in \D$, then $g(z)=f(z)/(1-az)$ is also in $\Hcb(\D)$. Moreover, $\n{g}_{\Hcb} \le \n{f}_{\Hcb}/(1-|a|)$.
\item Any finite Blaschke product vanishing at $0$ belongs to $\Hcb(\D)$.
\item The set of functions in the Wiener algebra $\mathcal{W}(\D)$ which vanish at 0 is a subset of $\Hcb(\D)$.
\end{enumerate}    
\end{proposition}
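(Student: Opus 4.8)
The plan is to prove the three parts in order, with (a) being the workhorse from which (b) follows by induction and (c) follows by a density/limiting argument.

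For part (a), I would mimic the proof of Proposition \ref{proposition-phi-over-1-minus-phi}. Fix $a \in \D$ and note that the function $h(z) = z/(1-az)$ is in $\Hcb(\D)$ by the remark following Proposition \ref{proposition-phi-over-1-minus-phi} (or directly: $h = \sum_{n\ge 1} a^{n-1} z^n$ converges in $\Hcb(\D)$ with $\n{h}_{\Hcb} \le \sum_{n\ge1} |a|^{n-1} = 1/(1-|a|)$, using that $\n{z^n}_{\Hcb} \le 1$ by Proposition \ref{proposition-Hinfty-is-algebra} applied to the coordinate functional $z \in B_{\C'}$). The subtlety is that $g(z) = f(z)/(1-az)$ is \emph{not} literally $f \cdot h$, but rather $g(z) = f(z) + a\, f(z)\, h(z)$: indeed $f(z)/(1-az) = f(z)(1 + az/(1-az)) = f(z) + a f(z) h(z)$. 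Since $\Hcb(\D)$ is a completely contractive Banach algebra (Proposition \ref{proposition-Hinfty-is-algebra}), the product $f h \in \Hcb(\D)$ with $\n{fh}_{\Hcb} \le \n{f}_{\Hcb}\n{h}_{\Hcb}$, so $g \in \Hcb(\D)$ with
\[
\n{g}_{\Hcb} \le \n{f}_{\Hcb} + |a|\,\n{f}_{\Hcb}\,\frac{1}{1-|a|} = \n{f}_{\Hcb}\cdot\frac{1-|a|+|a|}{1-|a|} = \frac{\n{f}_{\Hcb}}{1-|a|}.
\]
Alternatively and more cleanly, one can observe that $\sum_{n\ge0} a^n z^n$ converges in $\mathcal{W}(\D)$, hence $f(z)/(1-az) = f(z)\sum_{n\ge 0}a^n z^n = \sum_{n\ge 0} a^n (z^n f(z))$ converges in $\Hcb(\D)$ (each summand has $\Hcb$-norm at most $|a|^n \n{f}_{\Hcb}$ since $z^n$ has $\Hcb$-norm $\le 1$), giving the same bound.

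For part (b), write a finite Blaschke product vanishing at $0$ as $B(z) = c\, z^k \prod_{j=1}^N \frac{z - a_j}{1 - \overline{a_j} z}$ with $|c|=1$, $k \ge 1$, and $a_j \in \D$ (possibly $k=1$ and $N=0$, i.e.\ $B(z)=cz$). Each factor $(z-a_j)/(1-\overline{a_j}z)$ equals $-a_j + (1-|a_j|^2)\, z/(1-\overline{a_j}z)$, which is \emph{bounded} on $\D$ but, having nonzero value $-a_j$ at $0$, is not itself in $\Hcb(\D)$; however, $z^k$ \emph{is} in $\Hcb(\D)$, and I would build up $B$ by starting with $z^k \in \Hcb(\D)$ (norm $\le 1$) and successively applying part (a): multiplying by $(z-a_j)$ keeps us in $\Hcb(\D)$ since $z \cdot (\text{anything in }\Hcb) \in \Hcb$ by the algebra property and $a_j \cdot(\text{anything in }\Hcb)\in\Hcb$ trivially (so $z - a_j$ times an $\Hcb$ function is in $\Hcb$), while dividing by $(1-\overline{a_j}z)$ is exactly the operation in part (a). Thus $B \in \Hcb(\D)$; the boundedness of $B$ by $1$ on $\D$ is not automatically inherited as an $\Hcb$ bound, but existence in $\Hcb(\D)$ is all that's claimed.

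For part (c), let $f \in \mathcal{W}(\D)$ with $f(0)=0$, so $f(z) = \sum_{n\ge 1} c_n z^n$ with $\sum_{n\ge1}|c_n| < \infty$. Then $f = \sum_{n\ge1} c_n z^n$ as a series in $\Hcb(\D)$: the partial sums are finite-type polynomials vanishing at $0$, hence in $\Hcb(\D)$ (Remark \ref{remark-linear-cb-is-holomorphic-cb}), and since $\n{c_n z^n}_{\Hcb} \le |c_n|\, \n{z^n}_{\Hcb} \le |c_n|$ (again using $\n{z^n}_{\Hcb}\le\n{z}_{\Hcb}^n\le 1$ from the algebra property applied to the coordinate functional, which lies in $B_{\C'}$), the series converges absolutely in the Banach space $\Hcb(\D)$. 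By completeness (Lemma \ref{lemma-Hcb-Banach}) the limit lies in $\Hcb(\D)$, and it must equal $f$ since convergence in $\Hcb(\D)$ implies pointwise convergence on $\D$. This also yields $\n{f}_{\Hcb} \le \sum_{n\ge1}|c_n| = \n{f}_{\mathcal{W}(\D)}$ (after subtracting the constant term, which is $0$).

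The only real obstacle is the bookkeeping in part (a)/(b): making sure the decomposition $f(z)/(1-az) = f(z) + af(z)\cdot z/(1-az)$ (or the equivalent geometric-series manipulation) is carried out so that every term genuinely lands in $\Hcb(\D)$ — in particular remembering that a function must vanish at $0$ to belong to $\Hcb(\D)$ (Remark \ref{remark-0-at-0}), which is why one cannot treat the individual Blaschke factors as $\Hcb$ elements but must instead keep the vanishing factor $z^k$ attached throughout and only ever multiply by $z$, by constants, and divide by $(1-\overline{a_j}z)$. Parts (b) and (c) are then routine once (a) and the completely contractive Banach algebra structure are in hand.
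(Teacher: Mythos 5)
Your proposal is correct and follows essentially the same route as the paper: part (a) via the geometric-series expansion $f(z)/(1-az)=\sum_{n\ge 0}a^n z^n f(z)$ combined with the completely contractive Banach algebra structure of $\Hcb(\D)$ (your ``alternative'' computation is literally the paper's), part (b) by noting the numerator polynomial vanishes at $0$ and repeatedly applying (a), and part (c) by absolute convergence of $\sum c_n z^n$ in $\Hcb(\D)$. Your explicit caution about keeping the vanishing-at-$0$ factor attached throughout (b) is a correct reading of Remark \ref{remark-0-at-0} and matches the paper's arrangement.
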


\begin{proof}
Given $f \in \Hcb(\D)$, note that
\[
\frac{f(z)}{1-az} = f(z) + \sum_{n=1}^\infty f(z) (az)^n.
\]
The series converges absolutely in $\Hcb(\D)$ by using Proposition \ref{proposition-Hinfty-is-algebra} as we did in the proof of Proposition \ref{proposition-phi-over-1-minus-phi}, yielding the desired result. Once again, the norm estimate is immediate from the argument.

Now consider a finite Blaschke product of the form 
\[
\mathfrak{B}(z) = c z^m \prod_{j=1}^n \frac{z-a_j}{1 - \overline{a_j}z} 
\]
where $m \ge 1$, $c\in\T$ and $a_1 ,\dotsc, a_n \in \D$.
Note that $P(z) = c z^m \prod_{j=1}^n (z-a_j)$ is a polynomial vanishing at $0$, which therefore belongs to $\Hcb(\D)$. Repeated applications of the first part yield that $\mathfrak{B}(z)$ belongs to  $\Hcb(\D)$ as well.

Finally, suppose that $f \in \mathcal{W}(\D)$ satisfies $f(0)=0$. Then $f$ has a Taylor series $f(z) = \sum_{n=1}^\infty a_n z^n$ with $\sum_{n=1}^\infty |a_n| < \infty$, so by the same arguments as above we conclude $f \in \Hcb(\D)$.
\end{proof}

\begin{question}
Note that in the vector-valued case, we have plenty of examples of holomorphic functions in $\mathcal{H}^\infty(B_V,W)$ which are not in $\Hcb(B_V,W)$: simply consider a linear map $V \to W$ which is bounded but not completely bounded.
In the scalar-valued case, we have not been able to produce analogous examples. In particular, we ask: does every $f \in \mathcal{H}^\infty(\D)$ with $f(0)=0$ belong to $\Hcb(\D)$?
\end{question}

We now observe that amplifications of holomorphic functions are also holomorphic.

\begin{lemma}\label{lemma-amplifications-are-holomorphic}
Let $V$ and $W$ be operator spaces, and let
 $f_{ij}: B_V \to W$ be holomorphic functions for $1 \le i,j \le n$.
 \begin{enumerate}[(a)]
 \item The function $g : B_{V} \to M_n(W)$ given by $g(x) = \big(f_{ij}(x)\big)$ is holomorphic. Moreover, for each $x \in B_{V}$ we have $dg(x) = \big( df_{ij}(x)\big)$.
\item The function $f : B_{M_n(V)} \to M_n(W)$ given by $f(x_{ij}) = \big(f_{ij}(x_{ij})\big)$ is holomorphic. Moreover, for each $x=(x_{ij}) \in B_{M_n(V)}$ we have $df(x) = \big( df_{ij}(x_{ij})\big)$. In particular, if $f : B_V \to W$ is a holomorphic function, then for every $n$ its amplification $f_n$ is holomorphic from $B_{M_n(V)}$ to $M_n(W)$. Also, for each $x=(x_{ij}) \in B_{M_n(V)}$ we have $df_n(x) = \big( df(x_{ij})\big)$.
 \end{enumerate}
\end{lemma}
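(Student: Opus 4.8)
The plan is to deduce both parts from the characterization of holomorphy by Fr\'echet differentiability recalled in Section~\ref{sec:prelim}, using the elementary observation that for any operator space $W$ and any $(w_{ij}) \in M_n(W)$ one has
\[
\n{(w_{ij})}_{M_n(W)} \;\le\; \sum_{i,j=1}^n \n{w_{ij}}_W,
\]
since $(w_{ij})$ is the sum of the $n^2$ matrices each having a single nonzero entry, and such a matrix has $M_n(W)$-norm equal to the $W$-norm of that entry; consequently, if a net of matrices in $M_n(W)$ has each entry tending to $0$ in $W$, then the net tends to $0$ in $M_n(W)$. For part (a), fix $x_0 \in B_V$. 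Since each $f_{ij}$ is Fr\'echet differentiable at $x_0$, the candidate differential of $g$ is the linear map $L : V \to M_n(W)$, $L(h) = \big(df_{ij}(x_0)(h)\big)$, which is bounded because $\n{L(h)}_{M_n(W)} \le \big(\sum_{i,j}\n{df_{ij}(x_0)}\big)\n{h}$. Then
\[
g(x_0+h) - g(x_0) - L(h) \;=\; \Big( f_{ij}(x_0+h) - f_{ij}(x_0) - df_{ij}(x_0)(h) \Big)_{i,j},
\]
so by the displayed norm estimate the $M_n(W)$-norm of this quantity divided by $\n{h}$ is bounded by a finite sum of terms each tending to $0$ as $h \to 0$. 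Hence $g$ is Fr\'echet differentiable at $x_0$ with $dg(x_0) = L$, and therefore holomorphic, with the stated formula for $dg$.

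For part (b), I would reduce to (a). Let $P_{ij} : M_n(V) \to V$ denote the $(i,j)$-coordinate map $(x_{kl}) \mapsto x_{ij}$; it is linear and contractive since $\n{x_{ij}}_V \le \n{(x_{kl})}_{M_n(V)}$, so it maps $B_{M_n(V)}$ into $B_V$. Then $h_{ij} := f_{ij}\circ P_{ij} : B_{M_n(V)} \to W$ is holomorphic, being the composition of the bounded linear map $P_{ij}$ with the holomorphic map $f_{ij}$, and $f(x) = \big(h_{ij}(x)\big)_{i,j}$, which is exactly the situation of part (a) with domain $B_{M_n(V)}$. Thus $f$ is holomorphic, and by part (a) together with the chain rule, for $x = (x_{kl})$ and $h = (h_{kl})$,
\[
df(x)(h) \;=\; \big(dh_{ij}(x)(h)\big)_{i,j} \;=\; \big(df_{ij}(P_{ij}x)(P_{ij}h)\big)_{i,j} \;=\; \big(df_{ij}(x_{ij})(h_{ij})\big)_{i,j},
\]
which is the claimed identity $df(x) = \big(df_{ij}(x_{ij})\big)$. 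Taking $f_{ij} = f$ for all $i,j$ yields the ``in particular'' assertion about the amplification $f_n$ and its differential.

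I do not expect any genuine obstacle here: the argument is pure ``finitely many coordinates'' bookkeeping. The only points deserving care are reading the asserted identities $dg(x) = (df_{ij}(x))$ and $df(x) = (df_{ij}(x_{ij}))$ correctly as equalities of linear maps evaluated at an increment ($h$, resp.\ $h = (h_{ij})$), and invoking the $M_n(W)$-norm estimate above to pass from entrywise convergence to convergence in $M_n(W)$.
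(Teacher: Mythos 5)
Your proof is correct. Part (a) is exactly the paper's argument: bound the $M_n(W)$-norm of the error matrix by the sum of the $W$-norms of its entries and let each entrywise difference quotient go to $0$. For part (b) you take a slightly different route: the paper estimates the difference quotient directly, replacing the denominator $\n{h}_{M_n(V)}$ by $\n{h_{ij}}_V$ (for the nonzero entries) via the contractivity of the coordinate maps, whereas you first compose each $f_{ij}$ with the coordinate projection $P_{ij}$ and then invoke part (a) (applied with $M_n(V)$ as the domain space) together with the chain rule. Both are pure coordinate bookkeeping of the same depth; your reduction avoids repeating the $\varepsilon$--$\delta$ computation at the cost of implicitly re-instantiating part (a) for a different domain, which is legitimate since nothing in (a) uses more than that the domain is the open unit ball of a Banach space. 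The supporting facts you use --- that a matrix with a single nonzero entry has $M_n(W)$-norm equal to the $W$-norm of that entry, and that $\n{x_{ij}}_V \le \n{(x_{kl})}_{M_n(V)}$ --- are standard consequences of Ruan's axioms, so there is no gap.
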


\begin{proof}
For part $(a)$, fix $x \in B_V$ and consider nonzero
 $h \in V$ of sufficiently small norm so that $x+h$ still belongs to $B_{V}$. Thus,
\[
\frac{\n{ \big( f_{ij}(x+h) - f_{ij}(x) -  df_{ij}(x)h \big) }_{M_n(W)}}{\n{h}_{V}}
\le \sum_{i,j=1}^n \frac{\n{ f_{ij}(x+h) - f_{ij}(x) -  df_{ij}(x)h  }_{W}}{\n{h}_{V}}
\]
which implies the desired conclusion.

For part $(b)$, now fix $x=(x_{ij}) \in B_{M_n(V)}$.
For any nonzero $h=(h_{ij}) \in M_n(V)$ of sufficiently small norm so that $x+h$ still belongs to $B_{M_n(V)}$, we have
\begin{multline*}
\frac{\n{ f(x+h) - f(x) - \big( df_{ij}(x_{ij})h_{ij} \big) }_{M_n(W)}}{\n{h}_{M_n(V)}}
=
\frac{\n{ \big( f_{ij}(x_{ij}+h_{ij}) - f_{ij}(x_{ij}) -  df(x_{ij})h_{ij} \big) }_{M_n(W)}}{\n{h}_{M_n(V)}} \\
\le \sum_{i,j=1}^n \frac{\n{ f_{ij}(x_{ij}+h_{ij}) - f_{ij}(x_{ij}) -  df_{ij}(x_{ij})h_{ij}  }_{W}}{\n{h}_{M_n(V)}} 
\le \sum_{i,j|\, h_{ij}\not=0} \frac{\n{ f_{ij}(x_{ij}+h_{ij}) - f_{ij}(x_{ij}) -  df_{ij}(x_{ij})h_{ij}  }_{W}}{\n{h_{ij}}_{V}}
\end{multline*}
 which implies the desired conclusion.
\end{proof}

\begin{remark}\label{remark-Hcb-in-terms-of-H}
Note that from the previous lemma, whenever $(f_{ij}) \in M_n(\Hcb(B_V,W))$ we have
\[
\n{(f_{ij})}_{M_n(\Hcb(B_V,W))} = \sup_m \n{\big((f_{ij})_m\big)}_{\mathcal{H}^\infty(B_{M_m(V)};M_{mn}(W))}.
\] 
The equality of the norms is clear from \eqref{eqn-oss-for-Hcb}, the role of the previous lemma is to justify that the norms of the right hand side make sense.
\end{remark}

Note that from \eqref{eqn-oss-for-Hcb}, for any $(x_{kl}) \in B_{M_m(V)}$ and any $(f_{ij}) \in M_n(\Hcb(B_V,W))$ we have 
$\|\big((f_{ij}(x_{kl})\big)\|_{M_{nm}(W)} \le  \n{(f_{ij})}_{M_n(\Hcb(B_V,W))}$. In the next corollary we show a refinement of this inequality.

\begin{corollary}\label{corollary-Schwarz}
Let $V$ and $W$ be operator spaces.
\begin{enumerate}[(a)]
\item For any 
 $f \in \Hcb(B_V,W)$ and $x \in B_{M_n(V)}$, we have $\n{f_n(x)}_{M_n(W)} \le \n{f}_{\Hcb} \n{x}_{M_n(V)}$.
\item For any $(x_{kl}) \in B_{M_m(V)}$ and any $(f_{ij}) \in M_n(\Hcb(B_V,W))$ we have 
$$\|\big((f_{ij}(x_{kl})\big)\|_{M_{nm}(W)} \le  \n{(f_{ij})}_{M_n(\Hcb(B_V,W))} \n{(x_{kl})}_{M_m(V)}.$$
\item Given 
 $f \in \Hcb(B_V,W)$, defining for each $x = (x_{ij}) \in B_{\K(V)}$ the function $f_\infty(x):=\big( f(x_{ij}) \big) $ results that $f_\infty\in \mathcal H^\infty(B_{\K(V)},\K(W))$ with $\|f_\infty\|_{\mathcal{H}^\infty}=\n{f}_{\Hcb}$.
Furthermore, $f_\infty\in\Hcb(B_{\K(V)},\K(W))$ with $\|f_\infty\|_{\Hcb}=\n{f}_{\Hcb}$.
\end{enumerate}
\end{corollary}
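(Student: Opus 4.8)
The plan is to do the three parts in order: (a) via a Schwarz-lemma argument, (b) as an immediate consequence of (a), and (c) --- the substantial part --- via a truncation argument in $\K(V)$ that feeds on (a). For (a): by Remark~\ref{remark-0-at-0} we have $f(0)=0$, so the amplification $f_n\colon B_{M_n(V)}\to M_n(W)$ (holomorphic by Lemma~\ref{lemma-amplifications-are-holomorphic}(b)) vanishes at $0$, and by \eqref{eqn-oss-for-Hcb} with $m=n$ it is bounded by $\n{f}_{\Hcb}$; the classical Schwarz lemma for holomorphic maps between Banach spaces --- obtained by applying the one-variable Schwarz lemma to $\lambda\mapsto\varphi\big(f_n(\lambda x/\n{x}_{M_n(V)})\big)$ for $\varphi$ in the unit ball of $M_n(W)'$ --- then gives $\n{f_n(x)}_{M_n(W)}\le\n{f}_{\Hcb}\n{x}_{M_n(V)}$. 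For (b): the map $g:=(f_{ij})\colon B_V\to M_n(W)$ is holomorphic by Lemma~\ref{lemma-amplifications-are-holomorphic}(a) and, by \eqref{eqn-oss-for-Hcb}, lies in $\Hcb(B_V,M_n(W))$ with $\n{g}_{\Hcb(B_V,M_n(W))}=\n{(f_{ij})}_{M_n(\Hcb(B_V,W))}$; since $g_m\big((x_{kl})\big)=\big(f_{ij}(x_{kl})\big)$ and $M_m(M_n(W))=M_{nm}(W)$ completely isometrically, applying (a) to $g$ at $(x_{kl})\in B_{M_m(V)}$ yields (b).

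For (c), the first task is to verify that the entrywise-defined map actually takes values in $\K(W)$. Given $x=(x_{ij})\in B_{\K(V)}$, let $x^{[N]}=P_NxP_N$ be the compression of $x$ to its first $N$ coordinates; then $x^{[N]}\in B_{M_N(V)}\subseteq B_{\K(V)}$, and $x^{[N]}\to x$ in $\K(V)$ because the compression maps $a\mapsto P_NaP_N$ converge to the identity on $\K=\K(\ell_2)$, hence on $\K(V)$. Put $g^{[N]}:=f_N(x^{[N]})$, viewed inside $\K(W)$: its $(i,j)$-entry is $f(x_{ij})$ for $i,j\le N$ and $0$ otherwise. For $N>M$ one checks that $g^{[N]}-g^{[M]}=f_N(x')$, where $x'=x^{[N]}-x^{[M]}\in M_N(V)$ is obtained from $x^{[N]}$ by replacing its top-left $M\times M$ block with $0$; thus $\n{x'}_{M_N(V)}=\n{x^{[N]}-x^{[M]}}_{\K(V)}\to0$ as $M,N\to\infty$, and once this quantity is $<1$, part (a) gives $\n{g^{[N]}-g^{[M]}}_{\K(W)}\le\n{f}_{\Hcb}\,\n{x^{[N]}-x^{[M]}}_{\K(V)}$. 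Hence $(g^{[N]})_N$ is Cauchy in $\K(W)$; its limit --- call it $f_\infty(x)$ --- has $(i,j)$-entry $f(x_{ij})$ for every $i,j$ (entry extraction from $\K(W)$ is continuous), so it is the value we want. Passing to the limit in $\n{g^{[N]}}_{\K(W)}\le\n{f}_{\Hcb}$ gives $\n{f_\infty(x)}_{\K(W)}\le\n{f}_{\Hcb}$, while feeding zero-padded matrices from $B_{M_m(V)}$ into $f_\infty$ recovers all the quantities defining $\n{f}_{\Hcb}$; hence $\n{f_\infty}_{\mathcal{H}^\infty(B_{\K(V)},\K(W))}=\n{f}_{\Hcb}$.

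Next come holomorphy and complete boundedness of $f_\infty$. For holomorphy: $h_N(x):=f_\infty(x^{[N]})$ is holomorphic on $B_{\K(V)}$ (it is $f_N$ precomposed with the linear compression $x\mapsto P_NxP_N$ and postcomposed with the inclusion $M_N(W)\hookrightarrow\K(W)$), with $\n{h_N(x)}\le\n{f}_{\Hcb}$, and the estimate above gives $\n{f_\infty(x)-h_N(x)}_{\K(W)}\le\n{f}_{\Hcb}\,\n{x-x^{[N]}}_{\K(V)}\to0$ for each fixed $x$. Since $f_\infty$ is bounded it suffices to show $\varphi\circ f_\infty$ is holomorphic for each $\varphi\in\K(W)'$, and this holds because $\varphi\circ f_\infty$ is the pointwise limit of the uniformly bounded holomorphic functions $\varphi\circ h_N$, hence holomorphic by Vitali's theorem (or, restricting to complex lines through each point, by a normal-families argument). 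For complete boundedness: under the canonical complete isometries $M_m(\K(V))\cong\K(V)$ and $M_m(\K(W))\cong\K(W)$ (induced by any bijection $\{1,\dots,m\}\times\N\to\N$, using $M_m(\K)\cong\K$), the $m$-th amplification $(f_\infty)_m$ is carried to $f_\infty$ itself, so $\n{(f_\infty)_m(y)}\le\n{f}_{\Hcb}$ for every $y\in B_{M_m(\K(V))}$; together with the trivial inequality $\n{f_\infty}_{\Hcb}\ge\n{f_\infty}_{\mathcal{H}^\infty}$ this yields $\n{f_\infty}_{\Hcb}=\n{f}_{\Hcb}$.

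The main obstacle lies entirely in part (c): first, showing that the entrywise-defined $f_\infty(x)$ genuinely belongs to $\K(W)$ --- not merely to some larger space of bounded infinite matrices over $W$ --- which is exactly where part (a) is indispensable, as it converts the convergence $x^{[N]}\to x$ in $\K(V)$ into a Cauchy estimate for $(g^{[N]})$; and, relatedly, establishing the holomorphy of $f_\infty$, where the naive route via uniform convergence of the truncations is unavailable (the compressions do not converge to the identity uniformly on a neighborhood of a point), forcing one to argue through weak holomorphy together with boundedness instead.
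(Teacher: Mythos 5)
Your proposal is correct and follows essentially the same route as the paper: Schwarz's lemma for (a), reduction of (b) to (a) by viewing $(f_{ij})$ as a single $M_n(W)$-valued function, and for (c) the same key Cauchy estimate for truncations derived from (a) together with the identification $M_m(\K(V))\cong\K(V)$ to get complete boundedness for free. The only cosmetic difference is in establishing holomorphy of $f_\infty$: the paper notes the truncations are Cauchy uniformly on compact subsets of $B_{\K(V)}$ and invokes a vector-valued Weierstrass-type theorem, whereas you use pointwise convergence plus uniform boundedness and weak holomorphy via Vitali --- both are valid.
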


\begin{proof}
By Lemma \ref{lemma-amplifications-are-holomorphic} the amplification $f_n : B_{M_n(V)} \to M_n(W)$ is holomorphic, and by Remark \ref{remark-0-at-0} it sends $0$ to $0$. Part $(a)$ now follows from Schwarz's lemma \cite[Thm. 7.19]{Mujica-book}.
Part $(b)$ follows by the exact same argument applied to the mapping $F:B_{M_m(V)}\to M_{nm}(W)$ given by $F((x_{kl}))=(f_{ij}(x_{kl})) $, which is holomorphic by Lemma \ref{lemma-amplifications-are-holomorphic}.

To show $(c)$, for any $n\in\N$ let us denote by $\rho_n:B_{\K(V)}\to B_{M_n(V)}$ and $\iota_n: M_n(W)\to \K(W)$ the natural restriction and injection. The sequence $(\iota_n\circ f_n\circ\rho_n)$ belongs to $\mathcal H^\infty(B_{\K(V)},\K(W))$ and it is bounded by 
$\n{f}_{\Hcb}$. From condition $f(0)=0$ and part $(a)$ we derive that, for $n<m$ and $x\in B_{\K(V)}$, 
\[
\|(\iota_m\circ f_m\circ\rho_m-\iota_n\circ f_n\circ\rho_n)(x)\| = \|\iota_m\circ f_m(\rho_m(x)-\rho_n(x))\|\le  \n{f}_{\Hcb} \n{(\rho_m-\rho_n)(x)}_{M_m(V)}.
\]
As a consequence, $(\iota_n\circ f_n\circ\rho_n)$ is Cauchy for the topology of uniform convergence on compact sets of $B_{\K(V)}$. Since this sequence is pointwise convergent to $f_\infty$, applying a vector-valued version of the Weierstrass-type  theorem \cite[Th. 2.13]{defant2019dirichlet} we get that $f_\infty\in \mathcal H^\infty(B_{\K(V)},\K(W))$ with $\|f_\infty\|_{\mathcal{H}^\infty}\le\n{f}_{\Hcb}$. The other inequality is clear by definition, so the norms are equal.

Now, from \cite[Cor. 5.10 and Eqn. (5.16)']{Pisier-Operator-Space-Theory} and the associativity of the Haagerup tensor product,
if $R$ and $C$ denote the row and column Hilbertian operator spaces, respectively, and $R_n$, $C_n$ their $n$-dimensional versions, 
\[
M_n(\K(V)) = C_n \widehat{\otimes}_h ( C \widehat{\otimes}_h V \widehat{\otimes}_h R) \widehat{\otimes}_h R_n =  (C_n \widehat{\otimes}_h  C) \widehat{\otimes}_h V \widehat{\otimes}_h (R \widehat{\otimes}_h R_n ) = C \widehat{\otimes}_h V \widehat{\otimes}_h R = \K(V),
\]
where the equal signs mean completely isometric isomorphisms, and similarly $M_n(\K(W)) = \K(W)$. Under these identifications, the amplification $(f_\infty)_n : B_{M_n(\K(V))} \to M_n(\K(W))$ corresponds to $f_\infty : B_{\K(V)} \to \K(W)$, from where it follows that $ \|f_\infty\|_{\Hcb}=\|f_\infty\|_{\mathcal{H}^\infty}$.

\end{proof}

Note that Corollary \ref{corollary-Schwarz} $(c)$ above is a version for holomorphic functions of the following well known fact \cite[p. 32]{Pisier-Operator-Space-Theory}: a linear map $T : V \to W$ is completely bounded if and only if $T_\infty : \K(V) \to \K(W)$ is bounded, and in this case $\n{T}_{\cb} = \n{T_\infty} = \n{T_\infty}_{\cb}$.

\begin{remark}\label{rmk:polynomials}
Note that if $f=P\in \Hcb(B_V,W)$ is an $m$-homogeneous polynomial then $P_\infty\in \mathcal H^\infty(B_{\K(V)},\K(W))$ is also an $m$-homogeneous polynomial.
Indeed, this holds because $m$-homogeneous polynomials are the restriction to the diagonal of $m$-linear mappings and it is clear that the $\infty$-amplification of an $m$-linear mapping is again $m$-linear.
\end{remark}

We now prove that for a completely bounded holomorphic function, the polynomials appearing in its Taylor series centered at 0 are also completely bounded holomorphic functions.

\begin{lemma}\label{lemma-Taylor-polynomials-at-0-are-Schur}
Let $V$ and $W$ be operator spaces.
If $f : B_V \to W$ is a completely bounded holomorphic function, then each of the polynomials in the Taylor expansion of $f$ at $0$ belongs to $\Hcb(B_V,W)$  (that is, they are Schur polynomials). Moreover, their norms are bounded by $ \n{f}_{\Hcb}$.
\end{lemma}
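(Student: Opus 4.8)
The plan is to extract the Taylor polynomials by the classical Cauchy-integral trick and show that this operation does not increase the $\Hcb$-norm. Recall that if $f:B_V\to W$ is holomorphic with $f(x)=\sum_{m=0}^\infty P^{[m]}f(0)(x)$, then for $0<r<1$ and $x\in B_V$ one has the Cauchy integral formula
\[
P^{[m]}f(0)(x) = \frac{1}{2\pi}\int_0^{2\pi} f(re^{i\theta}x)\, r^{-m}e^{-im\theta}\, d\theta.
\]
The first step is to verify that $P:=P^{[m]}f(0)$ is a completely bounded holomorphic function with $\|P\|_{\Hcb}\le \|f\|_{\Hcb}$. Fix $m\in\N$, fix $n\in\N$ and a matrix $x=(x_{ij})\in B_{M_n(V)}$; we must bound $\|(P(x_{ij}))\|_{M_n(W)}$. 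Applying the Cauchy formula entrywise and using that $B_{M_n(V)}$ is convex and balanced (so $re^{i\theta}x\in B_{M_n(V)}$ for $r<1$), we get
\[
\big(P(x_{ij})\big) = \frac{1}{2\pi}\int_0^{2\pi} \big(f(re^{i\theta}x_{ij})\big)\, r^{-m}e^{-im\theta}\, d\theta = \frac{1}{2\pi}\int_0^{2\pi} f_n(re^{i\theta}x)\, r^{-m}e^{-im\theta}\, d\theta,
\]
where $f_n$ is the amplification, which is holomorphic by Lemma \ref{lemma-amplifications-are-holomorphic}. Taking norms inside the integral (the integrand is a continuous $M_n(W)$-valued function of $\theta$, so this is a standard vector-valued estimate) yields
\[
\big\|\big(P(x_{ij})\big)\big\|_{M_n(W)} \le \frac{1}{2\pi}\int_0^{2\pi} \big\|f_n(re^{i\theta}x)\big\|_{M_n(W)}\, r^{-m}\, d\theta \le r^{-m}\,\|f\|_{\Hcb},
\]
since $re^{i\theta}x\in B_{M_n(V)}$. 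Letting $r\to 1^-$ gives $\|(P(x_{ij}))\|_{M_n(W)}\le\|f\|_{\Hcb}$, and taking the supremum over $n$ and over $x\in B_{M_n(V)}$ shows $P\in\Hcb(B_V,W)$ with $\|P\|_{\Hcb}\le\|f\|_{\Hcb}$. In particular $P$ is a Schur polynomial in the sense of \cite{Defant-Wiesner}.

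The main technical point to be careful about is the legitimacy of passing the amplified function under the integral sign and estimating the norm of a vector-valued integral by the integral of the norm; this is routine (the integrand $\theta\mapsto f_n(re^{i\theta}x)$ is continuous into the finite-dimensional-entried space $M_n(W)$, and one can either work with Riemann sums or invoke the standard Bochner-integral inequality), but it is the step that requires the holomorphy of the amplifications established in Lemma \ref{lemma-amplifications-are-holomorphic}. A secondary subtlety is that we need $re^{i\theta}x$ to lie in the \emph{open} ball $B_{M_n(V)}$ for $r<1$, which holds because $\|re^{i\theta}x\|=r\|x\|<1$; this is why we integrate over a circle of radius $r<1$ and only afterwards let $r\uparrow 1$. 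No convexity beyond absolute scalar homogeneity of the norm on $M_n(V)$ is actually used, so the argument is completely parallel to the classical one, with the single new ingredient being that the relevant estimate is applied simultaneously at every matrix level $n$ and that the supremum defining $\|\cdot\|_{\Hcb}$ is taken afterwards.
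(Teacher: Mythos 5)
Your proof is correct and follows essentially the same route as the paper's: both arguments identify the amplification of the $m$-th Taylor polynomial with the $m$-th Taylor polynomial of the amplification $f_n$ and then apply the Cauchy estimate on each matrix level before taking the supremum. The only cosmetic difference is that you write out the Cauchy integral formula explicitly, whereas the paper invokes the packaged Cauchy inequality from Mujica's book after checking uniform convergence of the amplified Taylor series on $rB_{M_n(V)}$.
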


\begin{proof}
Let $f(x) = \sum_{n=1}^\infty P^{[m]}(x)$ be the Taylor series of $f$ at $0$, where each $P^{[m]} \in \mathcal{P}(^mV,W)$ and let $r<1$ such that the series converges uniformly on $rB_V$.
From \cite{Defant-Wiesner}, for each $n\in \N$ the $n$-th amplification of $P^{[m]}$ is a polynomial in  $\mathcal{P}(^mM_n(V),M_n(W))$.
Since $r B_{M_n(V)} \subset M_n(rB_V)$ (that is, the norm of a matrix in $M_n(V)$ dominates the norms of each of its entries), for each $(x_{ij})$ in $r B_{M_n(V)}$ we have pointwise
\[
f_n(x_{ij}) = \big(f(x_{ij})\big) = \sum_{n=1}^\infty P^{[m]}_n((x_{ij})).
\]
Moreover, by the triangle inequality the convergence is uniform on $r B_{M_n(V)}$.
This implies that $P^{[m]}_n$ is the $m$-th Taylor polynomial for $f_n$ at $0$, so by the Cauchy inequality \cite[Cor. 7.4]{Mujica-book} for every $\rho\in(0,1)$ we have
\[
\n{P^{[m]}_n}_{\mathcal{H}^\infty(\rho B_{M_n(V)},M_n(W))} \le \rho^{-m} \n{f}_{\Hcb(B_V,W)}.
\]
Taking the supremum over all such $\rho$, the conclusion follows.    
\end{proof}

Note that from the previous result, in particular the derivative at 0 of a completely bounded holomorphic function is a completely bounded linear mapping.
We do not know whether this is also true at points other than $0$. That is, if $f : B_V \to W$ is a completely bounded holomorphic function, are its derivatives at all points completely bounded?

\begin{corollary}\label{corollary-biholomorphic-balls-implies-isomorphism}
Let $V$ and $W$ be operator spaces.
If there is a bijection $f : B_V \to B_W$ such that both $f$  and $f^{-1}$ are completely bounded holomorphic functions, then $V$ and $W$ are completely isomorphic.   
\end{corollary}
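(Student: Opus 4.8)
The plan is to exploit the linear structure extracted from the derivative at $0$, which Lemma \ref{lemma-Taylor-polynomials-at-0-are-Schur} tells us is completely bounded. Given the biholomorphism $f : B_V \to B_W$ with completely bounded $f$ and $g := f^{-1}$, first I would apply Remark \ref{remark-0-at-0}: since both $f$ and $g$ lie in their respective spaces of completely bounded holomorphic functions, we have $f(0)=0$ and $g(0)=0$. Next, differentiate the identities $g \circ f = \mathrm{id}_{B_V}$ and $f \circ g = \mathrm{id}_{B_W}$ at $0$ using the chain rule, obtaining $dg(0) \circ df(0) = \mathrm{id}_V$ and $df(0) \circ dg(0) = \mathrm{id}_W$. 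Thus $T := df(0) : V \to W$ is a linear isomorphism with inverse $dg(0)$.

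The key point is then that $T$ and $T^{-1}$ are \emph{completely bounded}. By the remark following Lemma \ref{lemma-Taylor-polynomials-at-0-are-Schur} (i.e., the $m=1$ case of that lemma), the derivative at $0$ of any element of $\Hcb(B_V,W)$ is in $\CB(V,W)$ with cb-norm bounded by the $\Hcb$-norm of the function. Applying this to $f$ gives $T = df(0) \in \CB(V,W)$ with $\n{T}_{\cb} \le \n{f}_{\Hcb}$, and applying it to $g$ gives $T^{-1} = dg(0) \in \CB(W,V)$ with $\n{T^{-1}}_{\cb} \le \n{g}_{\Hcb}$. Hence $T$ is a completely bounded linear isomorphism with completely bounded inverse, which is exactly the statement that $V$ and $W$ are completely isomorphic.

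I expect essentially no serious obstacle here: the only thing requiring a little care is justifying the chain rule for Fréchet-differentiable (holomorphic) maps between the relevant open balls, but this is standard and holds in any Banach space (and composition of holomorphic maps is holomorphic). One should also note that a completely bounded holomorphic function need not have completely bounded derivatives at points other than $0$ — a subtlety flagged explicitly in the paragraph preceding this corollary — so it is important that the argument only uses the derivative at the origin, which is available precisely because $f(0)=0=g(0)$ places us at a fixed point common to both maps. (This is why the fixed-point normalization forced by Remark \ref{remark-0-at-0} is doing real work.) Finally, if one wants the completely isometric version (as in Proposition 3.14 mentioned in the introduction, for the case of a completely contractive biholomorphism), one would additionally invoke the operator-space Cartan-type uniqueness theorem, but for the bare completely-isomorphic statement of this corollary the derivative argument above suffices.
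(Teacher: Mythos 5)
Your proof is correct and follows essentially the same route as the paper's: differentiate $f^{-1}\circ f=\mathrm{id}$ (and $f\circ f^{-1}=\mathrm{id}$) at $0$, use Remark~\ref{remark-0-at-0} to place the derivatives at the common fixed point $0$, and invoke Lemma~\ref{lemma-Taylor-polynomials-at-0-are-Schur} to conclude that $df(0)$ and $df^{-1}(0)$ are completely bounded mutually inverse linear maps. The paper's proof is just a terser statement of this same argument.
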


\begin{proof}
Take the derivative at $0$ for $Id = f^{-1}f : B_V \to V$.
Noting that $f(0)=0$ by Remark \ref{remark-0-at-0}, Lemma \ref{lemma-Taylor-polynomials-at-0-are-Schur} yields the conclusion.
\end{proof}

In fact more is true, an operator space version of the classical fact that Banach spaces with biholomorphic unit balls are isometric \cite{Kaup-Upmeier}.

\begin{proposition}\label{proposition-biholomorphic-balls-implies-isometry}
Let $V$ and $W$ be operator spaces.
If there is a bijection $f : B_V \to B_W$ such that both $f$  and $f^{-1}$ are completely bounded holomorphic functions, then $V$ and $W$ are completely isometric. Moreover, $f$ is the restriction to $B_V$ of a complete isometry between $V$ and $W$.       
\end{proposition}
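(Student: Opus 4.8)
The plan is to reduce Proposition~\ref{proposition-biholomorphic-balls-implies-isometry} to the classical result of Kaup and Upmeier \cite{Kaup-Upmeier}, and then upgrade the isometry to a complete isometry using the tools already developed in the excerpt. First I would invoke Corollary~\ref{corollary-biholomorphic-balls-implies-isomorphism} to know that $V$ and $W$ are (completely) isomorphic, and in particular that as Banach spaces they have biholomorphic unit balls; by \cite{Kaup-Upmeier} there is then a \emph{linear} surjective isometry $u : V \to W$. In fact the Kaup--Upmeier theorem gives more: the biholomorphism $f$ itself is automatically linear (a biholomorphism between balls of complex Banach spaces fixing the origin is the restriction of a linear isometry), and here $f(0)=0$ by Remark~\ref{remark-0-at-0}. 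So $f$ is the restriction to $B_V$ of a surjective linear isometry $u : V \to W$.

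The remaining task is to show that this linear isometry $u$ is in fact a complete isometry. Here I would use Corollary~\ref{corollary-Schwarz}$(c)$ (or Lemma~\ref{lemma-Taylor-polynomials-at-0-are-Schur} applied to the linear term of the Taylor series, which for a linear map is the map itself). Since $f = u|_{B_V} \in \Hcb(B_V,W)$, Lemma~\ref{lemma-Taylor-polynomials-at-0-are-Schur} tells us the degree-one Taylor polynomial of $f$ at $0$ — which is exactly $u$ restricted to $B_V$, extended by homogeneity — lies in $\Hcb(B_V,W)$; since this polynomial is linear, Remark~\ref{remark-linear-cb-is-holomorphic-cb} identifies its $\Hcb$-norm with $\n{u}_{\cb}$, and the bound from Lemma~\ref{lemma-Taylor-polynomials-at-0-are-Schur} gives $\n{u}_{\cb} \le \n{f}_{\Hcb}$. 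Applying the same reasoning to $f^{-1} = u^{-1}|_{B_W}$ yields $\n{u^{-1}}_{\cb} \le \n{f^{-1}}_{\Hcb}$.

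To get a genuine \emph{complete isometry} rather than merely a complete isomorphism, I would pass to the $\infty$-amplification. By Corollary~\ref{corollary-Schwarz}$(c)$, $f_\infty : B_{\K(V)} \to \K(W)$ is a bounded holomorphic function with $\n{f_\infty}_{\mathcal{H}^\infty} = \n{f}_{\Hcb} = 1$ (the norm is $1$ since $f$ maps $B_V$ bijectively onto $B_W$, so $\n{f}_{\Hcb} \ge \sup_{x\in B_V}\n{f(x)} = \sup_{w\in B_W}\n{w} = 1$, while $\le 1$ follows because $f$ is valued in $B_W$; more carefully one argues $f(B_V) = B_W$ forces $\n{f}_{\mathcal{H}^\infty} = 1$ and then an amplification-monotonicity argument, or one simply notes that $f_\infty$ is a biholomorphism of $B_{\K(V)}$ onto $B_{\K(W)}$ because each level $f_n$ is such a biholomorphism by hypothesis together with the inverse also being completely bounded). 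Then $f_\infty$ and $(f^{-1})_\infty = (f_\infty)^{-1}$ are mutually inverse biholomorphisms between the unit balls of the Banach spaces $\K(V)$ and $\K(W)$, both of $\mathcal{H}^\infty$-norm $1$; by \cite{Kaup-Upmeier} again, $f_\infty$ is the restriction of a surjective linear isometry $u_\infty : \K(V) \to \K(W)$, and since $f_\infty$ was already the $\infty$-amplification of the linear map $u$, we get $u_\infty = (u)_\infty$ is an isometry, i.e. $\n{u}_{\cb} = 1$. Symmetrically $\n{u^{-1}}_{\cb} = 1$, so $u$ is a complete isometry of $V$ onto $W$ whose restriction to $B_V$ is $f$, as claimed.

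The main obstacle is the bookkeeping in the last step: making sure that the linear isometry produced by Kaup--Upmeier at the level of $\K(V)$ really is the $\infty$-amplification of a map $V \to W$ (rather than some exotic isometry of $\K(V)$ that does not respect the matrix structure). This is handled by uniqueness in the Kaup--Upmeier theorem: since $f_\infty$ already equals $(u)_\infty$ on $B_{\K(V)}$ and $(u)_\infty$ is linear, the linear isometry extending $f_\infty$ must \emph{be} $(u)_\infty$, so no compatibility issue arises. A small secondary point to verify carefully is that $\K(V) = M_n(\K(V))$ completely isometrically (already recorded in the proof of Corollary~\ref{corollary-Schwarz}$(c)$), so that $\n{(u)_\infty} = \n{(u)_\infty}_{\cb} = \n{u}_{\cb}$, closing the loop.
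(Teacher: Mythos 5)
Your overall strategy is sound and close in spirit to the paper's own argument: the paper applies Arazy's lemma to each amplification $f_n : B_{M_n(V)} \to B_{M_n(W)}$ separately and observes that the resulting linear isometries $T^{[n]}$ must be the amplifications of $T^{[1]}$, whereas you apply Kaup--Upmeier once at level one and once at the level of $\K(V)$; these routes are interchangeable, and the ``bookkeeping'' you worry about at the end (identifying the isometry produced at the $\K$-level with $u_\infty$) is indeed handled correctly by your uniqueness observation.

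The genuine gap is earlier, in the step you pass over quickly: the claim that $f_\infty$ (equivalently, each $f_n$) is a bijection of $B_{\K(V)}$ onto $B_{\K(W)}$ ``because each level $f_n$ is such a biholomorphism by hypothesis.'' The hypothesis only says that $f$ is a bijection of $B_V$ onto $B_W$; it says nothing about higher matrix levels. Your fallback, that $\n{f}_{\Hcb}\le 1$ ``because $f$ is valued in $B_W$,'' controls only the first level of the supremum defining $\n{f}_{\Hcb}$: each entry $f(x_{ij})$ lies in $B_W$, but the matrix $\big(f(x_{ij})\big)$ need not have norm less than $1$ in $M_n(W)$. This step cannot be repaired from the stated hypotheses: let $E$ be finite-dimensional with $\dim E\ge 3$ and let $f$ be the identity map $B_{\MIN(E)}\to B_{\MAX(E)}$. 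It is a bijection of the (identical) unit balls, and both $f$ and $f^{-1}$ are restrictions of completely bounded linear maps, hence completely bounded holomorphic; yet $\MIN(E)$ and $\MAX(E)$ are not completely isometric. So the conclusion requires the additional normalization $\n{f}_{\Hcb}\le 1$ and $\n{f^{-1}}_{\Hcb}\le 1$ (equivalently, that every amplification maps the open matrix ball into the open matrix ball) --- a hypothesis the paper's own proof also uses tacitly when it writes $f_n : B_{M_n(V)}\to B_{M_n(W)}$. Under that extra assumption, Schwarz's lemma (Corollary \ref{corollary-Schwarz}$(a)$) gives $f_n(B_{M_n(V)})\subseteq B_{M_n(W)}$ and likewise for $f^{-1}$, the amplifications become mutually inverse bijections of the matrix balls, and the rest of your argument goes through.
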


\begin{proof}
For each $n\in\N$, the amplifications $f_n : B_{M_n(V)} \to B_{M_n(W)}$ and $(f^{-1})_n : B_{M_n(W)} \to B_{M_n(V)}$ are clearly injective. Since they are inverses of each other, they must be surjective and hence bijections.
Once again noting that $f(0)=0$ by Remark \ref{remark-0-at-0}, applying \cite[Lemma 2.1]{Arazy} we get that for each $n\in\N$ there is a linear isometry $T^{[n]} : M_n(V) \to M_n(W)$ whose restriction to $B_{M_n(V)}$ is precisely $f_n$. It follows that $T^{[n]}$ must be the $n$-th amplification of $T^{[1]}$, and the desired conclusion follows.
\end{proof}

\begin{remark}
Note that the proof above is easier than in the classical case because we have the condition $f(0)=0$; it was already pointed out in \cite{Arazy} that this condition allows for simplified arguments.
\end{remark}

\section{The predual of $\Hcb(B_V)$: Dixmier-Ng-Kaijser approach}\label{Sect:Dixmier-Ng-Kaijser}

One of the various classical ways to see that $\mathcal{H}^\infty(U)$ is a dual space is by using the Dixmier-Ng theorem, see e.g. \cite[Proof of Thm. 2.1]{Mujica-linearization} (and \cite[p. 7]{aron2023linearization} for a similar result for holomorphic Lipschitz functions). Moving to the operator space framework, note that if $X'=V$ as Banach spaces and $V$ is an operator space, one can naturally endow $X$ with an operator space structure since $X$ sits canonically inside $X''=V'$ and the latter has a natural operator space structure. The question now is whether in this way $X$ becomes an operator space predual of $V$, and this was explicitly asked by Blecher. Le Merdy \cite{LeMerdy-duality} showed that the answer in general is no, and provided necessary and sufficient conditions for a positive answer.
A version of Kaijser \cite{Kaijser} of the Dixmier-Ng theorem  combined with  Le Merdy's result  about conditions for operator space preduals, yields the following:

\begin{theorem}\label{thm-Kaijser}
Let $V$ be an operator space, and let $Y \subset V'$ be a set of continuous linear functionals on $V$ such that:
\begin{enumerate}[(a)]
\item $Y$ separates points of $V$.
\item For each $n\in\N$, the closed unit ball of $M_n(V)$ is compact for the entrywise $\sigma(V,Y)$-topology.
\end{enumerate}
Then $V$ is a dual operator space, and a predual for $V$ is the closure in $V'$ of the linear span of the set $Y$.    
\end{theorem}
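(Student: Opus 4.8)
The plan is to reduce the statement to two known results: Kaijser's version \cite{Kaijser} of the Dixmier--Ng theorem, applied purely at the level of Banach spaces, and Le Merdy's criterion \cite{LeMerdy-duality} for deciding when an operator space that happens to be a dual Banach space is in fact a dual operator space.

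First I would apply the Banach-space result with $n=1$. Hypothesis (a) is exactly the assertion that the topology $\sigma(V,Y)$ is Hausdorff, and hypothesis (b) with $n=1$ says that $\overline{B}_V$ is $\sigma(V,Y)$-compact. Since $V$ is complete, Kaijser's refinement of Dixmier--Ng then produces an isometric identification $V = X'$ of Banach spaces, where $X := \overline{\spa}(Y)$ (closure in $V'$). We endow $X$ with the operator space structure it carries as a closed subspace of the dual operator space $V'$; this is precisely the candidate predual in the statement, so all that remains is to upgrade the Banach-space identification $V = X'$ to a complete isometry, and that is exactly what Le Merdy's criterion checks.

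Le Merdy's theorem states that, for an operator space $V$ which is a Banach-space dual with predual $X \subseteq V'$ (the latter with the induced operator space structure), the canonical map $V \to X'$ is a complete isometry if and only if for every $n$ the ball $\overline{B}_{M_n(V)}$ is compact for the natural weak* topology $\sigma\big(M_n(V), M_n(X)\big)$ (equivalently $\sigma(M_n(V), S^1_n(X))$: the trace class $S^1_n(X)$ and $M_n(X)$ have the same underlying vector space and induce the same locally convex topology on $M_n(V)$). I would verify this by identifying, on that ball, the weak* topology with the entrywise $\sigma(V,Y)$-topology of hypothesis (b), in two steps. \emph{Step 1:} with respect to the pairing $\langle (f_{ij}), (x_{k\ell}) \rangle = \sum_{i,j} f_{ij}(x_{ij})$, testing against matrices with a single nonzero entry shows that $\sigma(M_n(V), M_n(X))$ coincides, on all of $M_n(V)$, with the entrywise $\sigma(V,X)$-topology. \emph{Step 2:} since $\spa(Y)$ is norm-dense in $X$ and all entries of matrices in $\overline{B}_{M_n(V)}$ lie in $\overline{B}_V$, a routine $3\eps$-estimate shows that the entrywise $\sigma(V,Y)$- and $\sigma(V,X)$-topologies agree on $\overline{B}_{M_n(V)}$. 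Combining the two steps, on $\overline{B}_{M_n(V)}$ the topology $\sigma(M_n(V), M_n(X))$ is exactly the entrywise $\sigma(V,Y)$-topology, so hypothesis (b) delivers precisely the compactness required by Le Merdy's criterion, and his theorem concludes the proof.

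The part requiring the most care is this last identification of topologies: one must keep straight the (scalar) pairing between $M_n(V)$ and $M_n(X)$, confirm that Le Merdy's ``natural weak* topology'' really is the entrywise one, and remember that compactness passes from the entrywise $\sigma(V,Y)$-topology to the a priori finer entrywise $\sigma(V,X)$-topology only because the two actually coincide on the bounded set $\overline{B}_{M_n(V)}$, not merely because one refines the other.
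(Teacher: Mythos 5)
Your proposal is correct and follows essentially the same route as the paper: Kaijser's version of Dixmier--Ng at level $n=1$ to obtain the Banach-space predual $\overline{\spa}(Y)$, then Le Merdy's criterion, verified by identifying the relevant weak* topology on $\overline{B}_{M_n(V)}$ with the entrywise $\sigma(V,Y)$-topology of hypothesis (b). The only (immaterial) difference is that you establish the coincidence of the entrywise $\sigma(V,Y)$- and $\sigma(V,\overline{\spa}Y)$-topologies on the ball by a density/$3\varepsilon$ estimate, whereas the paper gets it from the continuous-bijection-between-compact-Hausdorff-spaces argument at level $n=1$.
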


\begin{proof}
Due to $(a)$ and condition $(b)$ for $n=1$, it follows from \cite{Kaijser}  that $V$ is a dual Banach space, with Banach space predual $W = \overline{\spa} Y \subset V'$. 
Moreover, note that on $B_V$ the $\sigma(V,Y)$ and $\sigma(V,W)$ topologies are the same: the identity map on $B_V$ is clearly a continuous bijection from the $\sigma(V,W)$ to the $\sigma(V,Y)$ topology, and both of these topologies are compact and Hausdorff, so they must coincide.
By \cite[Prop. 3.1]{LeMerdy-duality}, in order to obtain the conclusion it suffices to show that for each $n\in\N$ the set $B_{M_n(V)}$ is closed in the entrywise $\sigma(V,W)$-topology. This follows from the assumption $(b)$ and the aforementioned coincidence of the $\sigma(V,Y)$ and $\sigma(V,W)$ topologies on $B_V$.
\end{proof}

Before stating our duality theorem, we remark that from the definition of the norm on $\Hcb(B_V)$ it is clear that for any $x\in B_V$ the evaluation functional $\delta_V(x)$ belongs to $\Hcb(B_V)'$ and   $\|\delta_V(x)\|=\|x\|$. Indeed, by Corollary \ref{corollary-Schwarz} $(a)$ the norm of  $\delta_V(x)$ is at most $\|x\|$. The other inequality follows taking $x'\in S_{V'}$  such that $x'(x)=\|x\|$, so $\delta_V(x)(x')=\|x\|$. Note that, at first sight, this appears to be in sharp contrast with the classical case \cite{Mujica-linearization}: for every $x \in B_V$ one has $\n{\delta_V(x)}_{\mathcal H^\infty(B_V)'}=1$.
This is due to the fact that functions in $\Hcb(B_V)$ have to take the value $0$ at $0$, whereas those in $\mathcal{H}^\infty(B_V)$ do not have this restriction. 

We now apply the previous theorem to produce an operator space predual of $\Hcb(B_V)$.

\begin{theorem}
For any operator space $V$ the space $\Hcb(B_V)$ is a dual operator space, and in fact $\Gcb(B_V) = \overline{\spa}\{ \delta_V(x) \}_{x \in B_V} \subseteq \Hcb(B_V)'$ is an operator space predual of $\Hcb(B_V)$.
\end{theorem}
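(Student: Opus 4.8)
The plan is to apply Theorem \ref{thm-Kaijser} with $Y = \{\delta_V(x) : x \in B_V\} \subseteq \Hcb(B_V)'$, so that the predual is automatically identified as $\Gcb(B_V) = \overline{\spa}\, Y$. Thus the work reduces to verifying the two hypotheses of that theorem for the operator space $\Hcb(B_V)$ and this particular set $Y$.

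For hypothesis (a), I need that the evaluations $\delta_V(x)$ separate points of $\Hcb(B_V)$. This is immediate: if $f, g \in \Hcb(B_V)$ satisfy $\delta_V(x)(f) = \delta_V(x)(g)$ for all $x \in B_V$, then $f(x) = g(x)$ for all $x$, so $f = g$.

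The heart of the matter is hypothesis (b): for each $n \in \N$, the closed unit ball $\overline{B}_{M_n(\Hcb(B_V))}$ must be compact in the entrywise $\sigma(\Hcb(B_V), Y)$-topology. Using the operator space structure \eqref{eqn-oss-for-Hcb} together with Remark \ref{remark-Hcb-in-terms-of-H}, an element of $\overline{B}_{M_n(\Hcb(B_V))}$ is a matrix $(f_{k\ell})$ of scalar holomorphic functions such that all the amplified matrix-valued maps $((f_{k\ell}))_m : B_{M_m(V)} \to M_{mn}$ are bounded by $1$. The $\sigma(\Hcb(B_V), Y)$-topology on $M_n(\Hcb(B_V))$ is precisely the topology of pointwise convergence on $B_V$ of each entry (the entrywise $\sigma$-topology pairs a matrix $(f_{k\ell})$ with a matrix of evaluations, which just reads off the values $f_{k\ell}(x)$). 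I would realize $\overline{B}_{M_n(\Hcb(B_V))}$ as a closed subset of the compact product space $\prod_{x \in B_V} \overline{B}_{M_n}$ — here the bound on each $f_{k\ell}(x)$ comes from Corollary \ref{corollary-Schwarz}(a), since $\n{(f_{k\ell}(x))}_{M_n} \le \n{(f_{k\ell})}_{M_n(\Hcb)} \cdot \n{x}_V \le 1$ — and then check that this subset is closed. Closedness has two parts: first, a pointwise limit of a net in $\overline{B}_{M_n(\Hcb(B_V))}$ must itself consist of holomorphic functions with the correct amplification bounds; and second, that limit must actually lie in $\overline{B}_{M_n(\Hcb(B_V))}$. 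For the holomorphy, I would note that the functions $(f_{k\ell})$ in the ball are uniformly bounded on $B_V$ (by $1$ as scalar functions, using Corollary \ref{corollary-Schwarz}(a)), hence form a normal family by Montel-type arguments, so pointwise limits are holomorphic; more directly, one uses a Weierstrass-type theorem as in Corollary \ref{corollary-Schwarz}(c) after upgrading pointwise convergence on the bounded net to uniform convergence on compact subsets of $B_V$ (on each ball $rB_V$ with $r<1$, uniform boundedness plus Cauchy estimates give equicontinuity). For the amplification bounds: if $(f_{k\ell}^{[\lambda]}) \to (f_{k\ell})$ pointwise on $B_V$, then for any fixed $m$ and any $(x_{ij}) \in B_{M_m(V)}$, the matrices $(f_{k\ell}^{[\lambda]}(x_{ij})) \in M_{mn}$ converge entrywise, hence in norm (finite dimensions), to $(f_{k\ell}(x_{ij}))$, which therefore has norm $\le 1$; taking the supremum over $m$ and $(x_{ij})$ shows $\n{(f_{k\ell})}_{M_n(\Hcb(B_V))} \le 1$.

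I expect the main obstacle to be the holomorphy of the pointwise limit — making the passage from "pointwise limit of a net of uniformly bounded holomorphic functions" to "holomorphic" rigorous in the Banach/operator-space-domain setting, where one cannot quote the one-variable Montel theorem verbatim. The clean route is the one already used in the proof of Corollary \ref{corollary-Schwarz}(c): combine uniform boundedness on $B_V$ with Cauchy's integral estimates to get that the net is uniformly Cauchy on each $rB_V$, then invoke the vector-valued Weierstrass theorem \cite[Th. 2.13]{defant2019dirichlet} (here with values in $\C$). Once holomorphy is in hand, everything else is the bookkeeping described above, and Theorem \ref{thm-Kaijser} delivers both the dual operator space structure and the identification of the predual as $\Gcb(B_V)$.
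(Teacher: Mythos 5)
Your proposal is correct and follows essentially the same route as the paper: apply Theorem \ref{thm-Kaijser} with $Y=\{\delta_V(x)\}_{x\in B_V}$, note that separation of points is immediate, and establish entrywise pointwise compactness of each matrix unit ball by passing bounds on $\big(f_{k\ell}(x_{ij})\big)$ to a pointwise limit. The only difference is one of packaging: where you unpack the holomorphy of the pointwise limit via Cauchy estimates and a Weierstrass-type theorem, the paper simply extracts a pointwise-convergent subnet using the known compactness of the closed unit ball of $\mathcal{H}^\infty(B_V)$ in the topology of pointwise convergence, which encapsulates exactly that argument.
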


\begin{proof}
Let $Y =\{ \delta_V(x) \}_{x \in B_V} \subseteq  \Hcb(B_V)'$. It is clear that it separates points of $\Hcb(B_V)$, and the $\sigma(\Hcb(B_V), Y)$-topology is the topology of pointwise convergence.
Let $(f^{[\alpha]})_{\alpha\in A}$ be a net in the closed unit ball of $\Hcb(B_V)$. In particular it is in the closed unit ball of  $\mathcal{H}^\infty(B_V)$, so it has a subnet $(f^{[\gamma]})_{\gamma\in\Gamma}$ that converges pointwise to an $f$ in the closed unit ball of $\mathcal{H}^\infty(B_V)$. It follows from the pointwise convergence that $\n{f_{n_x}(x)}_{M_{n_x}} \le 1$ for any $x \in \mathbf{B}_V$, and therefore $f$ belongs to the closed unit ball of $\Hcb(B_V)$.
This shows that condition $(b)$ in Theorem \ref{thm-Kaijser} holds for $n=1$.

For $n>1$, a similar argument shows that given a net in $B_{M_n(\Hcb(B_V))}$ we can find a subnet which converges pointwise.
By \eqref{eqn-oss-for-Hcb}, the pointwise limit of this subnet will belong to $B_{M_n(\Hcb(B_V))}$, which shows that $B_{M_n(\Hcb(B_V))}$ is compact for the entrywise $\sigma(\Hcb(B_V), Y)$-topology.
Thus, by Theorem \ref{thm-Kaijser},  $\Gcb(B_V)$ is an operator space predual of $\Hcb(B_V)$.
\end{proof}

In the theorem above we have found an operator space predual of $\Hcb(B_V)$ using very abstract tools, but in the next section we will show an alternative description of its operator space structure that is perhaps more explicit (see Propositions \ref{proposition-ball-is-convex-hull-of-image-of-delta} and \ref{prop-description-oss-Gcb}).

Now that we have found a predual for $\Hcb(B_V)$ we show that, as in the typical linearization procedures for Banach spaces, the evaluation map $\delta_V : B_V \to \Gcb(B_V)$ belongs to the class of functions being linearized: it is a completely bounded holomorphic function (and it has norm one).

\begin{proposition}\label{prop-delta-is-cb-holomorphic}
Let $V$ be an operator space.
The evaluation map $\delta_V : B_V \to \Gcb(B_V)$ given by $x \mapsto \delta_V(x)$
is a completely bounded holomorphic function, and furthermore the norm of $\delta_V$ in $\Hcb(B_V; \Gcb(B_V))$ is 1.
More precisely, $\delta_V$ is completely isometric in the sense that for all $(x_{ij})\in M_n(V)$ we have
\[
\n{(\delta_Vx_{ij})}_{M_n(\Gcb(B_V))} = \n{(x_{ij})}_{M_n(V)}.
\]
\end{proposition}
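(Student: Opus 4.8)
\emph{Plan.} The plan is to first check that $\delta_V : B_V \to \Gcb(B_V)$ is holomorphic as a map into the Banach space $\Gcb(B_V)$, and then to compute the norms of the amplifications $(\delta_V)_n$ through the duality $\Gcb(B_V)^* = \Hcb(B_V)$ established in the previous theorem. For holomorphy: since $\Gcb(B_V)$ is an operator space predual of $\Hcb(B_V)$, every continuous linear functional on $\Gcb(B_V)$ is of the form $\psi \mapsto \psi(f)$ for some $f \in \Hcb(B_V)$; for each such $f$ the composition $x \mapsto \langle \delta_V(x), f\rangle = f(x)$ is holomorphic on $B_V$ by the definition of $f$, and $\delta_V$ is bounded since $\n{\delta_V(x)} = \n{x}$. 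By the standard fact that a bounded, scalarly holomorphic map into a Banach space is holomorphic \cite{Mujica-book}, $\delta_V$ is holomorphic, and by Lemma \ref{lemma-amplifications-are-holomorphic}(b) its amplifications are holomorphic with $(\delta_V)_n\big((x_{ij})\big) = \big(\delta_V(x_{ij})\big)$.

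For the norm identity, fix $n$ and $(x_{ij}) \in B_{M_n(V)}$ (so each $x_{ij} \in B_V$ and $\delta_V(x_{ij}) \in \Gcb(B_V)$ makes sense) and set $u = \big(\delta_V(x_{ij})\big) \in M_n(\Gcb(B_V))$. Using that $\Gcb(B_V)$ embeds completely isometrically into its bidual $\Hcb(B_V)^*$, together with the identification $M_n(\Hcb(B_V)^*) = \CB(\Hcb(B_V), M_n)$ and the computation of the $\cb$-norm through amplifications on matrix unit balls, one obtains
\[
\n{u}_{M_n(\Gcb(B_V))} = \sup\big\{ \n{\big(f_{kl}(x_{ij})\big)}_{M_{mn}} \;:\; m\in\N,\ (f_{kl}) \in B_{M_m(\Hcb(B_V))} \big\},
\]
where we used $\langle \delta_V(x_{ij}), f_{kl}\rangle = f_{kl}(x_{ij})$ and the matrix $\big(f_{kl}(x_{ij})\big)$ is the one appearing in \eqref{eqn-oss-for-Hcb} (up to a harmless permutation).

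The inequality ``$\le \n{(x_{ij})}_{M_n(V)}$'' is then immediate from the matrix Schwarz inequality Corollary \ref{corollary-Schwarz}(b): each term above is at most $\n{(f_{kl})}_{M_m(\Hcb(B_V))}\,\n{(x_{ij})}_{M_n(V)} \le \n{(x_{ij})}_{M_n(V)}$. For the reverse inequality, recall from Remark \ref{remark-linear-cb-is-holomorphic-cb} that the restriction map embeds $V' = \CB(V,\C)$ completely isometrically into $\Hcb(B_V)$; restricting the above supremum to the functions $(\varphi_{kl})$ with $(\varphi_{kl}) \in B_{M_m(V')}$, and using that a linear functional viewed as a holomorphic function evaluates to its linear value, gives
$
\n{u}_{M_n(\Gcb(B_V))} \ge \sup\{ \n{(\varphi_{kl}(x_{ij}))}_{M_{mn}} : m\in\N,\ (\varphi_{kl}) \in B_{M_m(V')}\} = \n{(x_{ij})}_{M_n(V)},
$
the last equality being the standard operator space duality coming from the complete isometry $V \hookrightarrow V''$. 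This proves the displayed identity, and from it $\n{\delta_V}_{\Hcb(B_V;\Gcb(B_V))} = \sup\{\n{(x_{ij})}_{M_n(V)} : n\in\N,\ (x_{ij}) \in B_{M_n(V)}\} = 1$ and the complete isometry of $\delta_V$ follow at once (the identity extending to $\overline{B}_{M_n(V)}$ by continuity of $\delta_V$).

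I expect no serious obstacle: the one genuine step is recognizing that the matrix norm of $u$ in the predual is computed by pairing against $B_{M_m(\Hcb(B_V))}$, after which the upper bound is exactly Corollary \ref{corollary-Schwarz}(b) and the lower bound holds because $V' \hookrightarrow \Hcb(B_V)$ already detects the full matrix norm of $(x_{ij})$. The mildest care is needed with the index bookkeeping in the $M_{mn}$-matrices (a permutation similarity), and -- if one insists on the identity literally for all $(x_{ij}) \in M_n(V)$ with entries in $B_V$ rather than for $(x_{ij}) \in B_{M_n(V)}$ -- on an additional factorization argument to control the supremum when $(x_{ij})$ lies outside the matrix unit ball.
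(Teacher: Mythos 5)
Your proof is correct and follows essentially the same route as the paper's: weak holomorphy via the duality $\Gcb(B_V)' = \Hcb(B_V)$, the upper bound on $\n{(\delta_Vx_{ij})}_{M_n(\Gcb(B_V))}$ via the Schwarz-lemma estimate (the paper re-runs the argument of Corollary \ref{corollary-Schwarz} rather than citing part (b) directly, but it is the same estimate), and the lower bound by restricting the pairing to $B_{M_m(V')}$. The differences are only cosmetic, and your closing remark about matrices with entries in $B_V$ that lie outside $B_{M_n(V)}$ is a fair observation that the paper's own proof also leaves implicit.
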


\begin{proof}
The fact that $\delta_V$ is holomorphic follows analogously as in \cite[Prop. 2.5(a)]{aron2023linearization}: $\delta_V$ is weakly holomorphic because for every $f \in \Gcb(B_V)' = \Hcb(B_V)$
we have $f \circ \delta_V = f$ is holomorphic, and hence $\delta_V$ is holomorphic \cite[Thm. 8.12]{Mujica-book}.
Consider now a matrix $(x_{ij}) \in B_{M_n(V)}$.
Then, from the duality $\Gcb(B_V)' = \Hcb(B_V)$,
\begin{multline*}
\n{(\delta_Vx_{ij})}_{M_n(\Gcb(B_V))} = 
\sup \left\{ \n{( \delta_V(x_{ij})f_{kl} )}_{M_{nm}(W)} \;:\; (f_{kl}) \in B_{M_m(\Hcb(B_V))} \right\}
= \\
\sup \left\{ \n{( f_{kl}(x_{ij}) )}_{M_{nm}(W)} \;:\; (f_{kl}) \in B_{M_m(\Hcb(B_V))} \right\}.
\end{multline*}
Note that above we can understand $f = (f_{kl}) \in B_{M_m(\Hcb(B_V))}$ as a single holomorphic function $f : B_V \to M_m(W)$ by the same argument as in Lemma \ref{lemma-amplifications-are-holomorphic}. Therefore, by that same lemma, its $n$-th amplification $f_n$ is a holomorphic function $B_{M_n(V)} \to M_{nm}(W)$. Since $f_n$ takes the value $0$ at $0$, by the same argument as in the proof of Corollary \ref{corollary-Schwarz} it follows from Schwarz's lemma \cite[Thm. 7.19]{Mujica-book} that $\n{( f_{kl}(x_{ij}) )}_{M_{nm}(W)} \le \n{(x_{ij})}_{M_n(V)}$ (note here we are using that $(f_{kl}) \in B_{M_m(\Hcb(B_V))}$). 
Therefore, we conclude that
\[
\n{(\delta_Vx_{ij})}_{M_n(\Gcb(B_V))} \le \n{(x_{ij})}_{M_n(V)}.
\]
The reverse inequality is clear from the fact that
\[
\n{(x_{ij})}_{M_n(V)} = \sup \{ \n{\big( y_{kl}'(x_{ij})\big)}_{M_{nm}} \;:\; (y_{kl}') \in B_{M_m(V')} \}
\]
and Remark \ref{remark-linear-cb-is-holomorphic-cb}.
\end{proof}

\begin{remark}
In the Banach space setting, the Dixmier-Ng-Kaijser construction can be used to prove linearization results for a number of classes of vector-valued functions (bounded, holomorphic, Lipschitz, holomorphic Lipschitz, etc.).
Similarly, in an upcoming work  \cite{CDD-futuro} we use our Theorem \ref{thm-Kaijser} to prove other linearization results for analogous classes defined on operator spaces such as the completely Lipschitz maps in small scale from \cite{braga2024small} .
\end{remark}

\section{The predual of $\Hcb(B_V)$: de Leeuw's map approach}\label{Sect:De Leeuw}

Recall that $\Hcb(B_V)$ can be completely isometrically isomorphically identified with the image of the map $\Phi_\C : \Hcb(B_V) \to  \ell_\infty\{ M_{n_x} \,:\, x \in \mathbf{B}_V \}$, and note that the latter space is the dual of $\ell_1\{ S_1^{n_x} \,:\, x \in \mathbf{B}_V \}$.

\begin{lemma}\label{prop-de-Leeuws-map}
For any operator space $V$,
the image of $\Phi_\C$ is a weak$^*$ closed subspace of  $\ell_\infty\{ M_{n_x} \,:\, x \in \mathbf{B}_V \}$.
\end{lemma}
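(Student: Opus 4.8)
The plan is to show that the image of $\Phi_\C$ is weak$^*$ closed by verifying that it is closed under bounded weak$^*$ limits and then invoking the Krein--Smulian theorem. Concretely, suppose $(\Phi_\C(f^{[\alpha]}))_\alpha$ is a bounded net in the image, say $\n{f^{[\alpha]}}_{\Hcb} \le C$ for all $\alpha$, which converges weak$^*$ in $\ell_\infty\{ M_{n_x} \,:\, x \in \mathbf{B}_V \}$ to some element $\xi = (\xi_x)_{x\in\mathbf{B}_V}$. Testing against the coordinate functionals coming from the predual $\ell_1\{ S_1^{n_x} \,:\, x \in \mathbf{B}_V \}$, weak$^*$ convergence forces $f^{[\alpha]}_{n_x}(x) \to \xi_x$ in $M_{n_x}$ for every $x \in \mathbf{B}_V$; in particular, looking at the singleton entries ($n_x = 1$, i.e. $x \in B_V$) we get pointwise convergence of the net $(f^{[\alpha]})$ on $B_V$.

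Next I would identify the pointwise limit as a function in $\Hcb(B_V)$. Since each $f^{[\alpha]}$ lies in the closed ball of radius $C$ of $\mathcal H^\infty(B_V)$, and this net converges pointwise on $B_V$, a Weierstrass-type / normal-families argument (exactly as used in the proof that $\Gcb(B_V)$ is a predual, or via \cite[Thm. 2.13]{defant2019dirichlet}) shows the pointwise limit $f$ is holomorphic with $\n{f}_{\mathcal H^\infty} \le C$. Then, because for each $x \in \mathbf{B}_V$ the values $f^{[\alpha]}_{n_x}(x)$ are bounded by $C$ in $M_{n_x}$ and converge to $\xi_x$, we conclude $\xi_x = f_{n_x}(x)$ and $\n{f_{n_x}(x)}_{M_{n_x}} \le C$, so by \eqref{eqn-oss-for-Hcb} (level $n=1$) we have $f \in \Hcb(B_V)$ with $\n{f}_{\Hcb} \le C$. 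Hence $\xi = \Phi_\C(f)$ lies in the image.

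Having shown the image is weak$^*$ closed on bounded sets, I would finish by applying the Krein--Smulian theorem: a convex subset of a dual Banach space is weak$^*$ closed provided its intersection with every closed ball is weak$^*$ closed. The image of $\Phi_\C$ is a linear subspace (hence convex), and we have just checked that its intersection with the ball of radius $C$ is weak$^*$ closed for every $C > 0$ — indeed, any net in that intersection has a weak$^*$-convergent subnet (by weak$^*$ compactness of the ball) whose limit we showed stays in the image and in the ball. Therefore the image is weak$^*$ closed.

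The main obstacle is the middle step: one must be careful that pointwise convergence of the scalar functions $f^{[\alpha]}$ on $B_V$ together with the uniform bound actually yields a holomorphic limit and, more delicately, that the limit is \emph{completely} bounded rather than merely bounded. The first part is the standard Weierstrass theorem for $\mathcal H^\infty$; the second is handled precisely because weak$^*$ convergence in $\ell_\infty\{ M_{n_x} \}$ gives convergence of \emph{all} the matrix coordinates $f^{[\alpha]}_{n_x}(x)$, not just the scalar ones, so the norm control at every matrix level $n_x$ passes to the limit. It is worth remarking that the holomorphy of the amplifications $f_{n_x}$ is automatic once $f$ itself is holomorphic (Lemma \ref{lemma-amplifications-are-holomorphic}), so no separate argument is needed there.
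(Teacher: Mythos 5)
Your proposal is correct and follows essentially the same route as the paper: reduce via Krein--Smulian to showing the (bounded) ball of the image is weak$^*$ closed, observe that weak$^*$ convergence in $\ell_\infty\{ M_{n_x}\}$ gives convergence of all matrix coordinates and in particular pointwise convergence on $B_V$, identify the limit as a bounded holomorphic function by a normal-families/Weierstrass argument, and pass the matrix-level norm bounds to the limit to conclude it lies in $\Hcb(B_V)$. The only cosmetic difference is that the paper quotes pointwise compactness of the ball of $\mathcal H^\infty(B_V)$ where you invoke a Weierstrass-type theorem; these are interchangeable here.
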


\begin{proof}
By the Krein--Smulian theorem, it suffices to check that the unit ball of the image of $\Phi_\C$ is weak$^*$ closed.
Let $(f^{[\alpha]})_{\alpha\in A}$ be a net in the unit ball of $\Hcb(B_V)$ so that $(\Phi_\C(f^{[\alpha]}))_{\alpha\in A}$ converges in the weak$^*$ topology of $\ell_\infty\{ M_{n_x} \,:\, x \in \mathbf{B}_V \}$ to some limit $y$.
In particular, the net $(f^{[\alpha]})_{\alpha\in A}$ converges pointwise to a function $f : B_V \to \C$. Since this net is bounded in $\Hcb(B_V)$, it is also bounded in $\mathcal{H}^\infty(B_V)$ and therefore $f$ is a bounded holomorphic function (because the closed unit ball of $\mathcal{H}^\infty(B_V)$ is compact for the topology of pointwise convergence).
Now, fix $x \in \mathbf{B}_V$. Since $\big( f^{[\alpha]}_{n_x}(x)\big)_{\alpha \in A}$ is a net in $\overline{B}_{M_{n_x}}$ converging pointwise to $f_{n_x}(x)$, we conclude that $f_{n_x}(x) \in \overline{B}_{M_{n_x}}$, so $f$ belongs to the unit ball of $\Hcb(B_V)$. Clearly $y = \Phi_\C(f)$, yielding the desired result.
\end{proof}

The above lemma gives an alternative way to find an abstract predual for $\Hcb(B_V)$, as shown below.
Note that analogous results hold in the classical setting for $\mathcal{H}^\infty(B_V)$ \cite[Sec. 2]{Galindo-Gamelin-Lindstrom}.

\begin{corollary}\label{corollary-predual}
The space $\Hcb(B_V)$ is a dual operator space.
Moreover, on bounded subsets of $\Hcb(B_V)$ the weak$^*$ topology coincides with the topology of pointwise convergence.    
\end{corollary}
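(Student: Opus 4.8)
The plan is to combine Lemma \ref{prop-de-Leeuws-map} with the standard duality machinery. First I would invoke Lemma \ref{prop-de-Leeuws-map} to see that $\Phi_\C(\Hcb(B_V))$ is a weak$^*$-closed subspace of the dual operator space $\ell_\infty\{M_{n_x} : x \in \mathbf{B}_V\} = \big(\ell_1\{S_1^{n_x} : x \in \mathbf{B}_V\}\big)'$. A weak$^*$-closed subspace $E$ of a dual operator space $Z'$ is itself a dual operator space, with operator space predual $Z/E_\perp$ where $E_\perp = \{z \in Z : \langle z, e\rangle = 0 \text{ for all } e \in E\}$; the operator space structure on $Z/E_\perp$ is the quotient one, and the weak$^*$ topology on $E$ induced by this predual agrees with the restriction of the weak$^*$ topology of $Z'$. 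Since $\Phi_\C$ is a completely isometric isomorphism onto its image, transporting this structure back gives that $\Hcb(B_V)$ is a dual operator space.

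For the second assertion, I would argue that under the identification via $\Phi_\C$, the weak$^*$ topology on $\Hcb(B_V)$ (coming from the predual just constructed) is the restriction of the weak$^*$ topology of $\ell_\infty\{M_{n_x} : x \in \mathbf{B}_V\}$. The latter is the topology of pointwise convergence of the entries $f_{n_x}(x)$ for $x \in \mathbf{B}_V$; in particular, restricted to $x \in B_V$ (the level $n=1$ entries), it is finer than the topology of pointwise convergence on $B_V$ in the usual sense, and evaluating at matrix points is determined by evaluating at scalar points (as $f_{n_x}(x) = (f(x_{ij}))$). On a bounded subset $S$ of $\Hcb(B_V)$, the map sending $f$ to the tuple $(f(x))_{x \in B_V}$ into a product of bounded discs is a continuous injection from the weak$^*$ topology to the pointwise topology; since a bounded subset of $\ell_\infty\{M_{n_x}\}$ has weak$^*$-compact closure by Banach--Alaoglu and the pointwise topology is Hausdorff, a continuous bijection between a compact space and a Hausdorff space is a homeomorphism, so the two topologies coincide on $\overline{S}^{w^*}$ and hence on $S$.

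The main obstacle is the second part: making precise that the weak$^*$ topology transported from the abstract predual $Z/E_\perp$ coincides with pointwise convergence. This requires one to identify the functionals in the predual $\ell_1\{S_1^{n_x}\}$ concretely — in particular, to note that the point-evaluation functionals (and their matrix amplifications via trace pairings against $S_1^{n_x}$) lie in $\ell_1\{S_1^{n_x}\}$ and that their span is weak$^*$-dense enough to generate the topology. The compactness argument (Banach--Alaoglu plus Hausdorff) handles the passage cleanly once one observes that on bounded sets it suffices to test against a family of functionals that separates points and generates the weak$^*$ topology, namely the coordinate functionals. The routine verifications — that $\Phi_\C$ is completely isometric, already established before the lemma, and that quotients of operator space preduals behave as expected — can be cited rather than reproved.
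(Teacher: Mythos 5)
Your proposal is correct and follows essentially the same route as the paper: Lemma \ref{prop-de-Leeuws-map} combined with the standard fact that a weak$^*$-closed subspace of a dual operator space is a dual operator space with the quotient predual (the paper cites \cite[Prop.\ 4.2.2]{Effros-Ruan-book}), followed by the observation that the induced weak$^*$ topology is the restriction of that of $\ell_\infty\{M_{n_x} : x \in \mathbf{B}_V\}$ and the compact-versus-Hausdorff comparison on bounded sets. The ``main obstacle'' you flag about identifying the predual functionals concretely is not actually needed once one knows the weak$^*$ topology is inherited from the ambient $\ell_\infty$ space, which is exactly how the paper (and, in the end, you) dispose of it.
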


\begin{proof}
By the previous lemma and \cite[Prop. 4.2.2]{Effros-Ruan-book}, $\Hcb(B_V)$ is a dual operator space. Moreover, the weak$^*$ topology on  $\Hcb(B_V)$ is the restriction of the weak$^*$ topology from $\ell_\infty\{ M_{n_x} \,:\, x \in \mathbf{B}_V \}$. In particular, convergence in the weak$^*$ topology of  $\Hcb(B_V)$ implies pointwise convergence.
But on the unit ball of $\Hcb(B_V)$ the weak$^*$ topology is compact and the topology of pointwise convergence is Hausdorff, so by standard basic topology the topologies must coincide.
\end{proof}

Note that the map $\Phi_\C$ is playing a role analogous to de Leeuw's map in the context of Lipschitz free spaces (see \cite[Sec. 2.1]{Weaver-Lipschitz-algebras}), hence the name of this section.

We will now show that the predual appearing in the proof above coincides with the one we had previously found with the Dixmier-Ng-Kaijser machinery.
Let $H_{\perp}$ be the preannihilator of $\Phi_\C(\Hcb(B_V))$ in $\ell_1\{ S_1^{n_x} \,:\, x \in \mathbf{B}_V \}$, that is,
\[
H_{\perp} = \big\{ y \in \ell_1\{ S_1^{n_x} \,:\, x \in \mathbf{B}_V \} \;:\; \pair{\Phi_C(f)}{y} = 0 \text{ for all } f \in \Hcb(B_V) \big\}.
\]
Then the predual $\mathscr{G}$ appearing in the proof of Corollary \ref{corollary-predual} is given as
\[
\mathscr{G} = \ell_1\{ S_1^{n_x} \,:\, x \in \mathbf{B}_V \}/H_{\perp}.
\]
Let us denote by $Q$ the associated complete $1$-quotient 
\[
Q :  \ell_1\{ S_1^{n_x} \,:\, x \in \mathbf{B}_V \} \to \mathscr{G}.
\]
Since $\mathscr{G}' = \Hcb(B_V)$, we can completely isometrically isomorphically identify $\mathscr{G}$ with a subspace of $\Hcb(B_V)'$. We find this subspace next.

\begin{proposition}
For any operator space $V$, the space $\mathscr{G}$ can be completely isometrically isomorphically identified with $\mathcal{G}^\infty_{\cb}(B_V)=\overline{\spa}\{ \delta_V(x) \}_{x \in B_V} \subseteq \Hcb(B_V)'$.
\end{proposition}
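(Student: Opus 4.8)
The plan is to identify the two candidate preduals directly as subspaces of $\Hcb(B_V)'$. Recall that the canonical embedding of any operator space into its bidual is a complete isometry; hence, under the identification $\mathscr{G}' = \Hcb(B_V)$, the space $\mathscr{G}$ sits completely isometrically inside $\mathscr{G}'' = \Hcb(B_V)'$. Since the operator space structure on $\Gcb(B_V)$ produced by the Dixmier--Ng--Kaijser construction (Theorem~\ref{thm-Kaijser}) is, by definition, the one it inherits as a subspace of $\Hcb(B_V)'$, it suffices to show that these two subspaces of $\Hcb(B_V)'$ coincide. In particular, no appeal to uniqueness of operator space preduals (which can fail) is needed.

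For each $x \in \mathbf{B}_V$ and each $\sigma \in S_1^{n_x}$, write $e_x \otimes \sigma$ for the element of $\ell_1\{ S_1^{n_x} \,:\, x \in \mathbf{B}_V\}$ supported in the $x$-th coordinate. Finitely supported elements are dense in this $\ell_1$-sum, so the linear span of the $e_x \otimes \sigma$ is dense; as $Q$ is a surjection, $\mathscr{G}$ is the closed linear span of $\{ Q(e_x\otimes\sigma) \}$ (the closure taken in $\mathscr{G}$, equivalently in $\Hcb(B_V)'$, where the image of $\mathscr{G}$ is closed). I would then compute the action of $Q(e_x \otimes \sigma)$ as a functional on $\Hcb(B_V)$ through the duality $\mathscr{G}' = \Phi_\C(\Hcb(B_V))$: if $x = (x_{ij}) \in B_{M_n(V)}$, then for $f \in \Hcb(B_V)$,
\[
\pair{f}{Q(e_x\otimes\sigma)} = \pair{\Phi_\C(f)}{e_x \otimes\sigma} = \tr\big( f_n(x)\,\sigma\big) = \sum_{i,j=1}^n \sigma_{ji}\, f(x_{ij}),
\]
so that $Q(e_x\otimes\sigma) = \sum_{i,j} \sigma_{ji}\,\delta_V(x_{ij})$ in $\Hcb(B_V)'$, where each $\delta_V(x_{ij})$ makes sense because $\n{x_{ij}}_V \le \n{(x_{ij})}_{M_n(V)} < 1$. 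Hence $\spa\{ Q(e_x\otimes\sigma) \} \subseteq \spa\{ \delta_V(y) \}_{y\in B_V}$. For the reverse inclusion, taking $x \in B_V$ at the first matrix level and $\sigma = 1 \in \C = S_1^1$ gives $Q(e_x\otimes 1) = \delta_V(x)$. Taking closures in $\Hcb(B_V)'$ yields $\mathscr{G} = \overline{\spa}\{ \delta_V(y)\}_{y\in B_V} = \Gcb(B_V)$, completely isometrically.

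The argument is bookkeeping rather than substance; the one point deserving care is keeping the several dualities aligned --- that $\mathscr{G}' = \Hcb(B_V)$ is the identification implemented by $\Phi_\C$, and that under the canonical embedding $\mathscr{G} \hookrightarrow \mathscr{G}'' = \Hcb(B_V)'$ the class $Q(e_x\otimes 1)$ is sent to the evaluation functional $\delta_V(x)$, both of which hold by construction.
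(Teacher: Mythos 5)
Your proposal is correct and follows essentially the same route as the paper's own proof: identify $\mathscr{G}$ with a subspace of $\Hcb(B_V)'$ via the canonical (completely isometric) embedding into its bidual, compute $Q(e_x\otimes\sigma)$ as a finite linear combination of evaluations $\delta_V(x_{ij})$ using the duality implemented by $\Phi_\C$, and use density of finitely supported elements to conclude that the two closed spans coincide. The only differences are cosmetic (the transpose convention in the $S_1^{n}$--$M_{n}$ trace pairing, and making explicit that $\n{x_{ij}}_V<1$), so nothing further is needed.
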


\begin{proof}
First we will show that for each $x_0\in B_V$, the evaluation map $\delta_V(x_0)$ belongs to the subspace of $\Hcb(B_V)'$ which is identified with $\mathscr{G}$.
To that end, let $e_{x_0} \in \ell_1\{ S_1^{n_x} \,:\, x \in \mathbf{B}_V \}$ be the element that has a 1 in the $x_0$ coordinate (note that the space $S_1^{n_{x_0}}$ is one-dimensional) and $0$'s everywhere else. Then for every $f \in \Hcb(B_V)$,
\[
\pair{Qe_{x_0}}{f} = \pair{e_{x_0}}{\Phi_\C(f)} = f(x_0),
\]
that is, $Qe_{x_0}$ corresponds to $\delta_V(x_0)$, and therefore $\spa\{ \delta_V(x) \}_{x \in B_V}$ is contained in the subspace of $\Hcb(B_V)'$ which is identified with $\mathscr{G}$.
Next, observe that the finitely supported vectors are dense in $\ell_1\{ S_1^{n_x} \,:\, x \in \mathbf{B}_V \}$ and therefore their images under $Q$ are dense in $\mathscr{G}$.
Therefore, it suffices to check that for an $x \in \mathbf{B}_V$ and a matrix $ a = (a_{ij})_{i,j=1}^{n_{x}} \in S_1^{n_{x}}$, the element $y \in \ell_1\{ S_1^{n_x} \,:\, x \in \mathbf{B}_V \} $ having $a$ in the $x$-coordinate and $0$'s everywhere else gets mapped by $Q$ inside $\mathscr{G}$. Now, for every $f \in \Hcb(B_V)$ we have
\[
\pair{Qy}{f} = \pair{y}{\Phi_C(f)} = \sum_{i,j=1}^{n_{x}} a_{ij} f(x_{ij})
\]
that is, the element of $\Hcb(B_V)'$ corresponding to $Qy$ is $ \sum_{i,j=1}^{n_{x}} a_{ij} \delta_V(x_{ij}) \in \mathcal{G}^\infty_{\cb}(B_V)$ 
The desired conclusion follows.
\end{proof}

A typical issue in classical linearization procedures is to describe the unit ball of the predual of the space of functions as the closed absolutely convex hull of the elementary elements. In our setting this kind of result holds for the matrix unit ball. We present two alternative descriptions.

\begin{proposition}\label{proposition-ball-is-convex-hull-of-image-of-delta}
Let $V$ be an operator space.
Let $\mathbf{A} = (A_n)_{n=1}^\infty$ be the matrix set over $\Gcb(B_V)$ given by, for each $n\in\N$, $A_n = \big\{ ((\delta_V)_n(x)) \;:\; x \in B_{M_n(V)}\big\}$.
Then the closed matrix unit ball of $\Gcb(B_V)$ is the closure of the absolutely matrix convex hull of $\mathbf{A}$.

Moreover, if $\widetilde{\mathbf{A}} = (\widetilde{A}_n)_{n=1}^\infty$ is the matrix set over $\Gcb(B_V)$ given by $\widetilde{A}_n = \big\{ (\delta_V)_n(x)/\n{x} \;:\; x \in B_{M_n(V)} \setminus \{0\}\big\}$ for each $n$, then we also have that the closed matrix unit ball of $\Gcb(B_V)$ is the closure of the absolutely matrix convex hull of $\widetilde{\mathbf{A}}$.
\end{proposition}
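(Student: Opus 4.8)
The plan is to prove both descriptions of the closed matrix unit ball $\overline{\mathbf{B}}_{\Gcb(B_V)}$ simultaneously, using the Hahn--Banach separation theorem for matrix convexity (Theorem \ref{thm-Hahn-Banach-matrix-convexity}) to establish the nontrivial containment, and then observing that the two descriptions differ only by a harmless rescaling. Write $\mathbf{C} = \overline{\amconv(\mathbf{A})}$ and $\widetilde{\mathbf{C}} = \overline{\amconv(\widetilde{\mathbf{A}})}$. The easy direction is $\mathbf{C} \subseteq \overline{\mathbf{B}}_{\Gcb(B_V)}$ and $\widetilde{\mathbf{C}} \subseteq \overline{\mathbf{B}}_{\Gcb(B_V)}$: by Proposition \ref{prop-delta-is-cb-holomorphic} each element $(\delta_V)_n(x)$ with $x\in B_{M_n(V)}$ satisfies $\n{(\delta_V)_n(x)}_{M_n(\Gcb(B_V))} = \n{x}_{M_n(V)} \le 1$, so $\mathbf{A} \subseteq \overline{\mathbf{B}}_{\Gcb(B_V)}$ (and similarly $\widetilde{\mathbf{A}} \subseteq \overline{\mathbf{B}}_{\Gcb(B_V)}$ after dividing by the norm); since $\overline{\mathbf{B}}_{\Gcb(B_V)}$ is closed and absolutely matrix convex, it contains the closed absolutely matrix convex hull. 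For the reverse inclusion it suffices to treat one of the two sets, say $\mathbf{A}$, since $\mathbf{A}\subseteq\widetilde{\mathbf{A}}$ (note $(\delta_V)_n(x) = \n{x}\cdot\big((\delta_V)_n(x)/\n{x}\big)$ exhibits each element of $\mathbf{A}$ as a $\le 1$ scalar multiple of an element of $\widetilde{\mathbf{A}}$, and conversely each element of $\widetilde{\mathbf{A}}$ is a limit — or even equal, when we also allow scaling $x$ up toward the unit sphere — of elements accessible from $\mathbf{A}$ via the absolutely matrix convex operations), so $\amconv(\mathbf{A})$ and $\amconv(\widetilde{\mathbf{A}})$ have the same closure. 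I would spell this last comparison out carefully: using Lemma \ref{lemma-characterization-abs-mat-convex-hull}, an element $\alpha\, (\delta_V)_k(x)\,\beta$ of $\amconv(\widetilde{\mathbf A})$ built from $x/\n{x}$ equals $\alpha'\,(\delta_V)_k(x)\,\beta'$ with $\alpha' = \alpha/\sqrt{\n x}$, $\beta' = \beta/\sqrt{\n x}$, which is a legitimate element of $\amconv(\mathbf A)$ since scaling down $\alpha,\beta$ only helps the constraints; the opposite direction is immediate since $\mathbf A\subseteq$ the set of scalar-$\le 1$ multiples of $\widetilde{\mathbf A}$-elements.

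For the core inclusion $\overline{\mathbf{B}}_{\Gcb(B_V)} \subseteq \mathbf{C}$, I argue by contradiction: suppose $z_0 \in M_n(\Gcb(B_V))$ has $\n{z_0}_{M_n(\Gcb(B_V))} \le 1$ but $z_0 \notin C_n$. Since $\mathbf{C}$ is a closed absolutely matrix convex matrix set over $\Gcb(B_V)$, Theorem \ref{thm-Hahn-Banach-matrix-convexity} produces $F \in M_n(\Gcb(B_V)')$ such that $\n{\mpair{F}{w}}_{M_{mn}} \le 1$ for all $w \in \mathbf{C}$ but $\n{\mpair{F}{z_0}}_{M_{n^2}} > 1$. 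Now use the duality $\Gcb(B_V)' = \Hcb(B_V)$ to regard $F$ as an element $(f_{ij}) \in M_n(\Hcb(B_V))$. The condition $\n{\mpair{F}{w}}_{M_{mn}}\le 1$ for all $w\in\mathbf{A}$ — which are among the $w\in\mathbf{C}$ — says precisely that for every $m$ and every $x = (x_{kl}) \in B_{M_m(V)}$ we have $\n{\big(f_{ij}(x_{kl})\big)}_{M_{nm}} \le 1$, because $\mpair{F}{(\delta_V)_m(x)} = \big(f_{ij}(x_{kl})\big)$ by the definition of the matrix pairing and of the evaluation functionals. By the operator space norm formula \eqref{eqn-oss-for-Hcb} this is exactly $\n{(f_{ij})}_{M_n(\Hcb(B_V))} \le 1$, i.e. $F$ lies in the closed unit ball of $M_n(\Hcb(B_V)) = M_n(\Gcb(B_V)')$. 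But then, since $z_0$ is in the closed unit ball of $M_n(\Gcb(B_V))$, the standard matrix-duality pairing inequality (the matricial norm formula for the dual operator space) gives $\n{\mpair{F}{z_0}}_{M_{n^2}} \le \n{F}_{M_n(\Gcb(B_V)')}\,\n{z_0}_{M_n(\Gcb(B_V))} \le 1$, contradicting the separation. Hence $z_0\in C_n$, completing the proof for $\mathbf{A}$, and the comparison of the previous paragraph transfers it to $\widetilde{\mathbf{A}}$.

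The step I expect to be the main obstacle — or at least the one requiring the most care — is correctly matching up the three different pairings: the matrix-convexity pairing $\mpair{\cdot}{\cdot}$ from Theorem \ref{thm-Hahn-Banach-matrix-convexity} (which naturally lives on $M_n(\Gcb(B_V)')\times M_m(\Gcb(B_V))$), the identification $\Gcb(B_V)'=\Hcb(B_V)$ at the matricial level, and the concrete evaluation $\mpair{(f_{ij})}{(\delta_V)_m(x_{kl})} = (f_{ij}(x_{kl}))$, so that the separation inequality against $\mathbf{A}$ really does reduce to the norm formula \eqref{eqn-oss-for-Hcb}. Once one is careful that the evaluation functionals $\delta_V(x)$ act on $f\in\Hcb(B_V)$ by $f\mapsto f(x)$ and that this is compatible with the operator space duality, everything falls into place, and no compactness or Weierstrass-type argument is needed here — the abstract Hahn--Banach separation does all the work, which is why taking the closure of the hull (rather than the hull itself) is both necessary and sufficient.
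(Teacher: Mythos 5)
Your main argument for $\mathbf{A}$ is correct and is essentially the paper's proof: the easy inclusion follows from Proposition \ref{prop-delta-is-cb-holomorphic} together with the closedness and absolute matrix convexity of $\overline{\mathbf{B}}_{\Gcb(B_V)}$, and the reverse inclusion follows by separating a putative $z_0$ with Theorem \ref{thm-Hahn-Banach-matrix-convexity} and observing that the separation inequality against $\mathbf{A}$ is exactly the statement $\n{(f_{ij})}_{M_n(\Hcb(B_V))}\le 1$ via \eqref{eqn-oss-for-Hcb}, which contradicts $\n{\mpair{F}{z_0}}>1$ by operator space duality. Your bookkeeping of the three pairings is exactly what the paper's equation \eqref{eqn-completely-norming} records.

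However, your treatment of $\widetilde{\mathbf{A}}$ contains a genuine error: the rescaling goes the wrong way. An element of $\amconv(\widetilde{\mathbf{A}})$ of the form $\alpha\,\bigl((\delta_V)_k(x)/\n{x}\bigr)\,\beta$ equals $\alpha'(\delta_V)_k(x)\beta'$ with $\alpha'=\alpha/\sqrt{\n{x}}$, $\beta'=\beta/\sqrt{\n{x}}$, but since $\n{x}<1$ this \emph{enlarges} $\alpha$ and $\beta$ (e.g.\ for $\alpha=\beta=1$ and $\n{x}=1/2$ one gets $\alpha'=\sqrt{2}>1$), so the constraints of Lemma \ref{lemma-characterization-abs-mat-convex-hull} can fail and this does \emph{not} exhibit the element as lying in $\amconv(\mathbf{A})$. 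Your fallback remark about ``scaling $x$ up toward the unit sphere'' does not repair this, because $\delta_V$ is nonlinear, so $(\delta_V)_k(x)/\n{x}\neq(\delta_V)_k(x/\n{x})$ (and the latter is not even defined, as $x/\n{x}\notin B_{M_k(V)}$). The legitimate direction is the opposite one: $(\delta_V)_k(x)=\sqrt{\n{x}}\,\bigl((\delta_V)_k(x)/\n{x}\bigr)\sqrt{\n{x}}$ shows $\mathbf{A}\subseteq\amconv(\widetilde{\mathbf{A}})$, hence $\overline{\amconv(\mathbf{A})}\subseteq\overline{\amconv(\widetilde{\mathbf{A}})}$. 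Fortunately this is all you need: since each element of $\widetilde{\mathbf{A}}$ has norm exactly $1$ by Proposition \ref{prop-delta-is-cb-holomorphic}, you get the chain $\overline{\mathbf{B}}_{\Gcb(B_V)}=\overline{\amconv(\mathbf{A})}\subseteq\overline{\amconv(\widetilde{\mathbf{A}})}\subseteq\overline{\mathbf{B}}_{\Gcb(B_V)}$, and all three sets coincide. (The paper instead reruns the separation argument for $\widetilde{\mathbf{A}}$, using Corollary \ref{corollary-Schwarz}(b) to check that the supremum of pairings over $\widetilde{\mathbf{A}}$ still computes the norm of $M_m(\Hcb(B_V))$; either route works once the direction of the rescaling is corrected.)
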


\begin{proof}
From the definition of the operator space structure on $\Hcb(B_V)$, note that for any $f \in M_m(\Hcb(B_V))$,
\begin{equation}\label{eqn-completely-norming}
\n{f}_{M_m(\Hcb(B_V))} = \sup \big\{ \n{\mpair{f}{a}}_{M_{nm}} \;:\; a \in A_n, n\in\mathbb N \big\}.
\end{equation}
Since $\mathbf{A} \subseteq \mathbf{B}_{\Gcb(B_V)}$, the closure of the absolutely matrix convex hull of $\mathbf{A}$ is contained in $\overline{\mathbf{B}}_{\Gcb(B_V)}$. If they were different, the Hahn--Banach theorem for matrix convexity (Theorem \ref{thm-Hahn-Banach-matrix-convexity}) would yield a contradiction with \eqref{eqn-completely-norming}.

For the second part recall that the elements of $\widetilde{A}_n$ have norm one thanks to Proposition \ref{prop-delta-is-cb-holomorphic} and note that from Corollary \ref{corollary-Schwarz} $(b)$, for any $f \in M_m(\Hcb(B_V))$ and any $x \in B_{M_n(V)} \setminus \{0\}$ we have 
$\n{\mpair{f}{  (\delta_V)_n(x)/\n{x}}}_{M_{nm}} \le \n{f}_{M_m(\Hcb(B_V))} \cdot 1$, from where
\[
\n{f}_{M_m(\Hcb(B_V))} = \sup \big\{ \n{\mpair{f}{a}}_{M_{nm}} \;:\; a \in \widetilde{A}_n, n\in\mathbb N \big\}.
\]
Now, the rest of the proof is analogous to that of the first part.
\end{proof}

Note that from Proposition \ref{proposition-ball-is-convex-hull-of-image-of-delta}, since the $n$-th level of $\amconv(\mathbf{A})$ (resp. $\amconv(\widetilde{\mathbf{A}})$) is dense in $\overline{B}_{M_n(\Gcb(B_V))}$, a standard argument shows that every $u \in B_{M_n(\Gcb(B_V))}$ can be written as an absolutely convergent series $u = \sum_{j=1}^\infty u_j$ where $u_j \in \amconv(\mathbf{A})$ (resp. $\amconv(\widetilde{\mathbf{A}})$) and $\sum_{j=1}^\infty \n{u_j}_{M_n(\Gcb(B_V))} < 1$.
The next result is a slight refinement of this, compare with the description of the absolutely matrix convex hull given in Lemma \ref{lemma-characterization-abs-mat-convex-hull}.

\begin{proposition}\label{prop-description-oss-Gcb}
Let $V$ be an operator space.
Every $u \in M_n(\Gcb(B_V))$ admits a representation as an absolutely convergent series of the form
\[
u = \sum_{j=1}^\infty \left( \sum_{i=1}^{m_j} \alpha^j_i c^j_i(\delta_V)_{k^j_i}(x^j_i) \beta^j_i \right)
\]
where for each $j$ 
we have $x^j_i \in B_{M_{k^j_i}(V)}$, $\alpha^j_i \in M_{n,k^j_i}$, $\beta^j_i \in M_{k^j_i,n}$, $c^j_i \in \C$.
Moreover, 
\[
\n{u}_{M_n(\Gcb(B_V))} = \inf \bigg\{ \sum_{j=1}^\infty \n{\sum_{i=1}^{m_j} \alpha^j_i(\alpha^j_i)^*}^{1/2} \n{\sum_{i=1}^{m_j} (\beta^j_i)^*\beta^j_i}^{1/2} \max_{1 \le i \le m_j} |c^j_i|\n{x^j_i} \bigg\}
\]
where the infimum is taken over all such representations.
\end{proposition}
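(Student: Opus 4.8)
The plan is to establish the two directions of the equality separately, with the structural representation statement emerging naturally from the argument. Denote by $\nu(u)$ the infimum on the right-hand side. First I would prove $\n{u}_{M_n(\Gcb(B_V))} \le \nu(u)$ (and in particular that every $u$ admits such a representation with finite $\nu(u)$). Given any representation of $u$ as in the statement, I would estimate the norm of each inner block $v_j = \sum_{i=1}^{m_j} \alpha^j_i c^j_i (\delta_V)_{k^j_i}(x^j_i)\beta^j_i$. The idea is to absorb the scalars $c^j_i$ into the rescaled evaluations: since $|c^j_i|\n{x^j_i} \le \max_i |c^j_i|\n{x^j_i} =: t_j$, the element $c^j_i(\delta_V)_{k^j_i}(x^j_i)/t_j$ lies in the closed absolutely matrix convex hull of $\widetilde{\mathbf{A}}$ (using that $\widetilde{A}_n$ consists of norm-one elements by Proposition \ref{prop-delta-is-cb-holomorphic}, and that $\widetilde{\mathbf{A}}$ is closed under multiplication by unit-modulus scalars on the level of its absolutely matrix convex hull). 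Then, after a standard normalization of the $\alpha^j_i,\beta^j_i$ by the scalars $\n{\sum_i \alpha^j_i(\alpha^j_i)^*}^{1/2}$ and $\n{\sum_i (\beta^j_i)^*\beta^j_i}^{1/2}$, the characterization of the absolutely matrix convex hull in Lemma \ref{lemma-characterization-abs-mat-convex-hull} together with Lemma \ref{lemma-norm-of-abs-mat-conv-hull} and Proposition \ref{proposition-ball-is-convex-hull-of-image-of-delta} gives $\n{v_j}_{M_n(\Gcb(B_V))} \le \n{\sum_i \alpha^j_i(\alpha^j_i)^*}^{1/2}\n{\sum_i (\beta^j_i)^*\beta^j_i}^{1/2} t_j$. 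Summing over $j$ and taking the infimum yields the inequality.

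**The reverse inequality.** For $\n{u}_{M_n(\Gcb(B_V))} \ge \nu(u)$, I would use the remark immediately preceding the proposition: since $\amconv(\widetilde{\mathbf{A}})$ is dense in $\overline{B}_{M_n(\Gcb(B_V))}$, any $u$ with $\n{u} < 1$ can be written as an absolutely convergent series $u = \sum_j u_j$ with $u_j \in \amconv(\widetilde{\mathbf{A}})$ and $\sum_j \n{u_j} < 1$; rescaling handles arbitrary $u$. For each $u_j$, apply Lemma \ref{lemma-characterization-abs-mat-convex-hull} to write $u_j = \sum_{i=1}^{m_j}\alpha^j_i a^j_i \beta^j_i$ with $a^j_i \in \widetilde{A}_{k^j_i}$ (so $a^j_i = (\delta_V)_{k^j_i}(x^j_i)/\n{x^j_i}$ for some $x^j_i$, which after renaming and introducing $c^j_i = 1/\n{x^j_i}$ — or more carefully, absorbing the scalar $1/\n{x^j_i}$ into a scalar coefficient $c^j_i$ and replacing $x^j_i$ by a rescaled ball element — fits the required form) and $\n{\sum_i\alpha^j_i(\alpha^j_i)^*}\le 1$, $\n{\sum_i(\beta^j_i)^*\beta^j_i}\le 1$. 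The factor $\max_i |c^j_i|\n{x^j_i}$ then contributes at most something comparable to $\n{u_j}$ after the appropriate bookkeeping; one must be slightly careful because Lemma \ref{lemma-characterization-abs-mat-convex-hull} describes the hull of $\widetilde{\mathbf{A}}$ up to its norm, not a scaled copy, so the cleanest route is to apply the characterization to $u_j/\n{u_j}$, obtain a representation inside the hull of $\widetilde{\mathbf{A}}$, and then scale back multiplying one of the $\alpha$'s by $\n{u_j}$; this makes the $j$-th summand of the infimand equal to $\n{u_j}$ since the evaluations are already normalized. Summing, $\nu(u) \le \sum_j \n{u_j} < 1$, and letting this bound approach $\n{u}$ finishes the argument.

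**Main obstacle.** I expect the principal technical nuisance to be the interplay between the two parametrizations in the statement — the explicit scalars $c^j_i$ together with ball elements $x^j_i$ of arbitrary norm, versus the normalized generators of $\widetilde{\mathbf{A}}$ — and in particular justifying that multiplying an element of $\widetilde{A}_k$ by an arbitrary complex scalar $c$ keeps the product inside the absolutely matrix convex hull scaled by $|c|\cdot(\text{norm})$. This is where the quantity $\max_{1\le i \le m_j}|c^j_i|\n{x^j_i}$ must be shown to be exactly the right normalization constant: too naive a bound (e.g. summing the $|c^j_i|\n{x^j_i}$) would give the wrong, weaker estimate. The resolution is that scalars of modulus $\le 1$ can be folded into the $\alpha^j_i$ (or $\beta^j_i$) without disturbing the $\n{\sum\alpha\alpha^*}$ bound, while the common factor $\max_i|c^j_i|\n{x^j_i}$ is pulled out front; this relies precisely on the homogeneity built into Lemma \ref{lemma-characterization-abs-mat-convex-hull}. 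Once that normalization point is handled correctly, the rest is routine manipulation of absolutely convergent series in the Banach space $M_n(\Gcb(B_V))$.
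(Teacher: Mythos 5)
Your proposal is correct in substance but organized quite differently from the paper's proof. The paper introduces the quantity $\tn{u}_n$ defined by the infimum over \emph{finite} representations on $G=\spa\{\delta_V(x)\}_{x\in B_V}$, verifies that $(\tn{\cdot}_n)_n$ satisfies Ruan's axioms, forms the completion $\widetilde{G}$ of $G$ under these norms, and shows that the formal identity $\iota:\widetilde{G}\to\Gcb(B_V)$ is a complete $1$-quotient by comparing matrix unit balls via Proposition \ref{proposition-ball-is-convex-hull-of-image-of-delta}; the series representation and the infimum formula then drop out of the quotient description. You bypass the auxiliary operator space entirely and argue directly: the inequality $\n{u}\le\nu(u)$ by folding the unimodular parts of the scalars into the $\alpha^j_i$ and invoking Lemmas \ref{lemma-characterization-abs-mat-convex-hull} and \ref{lemma-norm-of-abs-mat-conv-hull} together with Proposition \ref{proposition-ball-is-convex-hull-of-image-of-delta}, and the reverse inequality by the series decomposition from the remark preceding the proposition. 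The underlying estimates are the same --- your observation that the numbers $c^j_i\n{x^j_i}/\max_i|c^j_i|\n{x^j_i}$ have modulus at most one and can be absorbed without disturbing $\n{\sum_i\alpha^j_i(\alpha^j_i)^*}$ is exactly the mechanism the paper exploits through Ruan's axioms --- but your route is shorter for this particular statement, at the cost of not exhibiting the finite-representation norms as an operator space structure in their own right.

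One step needs repair. In the reverse inequality you propose to apply Lemma \ref{lemma-characterization-abs-mat-convex-hull} to $u_j/\n{u_j}$. Since $u_j\in\amconv(\widetilde{\mathbf{A}})$ forces $\n{u_j}\le 1$, dividing by the norm scales $u_j$ \emph{up}, and absolutely matrix convex sets are not stable under scaling up, so $u_j/\n{u_j}$ need not lie in $\amconv(\widetilde{\mathbf{A}})$ and the lemma does not apply to it. The fix is already implicit in the density argument you invoke: it produces $u_j=\lambda_j d_j$ with $d_j\in\amconv(\widetilde{\mathbf{A}})$ and $\sum_j\lambda_j<\n{u}+\eps$; apply the lemma to $d_j$ and distribute $\lambda_j^{1/2}$ into each of $\alpha^j_i$ and $\beta^j_i$, so that the $j$-th summand of the infimand is at most $\lambda_j$ (rather than $\n{u_j}$, which may be strictly smaller --- that only helps). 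With this change your argument is complete.
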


\begin{proof}
Denote $G = \spa\{\delta_V(x)\}_{x\in B_V} \subset \Gcb(B_V)$. For any $u \in M_n(G)$ define
\[
\tn{u}_n = \inf \bigg\{ \n{\sum_{i=1}^m \alpha_i\alpha_i^*}^{1/2} \n{\sum_{i=1}^m \beta_i^*\beta_i}^{1/2} \max_{1 \le i \le m} |c_i|\n{x_i} \bigg\}
\]
where the infimum is taken over all representations of the form 
$u = \sum_{i=1}^m \alpha_i c_i(\delta_V)_{k_i}(x_i) \beta_i$ with $x_i \in B_{M_{k_i}(V)}$, $\alpha_i \in M_{n,k_i}$, $\beta_i \in M_{k_i,n}$, $c_i \in \C$.
Note that the representations under consideration are equivalent to having
\[
u = \begin{bmatrix}
\alpha_1 & \alpha_2 &\cdots &\alpha_m\\
\end{bmatrix}
\begin{bmatrix}
c_1 (\delta_V)_{k_1}(x_1) & 0 &  \cdots & 0\\
0 & c_2 (\delta_V)_{k_2}(x_2) &  \cdots & 0\\
\vdots &  & \ddots  & \vdots\\
0 & 0 &  \cdots & c_m (\delta_V)_{k_m}(x_m)\\
\end{bmatrix}
\begin{bmatrix}
\beta_1 \\ \beta_2 \\ \vdots \\ \beta_m\\
\end{bmatrix}
\]
from where it is clear that $\n{u}_{M_n(\Gcb(B_V))} \le \tn{u}_n$, using Ruan's axioms for $\Gcb(B_V)$ and the fact that by Proposition \ref{prop-delta-is-cb-holomorphic},
 for any $x \in B_{M_n(V)}$ we have $\n{ (\delta_V)_n(x) }_{M_n(\Gcb(B_V))} = \n{x}_{M_n(V)}$.
Additionally, note that using a trivial representation we have $\tn{ (\delta_V)_n(x) }_n \le \n{x}_{M_n(V)}$, and therefore $\tn{ (\delta_V)_n(x) }_n = \n{x}_{M_n(V)}$.

Let us now show that the sequence $(\tn{\cdot})_{n=1}^\infty$ satisfies the criteria in \cite[Prop. 2.3.6]{Effros-Ruan-book}, which will imply  that $(\tn{\cdot}_n)_{n=1}^\infty$ defines a sequence of seminorms on $G$ that satisfies Ruan's axioms.
Let $u \in M_n(G)$, $\alpha \in M_{\ell,n}$, $\beta \in M_{n,\ell}$. A representation $u = \sum_{i=1}^m \alpha_i c_i(\delta_V)_{k_i}(x_i) \beta_i$ as in the definition of $\tn{\cdot}_n$ yields   $\alpha u \beta=\sum_{i=1}^m \alpha\alpha_i c_i(\delta_V)_{k_i}(x_i) \beta_i\beta$. Since
\[
\n{\sum_{i=1}^m \alpha\alpha_i\alpha_i^*\alpha^*}^{1/2} \le \n{\sum_{i=1}^m \alpha_i\alpha_i^*}^{1/2} \n{\alpha}, \qquad \n{\sum_{i=1}^m \beta^*\beta_i^*\beta_i \beta}^{1/2} \le \n{\sum_{i=1}^m \beta_i^*\beta_i}^{1/2}\n{\beta},
\]
it is clear now that $\tn{ \alpha u \beta}_\ell \le \n{\alpha} \tn{u}_n \n{\beta}$.
Now take $u^1\in M_{n_1}(G)$, $u^2\in M_{n_2}(G)$  and let $\varepsilon>0$ be given.
For each $j=1,2$, choose representations $u^j = \sum_{i=1}^{m_j} \alpha^j_i c^j_i(\delta_V)_{k^j_i}(x^j_i) \beta^j_i$ as in the definition of $\tn{u^j}_{n_j}$ with $\max_{1 \le i \le m_j} |c^j_i|\n{x^j_i} =1$ and $\n{\sum_{i=1}^{m_j} \alpha^j_i(\alpha^j_i)^*}^{1/2}=\n{\sum_{i=1}^{m_j} (\beta^j_i)^*\beta^j_i}^{1/2}\le (\tn{u^j}_{n_j} + \varepsilon)^{1/2}$ and  (this can be achieved by rescaling).
Note that $u_1 \oplus u_2$ can be written as
\[
u_1 \oplus u_2=\sum_{i=1}^{m_1} \begin{bmatrix} \alpha^1_i\\ 0 \end{bmatrix} c^1_i(\delta_V)_{k^1_i + n_2}(x^1_i \oplus 0) \begin{bmatrix} \beta^1_i &0 \end{bmatrix} + \sum_{i=1}^{m_2} \begin{bmatrix}0 \\ \alpha^2_i \end{bmatrix} c^2_i(\delta_V)_{n_1+ k^2_i }(0 \oplus x^2_i ) \begin{bmatrix}0 & \beta^2_i \end{bmatrix}.
\]
Now, since
\[
\sum_{i=1}^{m_1} \begin{bmatrix} \alpha^1_i\\ 0 \end{bmatrix} \begin{bmatrix} \alpha^1_i\\ 0 \end{bmatrix}^* + \sum_{i=1}^{m_2} \begin{bmatrix}0 \\ \alpha^2_i \end{bmatrix}\begin{bmatrix}0 \\ \alpha^2_i \end{bmatrix}^* = \left( \sum_{i=1}^{m_1}  \alpha^1_i(\alpha^1_i)^* \right) \oplus \left( \sum_{i=1}^{m_2}  \alpha^2_i(\alpha^2_i)^* \right)
\]
it follows that
\[
\n{ \sum_{i=1}^{m_1} \begin{bmatrix} \alpha^1_i\\ 0 \end{bmatrix} \begin{bmatrix} \alpha^1_i\\ 0 \end{bmatrix}^* + \sum_{i=1}^{m_2} \begin{bmatrix}0 \\ \alpha^2_i \end{bmatrix}\begin{bmatrix}0 \\ \alpha^2_i \end{bmatrix}^* }^{1/2} \le (\max\big\{\tn{u^1}_{n_1}, \tn{u^2}_{n_2}\big\} + \varepsilon)^{1/2}
\]
and similarly 
\[
\n{ \sum_{i=1}^{m_1} \begin{bmatrix} \beta^1_i &0 \end{bmatrix}^* \begin{bmatrix} \beta^1_i &0 \end{bmatrix} + \sum_{i=1}^{m_2} \begin{bmatrix}0 & \beta^2_i \end{bmatrix}^* \begin{bmatrix}0 & \beta^2_i \end{bmatrix} }^{1/2} \le (\max\big\{\tn{u^1}_{n_1}, \tn{u^2}_{n_2}\big\} + \varepsilon)^{1/2}
\]
from where we conclude that
$\tn{ u^1 \oplus u^2 }_{n_1+n_2} \le \max\big\{\tn{u^1}_{n_1}, \tn{u^2}_{n_2}\big\}$.

We next note that the seminorms $\tn{\cdot}_n$ are in fact norms, because $\tn{\cdot}_n$ dominates the norm on $M_n(\Gcb(B_V))$.
Therefore, if we denote by $\widetilde{G}$ the completion of $G$ under $\tn{\cdot}_1$, then the formal identity $\iota : G \to \Gcb(B_V)$ extends to a complete contraction $\iota : \widetilde{G} \to \Gcb(B_V)$
and therefore $\iota(\mathbf{B}_{\widetilde{G}}) \subseteq \mathbf{B}_{\Gcb(B_V)}$.
Recall that for every $n\in \N$ and every $x \in B_{M_n(V)}$ we have that $\tn{ (\delta_V)_n(x) }_n = \n{x}_{M_{n}(V)} < 1$, so $\mathbf{A} \subset \iota(\mathbf{B}_{\widetilde{G}})$ where $\mathbf{A}$ is defined as in Proposition \ref{proposition-ball-is-convex-hull-of-image-of-delta}.
Since $\mathbf{B}_{\widetilde{G}}$ is absolutely matrix convex and thus so is $\iota(\mathbf{B}_{\widetilde{G}})$, it follows from Proposition \ref{proposition-ball-is-convex-hull-of-image-of-delta} that
$
\overline{ \iota(\mathbf{B}_{\widetilde{G}}) } = \overline{\mathbf{B}}_{\Gcb(B_V)},
$
and  then, by standard arguments
(see e.g. \cite[Proof of Prop. 4.28]{Bowers-Kalton})
 it yields that
$
\iota( \overline{\mathbf{B}}_{\widetilde{G}})  \supseteq \mathbf{B}_{\Gcb(B_V)}.
$
Multiplying by $\lambda\in(0,1)$ and taking the union, we get
$
\iota( \mathbf{B}_{\widetilde{G}})  \supseteq \mathbf{B}_{\Gcb(B_V)},
$
from where we obtain that 
$
\iota( \mathbf{B}_{\widetilde{G}})  = \mathbf{B}_{\Gcb(B_V)}.
$
So, $\iota$ is not just a contraction but in fact a complete $1$-quotient, and now the desired conclusion follows by the standard arguments mentioned in the paragraph before the statement of this proposition.
\end{proof}

\section{The linearization property}

We are now ready to prove that the space $\Gcb(B_V)$ satisfies the desired linearization property.

\begin{theorem}\label{thm-linearization}
Let $V$ be an operator space.
For every operator space $W$ and every completely bounded holomorphic function $f:B_V\to W$, there exists a unique operator $\widehat{f}:\Gcb(B_V)\to W$ such that $\widehat{f}\circ \delta_V=f$ and $\bn{\widehat{f}}_{\cb}=\n{f}_{ \Hcb(B_V,W) }$. That is, $\widehat{f}$ makes the following diagram commutative:
\[
\xymatrix{\Gcb(B_V)\ar@{-->}[rd]^{\widehat{f}}\\
     B_V\ar[r]^{f}\ar^{\delta_V}[u]& W }
\]
This property completely isometrically isomorphically characterizes $\Gcb(B_V)$.
Moreover, the assignment $f \mapsto \widehat{f}$ is a completely isometric isomorphism between $\Hcb(B_V,W)$ and $\CB(\Gcb(B_V),W)$.
\end{theorem}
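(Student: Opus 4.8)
The plan is to construct $\widehat f$ by hand on the dense subspace $G:=\spa\{\delta_V(x)\}_{x\in B_V}$ of $\Gcb(B_V)$ and then control all of its matricial norms using only the operator space duality $\Gcb(B_V)'=\Hcb(B_V)$ together with the defining formula \eqref{eqn-oss-for-Hcb}. The starting observation is that for every $w'\in W'$ the scalar function $w'\circ f$ lies in $\Hcb(B_V)$ with $\n{w'\circ f}_{\Hcb}\le\n{w'}\,\n{f}_{\Hcb}$: for $(v_{ij})\in B_{M_p(V)}$ the matrix $\big((w'\circ f)(v_{ij})\big)$ is the matrix pairing $\mpair{w'}{(f(v_{ij}))}$ of $w'\in M_1(W')$ with $(f(v_{ij}))\in M_p(W)$, so its norm is at most $\n{w'}\,\n{f}_{\Hcb}$. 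Hence the assignment $\sum_i a_i\delta_V(x_i)\mapsto\sum_i a_i f(x_i)$ is well defined on $G$ (if $\sum_i a_i\delta_V(x_i)=0$ in $\Gcb(B_V)\subseteq\Hcb(B_V)'$ then $w'\big(\sum_i a_i f(x_i)\big)=\pair{\sum_i a_i\delta_V(x_i)}{w'\circ f}=0$ for every $w'$), giving a linear $\widehat f:G\to W$ with $\widehat f\circ\delta_V=f$; the same estimate gives $\n{\widehat f(u)}_W\le\n{f}_{\Hcb}$ for $u$ in the unit ball of $G$, so $\widehat f$ extends — uniquely, since $G$ is dense — to a bounded operator $\widehat f:\Gcb(B_V)\to W$ with $\widehat f\circ\delta_V=f$.

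To bound $\n{\widehat f}_{\cb}$ I would fix $n$ and $u=(u_{kl})\in M_n(G)$ with $\n{u}_{M_n(\Gcb(B_V))}\le1$ and use that $W\hookrightarrow W''$ completely isometrically to reduce to estimating $\n{\mpair{\psi}{(\widehat f(u_{kl}))}}_{M_{mn}}$ for $\psi=(\psi_{rs})\in M_m(W')$ with $\n{\psi}\le1$. Writing $u_{kl}=\sum_i a_i^{(kl)}\delta_V(x_i^{(kl)})$ one checks the identity $\psi_{rs}(\widehat f(u_{kl}))=\pair{u_{kl}}{\psi_{rs}\circ f}$, so that $\mpair{\psi}{(\widehat f(u_{kl}))}$ is, up to the canonical shuffle of indices, the $\Hcb(B_V)$–$\Gcb(B_V)$ matrix pairing of $(\psi_{rs}\circ f)\in M_m(\Hcb(B_V))$ with $(u_{kl})\in M_n(\Gcb(B_V))$; its norm is therefore at most $\n{(\psi_{rs}\circ f)}_{M_m(\Hcb(B_V))}$, which in turn is at most $\n{f}_{\Hcb}$ by repeating the matrix-pairing argument of the first paragraph for the $W$-valued $f$ via \eqref{eqn-oss-for-Hcb}. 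Since $M_n(G)$ is dense in $M_n(\Gcb(B_V))$ this yields $\n{\widehat f}_{\cb}\le\n{f}_{\Hcb}$. The reverse inequality is immediate: $\n{f}_{\Hcb}=\n{\widehat f\circ\delta_V}_{\Hcb}\le\n{\widehat f}_{\cb}\,\n{\delta_V}_{\Hcb}=\n{\widehat f}_{\cb}$ by Proposition \ref{prop-delta-is-cb-holomorphic}. (Alternatively, $\n{\widehat f}_{\cb}\le\n{f}_{\Hcb}$ can be read off from Proposition \ref{proposition-ball-is-convex-hull-of-image-of-delta}, since $(\widehat f)_n$ maps the generating set $\mathbf{A}$ into $\n{f}_{\Hcb}\,\overline{\mathbf{B}}_W$ and commutes with absolutely matrix convex combinations.)

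Uniqueness of $\widehat f$ is density of $G$ again. For bijectivity of $f\mapsto\widehat f$: it is injective because $\widehat f\circ\delta_V=f$, and surjective because for $T\in\CB(\Gcb(B_V),W)$ the map $T\circ\delta_V$ is holomorphic and, by Proposition \ref{prop-delta-is-cb-holomorphic}, completely bounded, with $\widehat{T\circ\delta_V}=T$. To upgrade this to a complete isometry I would pass to matrix levels: given $(f_{rs})\in M_m(\Hcb(B_V,W))$, Lemma \ref{lemma-amplifications-are-holomorphic} and \eqref{eqn-oss-for-Hcb} identify it completely isometrically with a single $F\in\Hcb(B_V,M_m(W))$, $F(x)=(f_{rs}(x))$; applying the already-established isometric part of the theorem with $M_m(W)$ in place of $W$ produces $\widehat F\in\CB(\Gcb(B_V),M_m(W))$ with $\n{\widehat F}_{\cb}=\n{F}_{\Hcb}$, and uniqueness forces $\widehat F=(\widehat{f_{rs}})$ under $M_m(\CB(\Gcb(B_V),W))=\CB(\Gcb(B_V),M_m(W))$, so $\n{(\widehat{f_{rs}})}=\n{(f_{rs})}$. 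The characterization statement is then the usual uniqueness of a universal object: if a pair $(Z,\delta)$ has the same factorization property, applying the universal property of $\Gcb(B_V)$ to $\delta$ and that of $Z$ to $\delta_V$ gives mutually inverse complete contractions $\Gcb(B_V)\leftrightarrow Z$ intertwining $\delta_V$ and $\delta$.

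I expect the conceptual content to be light once one commits to phrasing everything through the duality $\Gcb(B_V)'=\Hcb(B_V)$: the key norm estimate is essentially hard-wired into the definition \eqref{eqn-oss-for-Hcb} of the operator space structure on $\Hcb$. The only genuinely delicate point is the bookkeeping — matching the index conventions of the several matrix pairings involved, and the clean identification $M_m(\Hcb(B_V,W))=\Hcb(B_V,M_m(W))$ needed in the complete-isometry step.
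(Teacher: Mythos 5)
Your argument is correct, and its core estimate runs along a different track than the paper's. The paper defines $\widehat{f}$ on $\spa\{\delta_V(x)\}_{x\in B_V}$ by first invoking the linear independence of $\{\delta_V(x)\}_{x\in B_V\setminus\{0\}}$ (borrowed from the holomorphic-Lipschitz setting), and then obtains $\bn{\widehat{f}}_{\cb}=\n{f}_{\Hcb}$ in one stroke from Proposition \ref{proposition-ball-is-convex-hull-of-image-of-delta} together with Lemma \ref{lemma-norm-of-abs-mat-conv-hull}: since the closed matrix unit ball of $\Gcb(B_V)$ is the closed absolutely matrix convex hull of $\mathbf{A}$ and amplifications of linear maps commute with absolutely matrix convex combinations, the $\cb$-norm of $\widehat{f}$ is the supremum of $\n{(f(x_{ij}))}$ over $\mathbf{A}$, which is $\n{f}_{\Hcb}$ by definition. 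That is exactly the route you relegate to your parenthetical ``alternatively''; your primary route instead gets well-definedness by pairing against $w'\circ f\in\Hcb(B_V)$ for $w'\in W'$ (thereby avoiding the linear-independence lemma altogether) and gets the $\cb$-bound by embedding $W$ into $W''$ and reducing everything to the $M_m(\Hcb(B_V))$--$M_n(\Gcb(B_V))$ matrix pairing, whose contractivity is built into the duality $\Gcb(B_V)'=\Hcb(B_V)$. Both are sound; the paper's is shorter once Proposition \ref{proposition-ball-is-convex-hull-of-image-of-delta} is in hand, while yours is self-contained modulo the duality and, as a bonus, spells out the ``moreover'' clauses (the complete isometry of $f\mapsto\widehat{f}$ via the identification $M_m(\Hcb(B_V,W))=\Hcb(B_V,M_m(W))$, and the universal-object uniqueness) that the paper's proof leaves implicit.
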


\begin{proof}
Arguing exactly as in \cite[Prop. 2.5(c)]{aron2023linearization}, we note that the set $\{ \delta_V(x) \}_{x \in B_V\setminus\{0\}}$ is linearly independent.
Therefore, we can define $ \widehat{f} : \spa \{ \delta_V(x) \}_{x \in B_V} \to W$ by $\widehat{f}(\delta_V(x)) = f(x)$ and extending linearly.
From Proposition \ref{proposition-ball-is-convex-hull-of-image-of-delta} and Lemma \ref{lemma-norm-of-abs-mat-conv-hull}, in order for $\widehat{f}$ to extend to a completely bounded linear map from $\Gcb(B_V)$ to $W$ it suffices that the quantity
$\sup\big\{ \bn{ \big( \widehat{f}\delta_V(x_{ij})  \big) }_{M_n(W)} \;:\; (x_{ij}) \in B_{M_n(V)} \big\}$ is finite, and this quantity will in fact be the $\cb$-norm of the extension. But from our definition of $\widehat{f}$ this is
$\sup\big\{ \bn{ \big( f(x_{ij})  \big) }_{M_n(W)} \;:\; (x_{ij}) \in B_{M_n(V)} \big\}$, which is precisely $\n{f}_{\Hcb}$.
\end{proof}

\begin{remark}\label{remark-linealization-preserves-properties}
Note that Theorem \ref{thm-linearization} easily implies an operator space version of \cite[Prop. 3.1(a)]{Mujica-linearization}: a function $f \in \Hcb(B_V,W)$ has image contained in a finite-dimensional subspace of $W$ if and only if $\widehat{f}$ is a finite-rank linear map.
This follows from looking at the diagram above using that $\Gcb(B_V)=\overline{\spa}\{\delta_V(x)\}_{x\in B_V}$.
In particular this applies when $f$ is a finite type polynomial, since as we pointed out in Section \ref{sec:prelim} in this case the image of $f$ is contained in a finite-dimensional subspace.

More generally, Proposition \ref{proposition-ball-is-convex-hull-of-image-of-delta} allows one to get various operator space versions of \cite[Prop. 3.4(b)]{Mujica-linearization} showing that the linearization process in Theorem \ref{thm-linearization} preserves various operator space notions of compactness. Since we will not be needing these results, we only sketch the argument and do not introduce all the relevant concepts (see \cite{ChaDiGa-Operator-p-compact, Webster} for the missing definitions).
We say that a function $f : B_V \to W$ is operator $p$-compact (resp. completely compact) if the matrix set over $W$ given by $\big( f_n(B_{M_n(V)}) \big)_{n=1}^\infty$ is operator $p$-compact (resp. completely compact).
Then, $f \in \Hcb(B_V,W)$ is operator $p$-compact (resp. completely compact) if and only if $\widehat{f}$ is an operator $p$-compact (resp. completely compact) linear map.
The key is to observe that for each $n\in\N$ we have $f_n(B_{M_n(V)}) = \widehat{f}_nA_n$ where $A_n$ is the set in Proposition \ref{proposition-ball-is-convex-hull-of-image-of-delta}, and that the property of being operator $p$-compact (resp. completely compact) is preserved when taking absolutely matrix convex hulls and closures. 
\end{remark}

As in the classical setting, we also have the following standard alternative linearization result:

\begin{theorem}
Let $V$ and $W$ be operator spaces.
If $f : B_V \to B_W$ is a completely bounded holomorphic function, there exists a unique completely bounded linear map $\widetilde{f} : \Gcb(B_V) \to \Gcb(B_W)$ such that $\widetilde{f}\delta_V = \delta_Wf$, that is, the following diagram commutes:
\[
\xymatrix{
\Gcb(B_V)\ar[r]^{\widetilde{f}} &\Gcb(B_W)\\
     B_V\ar[r]^{f}\ar[u]^{\delta_V}& B_W \ar[u]^{\delta_W}
     }
\]
Moreover, $\bn{\widetilde{f}}_{\cb} = \n{f}_{\Hcb}$.
\end{theorem}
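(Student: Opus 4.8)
The plan is to obtain $\widetilde{f}$ as a special case of the main linearization Theorem~\ref{thm-linearization}. Consider the composition $\delta_W \circ f : B_V \to \Gcb(B_W)$. Since $f : B_V \to B_W$ is a completely bounded holomorphic function (so in particular $f(B_V) \subseteq B_W$) and $\delta_W : B_W \to \Gcb(B_W)$ is a completely bounded holomorphic function of norm $1$ by Proposition~\ref{prop-delta-is-cb-holomorphic}, the composition is holomorphic (composition of holomorphic maps) and I claim it lies in $\Hcb(B_V, \Gcb(B_W))$ with $\n{\delta_W \circ f}_{\Hcb} \le \n{f}_{\Hcb}$. Indeed, for any $(x_{ij}) \in B_{M_m(V)}$, Corollary~\ref{corollary-Schwarz}$(a)$ applied to $\delta_W$ gives $\n{(\delta_W f(x_{ij}))}_{M_m(\Gcb(B_W))} = \n{(\delta_W)_m(f(x_{ij}))}_{M_m(\Gcb(B_W))} \le \n{\delta_W}_{\Hcb}\cdot \n{(f(x_{ij}))}_{M_m(W)} \le 1 \cdot \n{f}_{\Hcb}$, where I used that $(f(x_{ij})) = f_m(x_{ij}) \in \overline{B}_{M_m(W)}$ (more precisely $\n{(f(x_{ij}))}_{M_m(W)} \le \n{f}_{\Hcb}$ directly from the definition of the $\Hcb$-norm). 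Taking the supremum over all $m$ and all such matrices yields $\n{\delta_W \circ f}_{\Hcb} \le \n{f}_{\Hcb}$.

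Now apply Theorem~\ref{thm-linearization} with the operator space $W$ replaced by $\Gcb(B_W)$ and the completely bounded holomorphic function $\delta_W \circ f : B_V \to \Gcb(B_W)$. This produces a unique completely bounded linear map $\widetilde{f} := \widehat{\delta_W \circ f} : \Gcb(B_V) \to \Gcb(B_W)$ satisfying $\widetilde{f} \circ \delta_V = \delta_W \circ f$ and $\bn{\widetilde{f}}_{\cb} = \n{\delta_W \circ f}_{\Hcb}$. Uniqueness of $\widetilde{f}$ among completely bounded linear maps making the diagram commute is immediate from the fact that $\Gcb(B_V) = \overline{\spa}\{\delta_V(x)\}_{x \in B_V}$, so any two such maps agree on a dense subspace; this also follows directly from the uniqueness clause in Theorem~\ref{thm-linearization}.

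It remains to upgrade the inequality $\bn{\widetilde{f}}_{\cb} = \n{\delta_W \circ f}_{\Hcb} \le \n{f}_{\Hcb}$ to an equality. For this I would use that $\delta_W$ is completely isometric (Proposition~\ref{prop-delta-is-cb-holomorphic}): for any $(x_{ij}) \in B_{M_m(V)}$,
\[
\n{(f(x_{ij}))}_{M_m(W)} = \n{((\delta_W)_m f(x_{ij}))}_{M_m(\Gcb(B_W))} = \n{\widetilde{f}_m((\delta_V)_m(x_{ij}))}_{M_m(\Gcb(B_V))} \le \bn{\widetilde{f}}_{\cb}\,\n{((\delta_V)_m(x_{ij}))}_{M_m(\Gcb(B_V))} \le \bn{\widetilde{f}}_{\cb},
\]
using that $\n{((\delta_V)_m(x_{ij}))}_{M_m(\Gcb(B_V))} = \n{(x_{ij})}_{M_m(V)} \le 1$. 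Taking the supremum over all $m$ and $(x_{ij}) \in B_{M_m(V)}$ gives $\n{f}_{\Hcb} \le \bn{\widetilde{f}}_{\cb}$, completing the proof.

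There is no serious obstacle here: the statement is essentially a formal corollary of Theorem~\ref{thm-linearization} together with the complete isometry property of $\delta_W$. The only minor point requiring a little care is making sure that $\delta_W \circ f$ genuinely lands in $\Hcb(B_V, \Gcb(B_W))$ with controlled norm, but as indicated this follows directly from Corollary~\ref{corollary-Schwarz} (or from unwinding the definition of the $\Hcb$-norm), since $f(B_{M_m(V)})$ has entries which are controlled in $M_m(W)$-norm by $\n{f}_{\Hcb}$.
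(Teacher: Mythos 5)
Your proposal is correct and follows essentially the same route as the paper: linearize the composition $\delta_W\circ f$ via Theorem \ref{thm-linearization}, get uniqueness from the density of $\spa\{\delta_V(x)\}_{x\in B_V}$, and use the complete isometry of $\delta_W$ (Proposition \ref{prop-delta-is-cb-holomorphic}) to conclude $\n{\delta_W\circ f}_{\Hcb}=\n{f}_{\Hcb}$. Your version just spells out the two inequalities that the paper compresses into one line.
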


\begin{proof}
Note that the uniqueness part of the statement is clear, since $\widetilde{f}$ is uniquely determined by its values on the image of $\delta_V$ which has dense span in $\Gcb(B_V)$.
Note that since both $f$ and $\delta_W$ are completely bounded holomorphic functions, so is their composition $\delta_W f : B_V \to \Gcb(B_W)$.
Therefore, by the linearization property there exists $\widetilde{f} \in \CB(\Gcb(B_V), \Gcb(B_W))$ such that $\widetilde{f} \delta_V = \delta_W f$ and moreover $\bn{\widetilde{f}}_{\cb} = \n{\delta_W f }_{\Hcb}$.
Since $\delta_W$ is completely isometric (see Proposition  \ref{prop-delta-is-cb-holomorphic}), clearly $\n{\delta_W f }_{\Hcb} = \n{f}_{\Hcb}$.
\end{proof}

Now we present an operator space version of \cite[Prop. 2.3]{Mujica-linearization}.

\begin{proposition}\label{Prop:complemented subspace}
Any operator space
$V$ is completely isometrically isomorphic to a completely contractively complemented subspace of $\mathcal{G}^\infty_{\cb}(B_V)$.    
\end{proposition}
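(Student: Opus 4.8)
The plan is to imitate the classical argument in \cite[Prop. 2.3]{Mujica-linearization} but carefully tracking complete boundedness at each step. The inclusion $\iota : V \to \Hcb(B_V,V)$ given by restricting the identity map is a completely isometric embedding by Remark \ref{remark-linear-cb-is-holomorphic-cb} (it is the restriction to $B_V$ of the completely bounded linear map $\mathrm{Id}_V$, with $\|\mathrm{Id}_V\|_{\cb}=1$). Applying the linearization property (Theorem \ref{thm-linearization}) to the completely bounded holomorphic function $\iota \in \Hcb(B_V,V)$, we obtain a completely bounded linear map $\widehat{\iota} : \Gcb(B_V) \to V$ with $\|\widehat\iota\|_{\cb} = \|\iota\|_{\Hcb} = 1$, characterized by $\widehat\iota(\delta_V(x)) = x$ for all $x \in B_V$. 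On the other hand, by Proposition \ref{prop-delta-is-cb-holomorphic} the map $\delta_V : B_V \to \Gcb(B_V)$ is a completely isometric completely bounded holomorphic function of norm $1$, so linearizing $\delta_V$ itself would only produce $\mathrm{Id}_{\Gcb(B_V)}$; instead I want to linearize the ``correct'' map.

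The key observation is that one should not use $\delta_V$ as the section directly, but rather note that $\widehat\iota$ already plays the role of the projection-source. Concretely, consider the completely bounded linear map $j : V \to \Gcb(B_V)$ obtained as follows. Since $\delta_V$ is completely isometric, its restriction is injective on $B_V$, and one checks (as in the classical case) that $\delta_V$ is in fact linear on no subspace, so we cannot take $j=\delta_V$. Instead the classical trick is: define $P := \delta_V \circ (\text{something})$. Let me recall the classical construction precisely: one uses that $\mathcal{G}^\infty(B_X)$ contains an isometric copy of $X$ via the differential of $\delta_X$ at $0$. That is, set $j = d\delta_V(0) : V \to \Gcb(B_V)$, the Fréchet derivative of $\delta_V$ at $0$, which is a bounded --- indeed, by Lemma \ref{lemma-Taylor-polynomials-at-0-are-Schur} applied to $\delta_V \in \Hcb(B_V,\Gcb(B_V))$, \emph{completely bounded} --- linear map with $\|j\|_{\cb} \le \|\delta_V\|_{\Hcb} = 1$. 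Then $\widehat\iota \circ j = d(\widehat\iota\circ\delta_V)(0) = d\,\iota(0) = \mathrm{Id}_V$ since $\iota$ is linear, so $\|j\|_{\cb}\ge 1$ and thus $j$ is completely isometric onto its image, and $P := j \circ \widehat\iota : \Gcb(B_V) \to \Gcb(B_V)$ is a completely contractive idempotent (it is idempotent because $\widehat\iota\circ j = \mathrm{Id}_V$) with range $j(V)$. Hence $j(V)$ is a completely contractively complemented subspace of $\Gcb(B_V)$ completely isometrically isomorphic to $V$.

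The steps, in order: (1) produce $\iota : V \hookrightarrow \Hcb(B_V,V)$ and its linearization $\widehat\iota : \Gcb(B_V)\to V$, $\|\widehat\iota\|_{\cb}=1$, using Remark \ref{remark-linear-cb-is-holomorphic-cb} and Theorem \ref{thm-linearization}; (2) show $\delta_V \in \Hcb(B_V,\Gcb(B_V))$ has a completely bounded derivative $j := d\delta_V(0)$ at $0$ with $\|j\|_{\cb}\le 1$, invoking Lemma \ref{lemma-Taylor-polynomials-at-0-are-Schur} (the $m=1$ Taylor polynomial); (3) compute $\widehat\iota \circ j = \mathrm{Id}_V$ by taking derivatives at $0$ of the identity $\widehat\iota\circ\delta_V = \iota$ and using that $\iota$ is linear (so $d\iota(0)=\mathrm{Id}_V$); (4) conclude $j$ is a complete isometry onto $j(V)$ and $P := j\circ\widehat\iota$ is a completely contractive projection onto it. The main obstacle I anticipate is step (2)--(3): one must be careful that ``differentiating at $0$'' interacts correctly with the operator space structure --- this is exactly what Lemma \ref{lemma-Taylor-polynomials-at-0-are-Schur} guarantees (the first Taylor polynomial of a completely bounded holomorphic function is completely bounded with $\cb$-norm $\le \|f\|_{\Hcb}$) --- and that the chain rule for Fréchet differentials commutes with the linearization, which is routine since $\widehat\iota$ is linear and bounded so $d(\widehat\iota\circ\delta_V)(0) = \widehat\iota\circ d\delta_V(0)$.
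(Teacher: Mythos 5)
Your proposal is correct and follows essentially the same route as the paper: linearize the restriction of $\mathrm{Id}_V$ to get $\widehat{\iota}:\Gcb(B_V)\to V$, take $j=d\delta_V(0)$ (completely contractive by Lemma \ref{lemma-Taylor-polynomials-at-0-are-Schur}), differentiate $\widehat{\iota}\circ\delta_V=\mathrm{Id}_V$ at $0$ to get $\widehat{\iota}\circ j=\mathrm{Id}_V$, and conclude that $j\circ\widehat{\iota}$ is a completely contractive projection onto the completely isometric copy $j(V)$. This is exactly the paper's argument, modulo notation.
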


\begin{proof}
Consider the identity function $Id_V : V \to V$, whose restriction to $B_V$ is a completely bounded holomorphic function since $Id_V$ is a completely bounded linear function. We then get the associated diagram given by the universal property of $\Gcb(B_V)$:
\[
\xymatrix{\Gcb(B_V)\ar[rd]^{\widehat{Id_V}}\\
     B_V\ar[r]^{Id_V}\ar^{\delta_V}[u]& V }
\]
Since all the functions involved are holomorphic, and in fact $Id_V$ and $\widehat{Id_V}$ are linear, taking derivative at $0$ we get
\[
\xymatrix{\Gcb(B_V)\ar[rd]^{\widehat{Id_V}}\\
V\ar[r]^{Id_V}\ar^{d\delta_V(0)}[u]& V }
\]
Note that 
$\bn{\widehat{Id_V}}_{\cb} = \n{Id_V}_{\Hcb} \le 1$ and by Lemma \ref{lemma-Taylor-polynomials-at-0-are-Schur} $\n{d\delta_V(0)}_{\cb} \le \n{\delta_V}_{\Hcb} \le 1$, so $V$ is completely isometric to the subspace $d\delta_V(0)V$ of $\mathcal{G}^\infty_{\cb}(B_V)$.
Moreover, note that $d\delta_V(0)\widehat{Id_V}$ is a completely contractive projection onto $d\delta_V(0)V$.
\end{proof}

We now observe that our operator space $\Gcb(B_V)$ in general cannot be the trivial suggestion $\MAX\big(\mathcal{G}^\infty(B_V)\big)$ mentioned in the introduction. 

\begin{corollary}\label{cor-Gcb-not-maximal}
If $V$ is an operator space which is not maximal, then $\Gcb(B_V)$ cannot be maximal (in particular, it cannot be completely isometrically isomorphic to  $\MAX\big(\mathcal{G}^\infty(B_V)\big)$) and therefore $\Hcb(B_V)$ cannot be minimal.   
\end{corollary}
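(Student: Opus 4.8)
The plan is to reduce everything to two facts about maximal operator spaces: a structural lemma on complemented subspaces, together with the $\MAX$/$\MIN$ duality. The lemma I would isolate first is: \emph{a completely contractively complemented subspace of a maximal operator space is maximal}. To prove it, let $E$ be a closed subspace of $M=\MAX(X)$ (with $X$ the underlying Banach space), carrying the induced operator space structure, and let $P:M\to E$ be a completely contractive projection; write $\iota:E\to M$ for the inclusion, which is a complete isometry. The formal identity $\MAX(\underline E)\to E$ is always completely contractive, so it suffices to show the formal identity $E\to\MAX(\underline E)$ is completely contractive. Factor it as $E\xrightarrow{\iota}\MAX(X)\xrightarrow{\widetilde P}\MAX(\underline E)$, where $\widetilde P$ is $P$ viewed as a map into $\MAX(\underline E)$: by the universal property of $\MAX$ one has $\|\widetilde P\|_{\cb}=\|P:X\to\underline E\|\le\|P\|_{\cb}\le 1$, and since $\|\iota\|_{\cb}=1$ the composition $\widetilde P\circ\iota$, which is the formal identity $E\to\MAX(\underline E)$, is completely contractive. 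Hence $E=\MAX(\underline E)$.

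Granting the lemma, the first assertion is immediate. By Proposition \ref{Prop:complemented subspace}, $V$ is completely isometrically isomorphic to a completely contractively complemented subspace of $\Gcb(B_V)$; so if $\Gcb(B_V)$ were maximal, then this subspace, and hence $V$, would be maximal, contrary to hypothesis. Since any operator space completely isometrically isomorphic to some $\MAX(Z)$ is itself maximal (as $\MAX$ is functorial with respect to isometric isomorphisms), it follows in particular that $\Gcb(B_V)$ cannot be $\MAX(\mathcal{G}^\infty(B_V))$.

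For the final clause I would use that an operator space $W$ is maximal if and only if $W^*$ is minimal. One implication is the standard identity $\MAX(Y)^*=\MIN(Y^*)$; for the converse, if $W^*$ is minimal then the adjoint of the formal identity $W\to\MAX(\underline W)$ is the formal identity $(\MAX(\underline W))^*=\MIN(\underline{W^*})\to W^*=\MIN(\underline{W^*})$, which is a complete isometry, so by $\|u\|_{\cb}=\|u^*\|_{\cb}$ the map $W\to\MAX(\underline W)$ is completely contractive and $W$ is maximal. Applying this with $W=\Gcb(B_V)$, so that $W^*=\Hcb(B_V)$ (as established in the previous sections): were $\Hcb(B_V)$ minimal, $\Gcb(B_V)$ would be maximal, contradicting the first part; hence $\Hcb(B_V)$ is not minimal.

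The main obstacle is really just nailing down the lemma about completely contractively complemented subspaces of maximal operator spaces — but, as sketched, this is a short application of the defining universal property of $\MAX$ (a bounded map out of a $\MAX$ space is automatically completely bounded into any target) combined with the fact that $E$ sits completely isometrically in $M$. Everything else is routine given this and the duality $\MAX(Y)^*=\MIN(Y^*)$; one should only make sure the degenerate case $V=\{0\}$ is excluded or handled trivially.
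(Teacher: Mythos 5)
Your proof is correct and follows essentially the same route as the paper: the paper's proof is the single observation that, by Proposition \ref{Prop:complemented subspace}, maximality of $\Gcb(B_V)$ would force maximality of $V$. You simply make explicit the two facts the paper leaves implicit — that a completely contractively complemented subspace of a maximal operator space is maximal, and that minimality of $W^*$ forces maximality of $W$ via $\MAX(Y)^{*}=\MIN(Y^{*})$ and $\|u\|_{\cb}=\|u^{*}\|_{\cb}$ — and both of your arguments for these are sound.
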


\begin{proof}
Suppose that $\Gcb(B_V)$ is a maximal operator space. By Proposition  \ref{Prop:complemented subspace} we get that $V$ is also maximal, a contradiction.
\end{proof}

\section{Transference of approximation properties between $V$ and $\Gcb(B_V)$}\label{section-OAP}

Among the main results of \cite{Mujica-linearization} is that certain classical approximation properties transfer between $X$ and $\mathcal{G}^\infty(B_X)$. In this section we prove an operator space version of those results, using noncommutative adaptations of the simplification of Mujica's arguments as presented in \cite{aron2023linearization}.

Recall \cite[Sec. 11.2]{Effros-Ruan-book} that an operator space $V$ is said to have the \emph{Operator Approximation Property} if the identity of $V$ is the limit of a net of finite rank maps in $\CB(V,V)$ with respect to the stable point norm topology. For the purposes of the present paper one can take the following result of Webster \cite[Prop. 4.3.1]{Webster} as the definition of the aforementioned topology, which we will denote by $\tau_0$.

\begin{proposition}\label{prop-stable-point-norm}
Let $V$ and $W$ be operator spaces, and $T_\alpha, T \in \CB(V,W)$. The following are equivalent:
\begin{enumerate}[(a)]
    \item $T_\alpha \to T$ in the stable point norm topology on $\CB(V,W)$.
    \item $(T_\alpha)_\infty \to T_\infty$ uniformly on the compact subsets of $\K(V)$.
\end{enumerate}
\end{proposition}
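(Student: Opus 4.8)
The plan is to prove the two implications separately, the first being essentially a tautology and the second a Banach--Steinhaus argument. Recall that, by definition, $T_\alpha \to T$ in the stable point-norm topology of $\CB(V,W)$ means that the stabilizations $\mathrm{id}_{\K}\otimes T_\alpha = (T_\alpha)_\infty$ converge to $T_\infty$ in the ordinary point-norm topology of $\CB(\K(V),\K(W))$, i.e.\ $(T_\alpha)_\infty(u) \to T_\infty(u)$ in $\K(W)$ for every $u \in \K(V)$. With this in mind, $(b)\Rightarrow(a)$ is immediate: every singleton $\{u\}\subseteq\K(V)$ is compact, so uniform convergence of the $\infty$-amplifications on compact subsets of $\K(V)$ forces this pointwise convergence.

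The content is in $(a)\Rightarrow(b)$, and the strategy is the classical one: pointwise convergence together with a uniform bound yields equicontinuity, and equicontinuity together with pointwise convergence yields uniform convergence on compact sets. Concretely, I would first produce a constant $M<\infty$ with $\n{(T_\alpha)_\infty}_{\B(\K(V),\K(W))}\le M$ for all $\alpha$ and $\n{T_\infty}\le M$: the net $\big((T_\alpha)_\infty\big)_\alpha$ of bounded operators on the Banach space $\K(V)$ converges pointwise by $(a)$, hence is pointwise bounded, so the uniform boundedness principle applies; and by the identity $\n{S_\infty}_{\B(\K(V),\K(W))}=\n{S}_{\cb}$ recalled right after Corollary~\ref{corollary-Schwarz} this bound is just $M=\sup_\alpha\n{T_\alpha}_{\cb}$. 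Then, given a compact $K\subseteq\K(V)$ and $\eps>0$, I would cover $K$ by finitely many balls of radius $\eps$ centered at points $u_1,\dots,u_N\in K$, use $(a)$ to choose $\alpha_0$ with $\n{(T_\alpha)_\infty(u_i)-T_\infty(u_i)}<\eps$ for all $i$ and all $\alpha\ge\alpha_0$, and then for arbitrary $u\in K$ pick $u_i$ with $\n{u-u_i}<\eps$ and estimate
\[
\n{(T_\alpha)_\infty(u)-T_\infty(u)} \le \n{(T_\alpha)_\infty}\,\n{u-u_i} + \n{(T_\alpha)_\infty(u_i)-T_\infty(u_i)} + \n{T_\infty}\,\n{u_i-u} \le (2M+1)\eps
\]
for $\alpha\ge\alpha_0$; since $\eps$ was arbitrary, this is $(b)$.

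The step I expect to be the main obstacle is precisely the uniform bound $M$: for a \emph{net} (as opposed to a sequence) pointwise convergence does not by itself automatically entail pointwise boundedness, so the invocation of the uniform boundedness principle needs a word of justification. For the purposes of this paper this is harmless — in the definitions of the completely metric and completely contractive approximation properties the approximating net is bounded by hypothesis, and even for the bare operator approximation property one applies the principle to the pointwise-bounded family $\{(T_\alpha)_\infty\}$ — so that in every situation we care about the amplifications are uniformly bounded on $\K(V)$ and the $\eps$-net argument above goes through verbatim. Everything else in the proof is soft.
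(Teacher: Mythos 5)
Your implication $(b)\Rightarrow(a)$ is fine, and so is the $\eps$-net argument for $(a)\Rightarrow(b)$ \emph{once a uniform bound on $\sup_\alpha \n{T_\alpha}_{\cb}$ is in hand}. But the step you yourself flag as the main obstacle is a genuine gap, not a harmless one. For a net, convergence at each point $u$ only gives \emph{eventual} boundedness of $\n{(T_\alpha)_\infty(u)}$, with the tail depending on $u$; the family $\{(T_\alpha)_\infty\}$ need therefore not be pointwise bounded, and the uniform boundedness principle does not apply. Your fallback, that in every application the net is bounded by hypothesis, is also not accurate: in Proposition \ref{prop-OAP-implies-polynomials-are-tau-gamma-dense} this proposition is applied to the net furnished by the bare OAP, which carries no boundedness assumption. (Note also that the paper does not prove this statement; it quotes it from Webster's thesis, so it genuinely needs the version for arbitrary nets.)

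The statement is nevertheless true for arbitrary nets, but the proof requires a different idea, exploiting that $\infty$-amplifications are $\mathcal{B}(\ell_2)$-bimodule maps: $S_\infty\big((a\otimes 1)u(b\otimes 1)\big)=(a\otimes 1)S_\infty(u)(b\otimes 1)$. Given a compact $Y\subseteq\K(V)$, Grothendieck's compactness criterion places $Y$ inside the closed convex hull of a norm-null sequence $(w_j)$ in $\K(V)$. Conjugating by isometries $U_j:\ell_2\to H_j$ onto pairwise orthogonal infinite-dimensional subspaces produces the block-diagonal element $u_0=\sum_j (U_j\otimes 1)\,w_j\,(U_j^*\otimes 1)$, which lies in $\K(V)$ because $\n{w_j}\to 0$ and the blocks are orthogonal; the bimodule property then gives $\n{S_\infty(u_0)}=\sup_j\n{S_\infty(w_j)}$ for every $S\in\CB(V,W)$. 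Applying this to $S=T_\alpha-T$ yields
\[
\sup_{u\in Y}\n{(T_\alpha)_\infty(u)-T_\infty(u)}\;\le\;\sup_j\n{(T_\alpha)_\infty(w_j)-T_\infty(w_j)}\;=\;\n{(T_\alpha)_\infty(u_0)-T_\infty(u_0)},
\]
so uniform convergence on $Y$ follows from convergence at the \emph{single} point $u_0$, with no boundedness required. This single-point domination is precisely what makes the stable point-norm topology better behaved than the ordinary point-norm topology, for which the analogue of $(a)\Rightarrow(b)$ fails for unbounded nets.
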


Since we are aiming to adapt Mujica's arguments in the classical case, which are based on considering various vector topologies on the spaces involved, we need to define corresponding topologies in our setting. Some of those correspondences are already well established: for example, the topology of uniform convergence over compact subsets on $\mathcal{B}(V,V)$ (which is used to define the classical AP) corresponds to the stable point norm topology on $\CB(V,V)$ (which is used to define the OAP).
Now, Mujica also considers the topology of uniform convergence over compact subsets on $\mathcal{H}^\infty(B_V,W)$.
In analogy with Proposition \ref{prop-stable-point-norm}, we introduce the following definition.

\begin{definition}\label{defn-tau_0}
Let $V$ and $W$ be  operator spaces. Given $f_\alpha, f \in \Hcb(B_V,W)$, we say that $f_\alpha$ converges to $f$ in the topology $\tau_0$ if $(f_\alpha)_\infty \to f_\infty$ uniformly on the compact subsets of $B_{\K(V)}$.
\end{definition}

We emphasize that the above definition makes sense thanks to Corollary \ref{corollary-Schwarz}, which says that the infinite amplification of a function $f \in \Hcb(B_V,W)$ defines a function from $B_{\K(V)}$ to $\K(W)$.
Note also that while at first sight it could look like an abuse of notation to use the symbol $\tau_0$ to denote topologies on the very different spaces $\CB(V,W)$ and $\Hcb(B_V,W)$, from Remark \ref{remark-linear-cb-is-holomorphic-cb} we have a natural completely isometric embedding $\CB(V,W) \hookrightarrow \Hcb(B_V,W)$ and in fact the original topology $\tau_0$ on $\CB(V,W)$ is simply the restriction of the new one defined 
on $\Hcb(B_V,W)$.

The following is a version of Grothendieck's celebrated characterization of compactness in a normed space, and more precisely of the refinement stated in \cite[Lemma 3.4]{aron2023linearization}.

\begin{proposition}\label{prop-Grothendieck-abs-mat-conv}
Suppose that for an operator space $G$ and a matrix set $\mathbf{K}$ over $G$, $\overline{\mathbf{B}}_G = \overline{\amconv}(\mathbf{K})$.
Then, for a given compact set $Y \subset \K(G)$, there exists a sequence  $(a_j x_j)$ with $a_j>0$, $a_j \to 0$ and $x_j \in \mathbf{K}$, such that $Y \subseteq \overline{\amconv\{ a_j x_j \} }^{\K(G)}$.
\end{proposition}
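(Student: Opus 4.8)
The plan is to mimic the classical proof of Grothendieck's lemma: a compact set in a Banach space is contained in the closed absolutely convex hull of a sequence converging to zero. Here we work with the noncommutative analogue, using the matrix-set description $\overline{\mathbf{B}}_G = \overline{\amconv}(\mathbf{K})$ and the characterization of the absolutely matrix convex hull from Lemma \ref{lemma-characterization-abs-mat-convex-hull}. First I would reduce to a single-sequence statement by exhausting $Y$ with finitely many $\eps$-nets. Concretely: since $Y \subseteq \K(G)$ is compact, for each $k\in\N$ pick a finite $2^{-k}$-net $F_k \subseteq Y$ for the norm topology. Each point of $F_k$ can be written, up to error $2^{-k}$, as a finite sum $\sum_i \alpha_i x_i \beta_i$ with $x_i \in \mathbf{K}$ and $\big\|\sum \alpha_i\alpha_i^*\big\|, \big\|\sum\beta_i^*\beta_i\big\| \le 1$ (here I use that $F_k$ lies in a slightly enlarged ball, say $(1+2^{-k})\overline{\mathbf{B}}_G$, hence in $(1+2^{-k})\overline{\amconv}(\mathbf{K})$, and approximate).

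**Assembling the sequence.** The key step is to collect all the elements $x_i \in \mathbf{K}$ appearing in the representations of points in all the $F_k$, and rescale them appropriately so that the scalars tend to zero while the representations remain inside $\overline{\amconv}$ of the rescaled set. Following \cite[Lemma 3.4]{aron2023linearization}, I would organize the $x_i$'s coming from level $k$ with a scalar weight $a$ of order roughly $2^{-k/2}$ (or some summable-to-controlled scheme), so that: (i) the weights $a_j \to 0$, and (ii) any representation $\sum_i \alpha_i x_i \beta_i$ at level $k$ can be rewritten as $\sum_i \widetilde\alpha_i (a_{j(i)} x_{j(i)}) \widetilde\beta_i$ with the new amplitude matrices still satisfying the $\big\|\sum\widetilde\alpha_i\widetilde\alpha_i^*\big\| \le C$, $\big\|\sum\widetilde\beta_i^*\widetilde\beta_i\big\| \le C$ bounds (absorbing the factor $a_{j(i)}^{-1/2}$ into $\widetilde\alpha_i$ and $\widetilde\beta_i$ symmetrically, at the cost of a mild constant depending on $k$ that is compensated by the smallness of the net error). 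This shows $F_k \subseteq \overline{\amconv\{a_j x_j\}}^{\K(G)} + 2^{-k}\,\text{(error)}$. Then a density/telescoping argument over $k$, using that the errors are summable and that $\amconv$ is closed under finite sums at each matrix level, upgrades this to $Y \subseteq \overline{\amconv\{a_j x_j\}}^{\K(G)}$: any $y \in Y$ is a norm-limit of points in $F_k$, each of which is within $\overline{\amconv\{a_jx_j\}}^{\K(G)}$ up to controlled error, and one telescopes the successive approximations into a genuine convergent matrix-convex combination.

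**Main obstacle.** I expect the bookkeeping in the rescaling step to be the delicate part: in the Banach space case one simply splits the scalar coefficient $\lambda$ of $x$ as $\lambda = (\lambda/a)\cdot a$ and checks that $\sum |\lambda_i/a_{j(i)}| \le 1$ remains controlled; in the operator-space case one must instead split the left and right amplitude matrices and verify that the relevant operator norms $\big\|\sum \alpha_i\alpha_i^*\big\|^{1/2}$ and $\big\|\sum\beta_i^*\beta_i\big\|^{1/2}$ stay bounded after inserting the scalars $a_{j(i)}^{-1/2}$. Because these are genuine operator (not just entrywise) norms, one cannot freely redistribute weights, so the scheme assigning weights $a_j$ to blocks of indices coming from a fixed net $F_k$ must be chosen so that within each block the weight is constant — allowing it to be pulled out as a single scalar — while across blocks the weights still decrease to zero. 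This is exactly the refinement in \cite[Lemma 3.4]{aron2023linearization}, and the operator-space adaptation goes through because the constraints $\big\|\sum\alpha_i\alpha_i^*\big\|\le 1$ are invariant under simultaneously scaling all $\alpha_i$ in a block by the same positive constant. Once this is set up correctly, everything else is a routine exhaustion-and-telescoping argument.
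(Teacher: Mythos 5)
Your overall architecture (iterated finite nets plus a telescoping series, modelled on \cite[Lemma 3.4]{aron2023linearization}) is the same as the paper's, but the central quantitative step as you describe it does not work, and it is precisely the step where the whole difficulty of the lemma sits. You propose to cover $Y$ itself by $2^{-k}$-nets $F_k$ and to place each point of $F_k$ into $\overline{\amconv\{a_jx_j\}}$ by rewriting $\sum_i\alpha_i x_i\beta_i$ as $\sum_i(a^{-1/2}\alpha_i)(ax_i)(a^{-1/2}\beta_i)$ with $a\approx 2^{-k/2}$, calling the resulting constant ``mild'' and ``compensated by the smallness of the net error.'' It is neither: the constraint $\bigl\|\sum_i(a^{-1/2}\alpha_i)(a^{-1/2}\alpha_i)^*\bigr\|$ becomes $a^{-1}\|\sum_i\alpha_i\alpha_i^*\|\le a^{-1}=2^{k/2}\to\infty$, and by Lemma \ref{lemma-norm-of-abs-mat-conv-hull} anything genuinely in $\amconv$ of generators of norm $\le a$ has norm $\le a\to 0$, whereas the points of $F_k\subseteq Y$ do not shrink with $k$. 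There is no error term available to absorb this blow-up, so the claimed inclusion $F_k\subseteq\overline{\amconv\{a_jx_j\}}^{\K(G)}+2^{-k}(\text{error})$ fails for large $k$. (Your remark that the constraints are ``invariant under simultaneously scaling all $\alpha_i$ in a block'' is also false: they scale by the square of the constant.)

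What must be approximated with the small-weight generators at stage $n$ is not $Y$ but a \emph{residual} set that shrinks in norm, and the partial approximants must then be recombined with matrix-convex coefficients whose total budget $\|\sum\alpha\alpha^*\|\le1$ is apportioned once and for all across infinitely many stages. This forces geometric coefficients $2^{-n}$, and for the generator weights $a^n_j$ to tend to zero the residuals must shrink \emph{strictly faster} than $2^{-n}$; with your $2^{-k}$-nets the residual at stage $k$ has norm $\sim 2^{-k}$ and the induced generator weight is $2^k\cdot 2^{-k}=O(1)$, which does not tend to zero. The paper resolves this with the doubling construction $Y_{n+1}=\bigcup_{z\in G_n}\bigl(2Y_n\cap\overline{B}(z,\varepsilon_n)-z\bigr)\subseteq\varepsilon_n\overline{B}_{\K(G)}$ for an \emph{arbitrary} null sequence $(\varepsilon_n)$: the element $z_{n+1}$ then lies in $\amconv\{2\varepsilon_n x\}$ with constraint $\le1$ (weights $a^{n+1}=2\varepsilon_n\to0$), the identity $2^{n+1}y-2^nz_1-\cdots-z_{n+1}\in Y_{n+2}$ gives $y=\sum_n z_n/2^n$, and the coefficients $2^{-n}$ supply the convex-combination budget. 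Your proposal never confronts this budget, and the telescoping you invoke cannot be carried out with the weights you specify; this is a genuine gap rather than a routine bookkeeping omission.
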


\begin{proof}
Fix a sequence $(\varepsilon_n)$ of positive numbers converging to 0.
Let us assume, without loss of generality, that $Y_1:=Y \subseteq \overline{B}_{\K(G)}$.
Note that $2Y_1 \subset 2\overline{B}_{\K(G)}$ is compact, therefore there exists a finite set $F_1 \subseteq 2 \overline{B}_{\K(G)}$ such that $2Y \subseteq \bigcup_{z \in F_1} \overline{B}(z,\varepsilon_1/2)$.
Now, for each $z_i \in F_1$ by truncating and using Lemma \ref{lemma-characterization-abs-mat-convex-hull}, there exists a sum of the form $z'_i=\sum_{j=1}^{m_i} \alpha^1_{i,j} 2x^1_{i,j} \beta^1_{i,j}$ with $\sum_{j=1}^{m_i} \alpha^1_{i,j}(\alpha^1_{i,j})^* \le \mathbbm{1}_{n_i}, \sum_{j=1}^{m_i} (\beta^1_{i,j})^*\beta^1_{i,j} \le \mathbbm{1}_{n_i}$, $x^1_{i,j} \in \mathbf{K}$ and $\n{z_i-z'_i} < \varepsilon_1/2$.
If we set $a^1_{i,j} = 2$ for $1 \le i \le |F_1|$ and $1 \le j \le m_i$, what we have is that there exists a finite set $G_1 \subseteq \amconv\{ a^1_{i,j} x^1_{i,j} \}$ such that
$2Y_1 \subseteq \bigcup_{z \in G_1} \overline{B}(z, \varepsilon_1)$.

Now we consider
$Y_2 = \bigcup_{z \in G_1}\big( 2Y_1 \cap \overline{B}(z,\varepsilon_1) - z \big)$.
Notice that $Y_2$ is a compact subset of $\varepsilon_1 \overline{B}_{\K(G)}$.
As before, there exists a finite set $F_2 \subset 2Y_2 \subset 2\varepsilon_1 \overline{B}_{\K(G)}$ 
such that $2Y_2 \subseteq \bigcup_{z \in F_2} \overline{B}(z,\varepsilon_2/2)$.
By the same arguments as before, setting $a^2_{i,j} = 2\varepsilon_1$ for $1 \le i \le |F_2|$, 
there exist $x^2_{i,j} \in \mathbf{K}$ and a finite set $G_2 \subseteq \amconv\{ a^2_{i,j} x^2_{i,j} \}$ such that
$2Y_2 \subseteq \bigcup_{z \in G_2} \overline{B}(z,\varepsilon_2)$.

We continue the process inductively, at each step defining
$Y_{n+1} = \bigcup_{z \in G_n}\big( 2Y_n \cap \overline{B}(z,\varepsilon_n) - z \big) \subseteq \varepsilon_n \overline{B}_{\K(G)}$,
setting $a^{n+1}_{i,j} = 2\varepsilon_n$ and finding 
 $x^{n+1}_{i,j} \in \mathbf{K}$ and a finite set $G_{n+1} \subseteq \amconv\{ a^{n+1}_{i,j} x^{n+1}_{i,j} \}$ such that
$2Y_{n+1} \subseteq \bigcup_{z \in G_{n+1}} \overline{B}(z, \varepsilon_{n+1})$.

We claim that the sequence $(a_jx_j)$ obtained as the concatenation of the blocks $\{ a^n_{i,j} x^n_{i,j} \}$ for $n=1,2,3,\dotsc$ satisfies the desired conclusion. Since $a^n_{i,j} = 2 \varepsilon_n$, we do have $a_j \to 0$.
Now consider  a fixed $y \in Y=Y_1$.
Since $2y \in 2Y_1$, there exists $z_1 \in G_1$ such that $2y-z_1 \in Y_2$.
Then $4y-2z_1 \in 2Y_2$, so there exists $z_2 \in G_2$ such that $4y-2z_1-z_2 \in Y_3$.
Inductively, once we have chosen $z_n \in G_n$ we then find $z_{n+1} \in G_{n+1}$ such that 
$2^{n+1}y -2^{n}z_1 - \cdots -2z_{n} - z_{n+1} \in Y_{n+2}$.

Since all of the $Y_n$ are uniformly bounded by $\sup_j\varepsilon_j$, it follows that the series $\sum_{n=1}^\infty \frac{z_n}{2^n}$ converges to $y$.
The partial sums of this series are finite convex combinations of elements of $\amconv(\mathbf{K})$, which implies that these partial sums are themselves in $\amconv(\mathbf{K})$, and therefore $y \in \overline{\amconv\{ a_j x_j \} }^{\K(G)}$.
\end{proof}

By the linearization property, we can identify  $\Hcb(B_V,W)$ with $\CB(\Gcb(B_V),W)$.
In our next result we describe the topology on the former space that corresponds to $\tau_0$ on the latter one.
Compare to \cite[Thm. 4.8]{Mujica-linearization} (see also \cite[Thm. 3.6]{aron2023linearization}).

\begin{theorem}\label{thm-isomorphismtau-gamma-tau0}
For operator spaces $V$ and $W$, we let $\tau_\gamma$ denote the locally convex vector topology on $\Hcb(B_V,W)$ generated by all the seminorms of the form
\[
p(f) = \sup_{j} a_j \n{ f_{n_j}(x_j) }_{M_{n_j(W)}}
\]
where $n_j \in \N$, $x_j \in B_{M_{n_j}(V)}$, $a_j>0$ for all $j$ and $a_j \to 0$.
Then 
\[
f \in \big( \Hcb(B_V,W), \tau_\gamma \big) \mapsto \widehat{f} \in \big(\CB(\Gcb(B_V),W), \tau_0 \big)
\]
is a topological isomorphism.
\end{theorem}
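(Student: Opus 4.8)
The strategy is to show that the identification $f \mapsto \widehat{f}$ from Theorem \ref{thm-linearization} transports the family of seminorms $p$ defining $\tau_\gamma$ onto a neighborhood base for $\tau_0$ on $\CB(\Gcb(B_V),W)$, and vice versa. By Proposition \ref{prop-stable-point-norm} applied to operators from $\Gcb(B_V)$, a net $\widehat{f_\alpha} \to \widehat{f}$ in $\tau_0$ means $(\widehat{f_\alpha})_\infty \to (\widehat{f})_\infty$ uniformly on compact subsets of $\K(\Gcb(B_V))$. So the two directions to establish are: (1) each $\tau_\gamma$-seminorm $p$ on $\Hcb(B_V,W)$ is $\tau_0$-continuous on $\CB(\Gcb(B_V),W)$; and (2) for each compact $Y \subseteq \K(\Gcb(B_V))$, the seminorm $\widehat{f} \mapsto \sup_{y \in Y}\n{(\widehat{f})_\infty(y)}$ is dominated by some $\tau_\gamma$-seminorm $p$.

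For direction (1): given data $(a_j, x_j)$ with $x_j \in B_{M_{n_j}(V)}$ and $a_j \to 0$, consider the elements $a_j (\delta_V)_{n_j}(x_j) \in M_{n_j}(\Gcb(B_V))$, which by Proposition \ref{prop-delta-is-cb-holomorphic} have norm $a_j\n{x_j}_{M_{n_j}(V)} < a_j \to 0$. Assembling these into a single element of $\K(\Gcb(B_V))$ (using that $a_j\to 0$ to stay in the compact operators), or more precisely taking $Y$ to be the closure of $\{a_j(\delta_V)_{n_j}(x_j)\}_j \cup \{0\}$, we get a compact subset of $\K(\Gcb(B_V))$. Since $(\widehat{f})_\infty$ applied to the block $(\delta_V)_{n_j}(x_j)$ sitting inside $\K(\Gcb(B_V))$ equals $\widehat{f}_{n_j}((\delta_V)_{n_j}(x_j)) = f_{n_j}(x_j)$, we see $p(f) = \sup_j a_j\n{f_{n_j}(x_j)} = \sup_{y \in Y}\n{(\widehat{f})_\infty(y)}$, which is manifestly $\tau_0$-continuous.

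For direction (2), which I expect to be the main obstacle: here we must go from an arbitrary compact $Y \subseteq \K(\Gcb(B_V))$ to a $\tau_\gamma$-seminorm. This is exactly where Proposition \ref{prop-Grothendieck-abs-mat-conv} enters, applied with $G = \Gcb(B_V)$ and $\mathbf{K} = \widetilde{\mathbf{A}}$ the matrix set of normalized evaluations from Proposition \ref{proposition-ball-is-convex-hull-of-image-of-delta} (so that $\overline{\mathbf{B}}_{\Gcb(B_V)} = \overline{\amconv}(\widetilde{\mathbf{A}})$). The proposition yields scalars $b_j > 0$ with $b_j \to 0$ and elements $u_j \in \widetilde{A}_{n_j}$, i.e. $u_j = (\delta_V)_{n_j}(x_j)/\n{x_j}$ for some $x_j \in B_{M_{n_j}(V)}\setminus\{0\}$, such that $Y \subseteq \overline{\amconv\{b_j u_j\}}^{\K(\Gcb(B_V))}$. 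Now one uses that $(\widehat{f})_\infty$ is a completely bounded linear map on $\K(\Gcb(B_V))$, hence bounded, so its image of $\overline{\amconv\{b_j u_j\}}$ is contained in $\overline{\amconv\{b_j (\widehat{f})_\infty(u_j)\}}$ scaled appropriately; combined with Lemma \ref{lemma-norm-of-abs-mat-conv-hull} (the norm of an absolutely matrix convex hull equals the sup of the norm over the generating set) this gives
\[
\sup_{y\in Y}\n{(\widehat{f})_\infty(y)} \le \sup_j b_j \n{(\widehat{f})_\infty(u_j)} = \sup_j b_j\n{f_{n_j}(x_j)}/\n{x_j}.
\]
Setting $a_j = b_j/\n{x_j}$ — still a null sequence since $\n{x_j}$ could approach $1$ but is bounded below only if we are careful; in fact since $b_j \to 0$ and $\n{x_j} \le 1$ we have $b_j/\n{x_j}$ need not tend to $0$, so one instead writes $b_j\n{f_{n_j}(x_j)}/\n{x_j} = b_j\n{f_{n_j}(x_j/\n{x_j} \cdot \n{x_j})}/\n{x_j}$ and uses homogeneity estimates via Corollary \ref{corollary-Schwarz}(b), or more simply notes $\n{f_{n_j}(x_j)}/\n{x_j} \le \n{f}_{\Hcb}$ is uniformly bounded so the relevant seminorm is $p(f) = \sup_j b_j \n{f_{n_j}(\widetilde{x_j})}$ for suitable $\widetilde x_j \in B_{M_{n_j}(V)}$ — the routine bookkeeping shows this is a legitimate $\tau_\gamma$-seminorm dominating $\sup_{y\in Y}\n{(\widehat{f})_\infty(y)}$. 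This establishes that the two topologies have comparable seminorm bases and hence the bijection is a topological isomorphism. The delicate point throughout is keeping track of the matrix-level bookkeeping (the sizes $n_j$ varying with $j$) and ensuring the Grothendieck-type Proposition \ref{prop-Grothendieck-abs-mat-conv} is applicable in the form needed; the rest is a transcription of the classical argument of \cite[Thm. 3.6]{aron2023linearization}.
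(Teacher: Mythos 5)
Your overall strategy is exactly the paper's: the easy direction packages the data $(a_j,x_j)$ of a $\tau_\gamma$-seminorm into the compact set $Y=\{0\}\cup\{a_j(\delta_V)_{n_j}(x_j)\}$ in $\K(\Gcb(B_V))$, and the hard direction runs Proposition \ref{prop-Grothendieck-abs-mat-conv} against the description of $\overline{\mathbf{B}}_{\Gcb(B_V)}$ from Proposition \ref{proposition-ball-is-convex-hull-of-image-of-delta} and then pushes forward through the linear map $\widehat{f}_\infty$ using Lemma \ref{lemma-norm-of-abs-mat-conv-hull}. Direction (1) is fine as written.

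In direction (2), however, your choice of generating set $\mathbf{K}=\widetilde{\mathbf{A}}$ (the \emph{normalized} evaluations) creates a gap that your patches do not close. You correctly arrive at the bound $\sup_{y\in Y}\n{\widehat{f}_\infty(y)}\le \sup_j b_j\n{f_{n_j}(x_j)}/\n{x_j}$, and you correctly observe that $b_j/\n{x_j}$ need not be a null sequence; but the right-hand side is then simply not a $\tau_\gamma$-seminorm, and neither proposed repair works. Writing it as $\sup_j b_j\n{f_{n_j}(\widetilde{x_j})}$ for ``suitable $\widetilde{x_j}$'' would require a point $\widetilde{x_j}\in B_{M_{n_j}(V)}$ with $\n{f_{n_j}(\widetilde{x_j})}\ge \n{f_{n_j}(x_j)}/\n{x_j}$ \emph{simultaneously for all} $f$, which you cannot produce since $f$ is not homogeneous (rescaling $x_j$ does not rescale $f_{n_j}(x_j)$). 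And the bound $\n{f_{n_j}(x_j)}/\n{x_j}\le\n{f}_{\Hcb}$ only shows the expression is finite; $\n{\cdot}_{\Hcb}$ is not $\tau_\gamma$-continuous, so this does not establish domination by a $\tau_\gamma$-seminorm. The fix is a one-line change and is what the paper does: apply Proposition \ref{prop-Grothendieck-abs-mat-conv} with $\mathbf{K}=\mathbf{A}$ (the \emph{unnormalized} evaluations, which Proposition \ref{proposition-ball-is-convex-hull-of-image-of-delta} equally certifies as generating the closed matrix unit ball). Then one gets directly $a_j\to 0$ and $x_j\in B_{M_{n_j}(V)}$ with $Y\subseteq\overline{\amconv\{a_j(\delta_V)_{n_j}(x_j)\}}^{\K(\Gcb(B_V))}$, and $\sup_{y\in Y}\n{\widehat{f}_\infty(y)}\le\sup_j a_j\n{f_{n_j}(x_j)}$ is on the nose a $\tau_\gamma$-seminorm.
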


\begin{proof}
If $Y \subset \K(\Gcb(B_V))$ is a compact set, by Propositions \ref{proposition-ball-is-convex-hull-of-image-of-delta} and  \ref{prop-Grothendieck-abs-mat-conv} there exist sequences  $(a_j)$ and $(x_j)$ with $a_j>0$, $a_j \to 0$, $x_j \in B_{M_{n_j}(V)}$, such that $Y \subseteq \overline{\amconv\{ a_j (\delta_V)_{n_j}(x_j) \} }^{\K(G)}$.
Then, for $f \in \Hcb(B_V,W)$ we have, using Lemma \ref{lemma-norm-of-abs-mat-conv-hull}
\[
\sup _{y \in Y} \n{ \widehat f_\infty(y) }_{\K(W)} \le \sup _{y \in \amconv\{ a_j (\delta_V)_{n_j}(x_j) \}} \n{ \widehat f_\infty(y) }_{\K(W)}  = \sup _{j} a_j \n{ f_{n_j}(x_j) }_{M_{n_j}(W)}
\]
which shows that the map $f \mapsto \widehat{f}$ is $\tau_\gamma$-$\tau_0$ continuous.

Now consider a seminorm $p$ as in the definition of $\tau_\gamma$.
Note that the associated sequence $\big( a_j (\delta_V)_{n_j}(x_j) \big)$ converges to $0$ in $\K(\Gcb(B_V))$.
Therefore, $Y = \{0\} \cup \big\{ a_j (\delta_V)_{n_j}(x_j) \big\}$ is a compact set in $\K(\Gcb(B_V))$, and thus for $f \in \Hcb(B_V,W)$
\[
p(f) =  \sup _{j} a_j \n{ f_{n_j}(x_j) }_{M_{n_j}(W)} = \sup _{y \in Y} \n{ \widehat{f}_\infty(y) }_{\K(W)} 
\]
which shows the $\tau_0$-$\tau_\gamma$ continuity of the inverse mapping.
\end{proof}

\begin{remark}\label{remark-tau-gamma-with-sequences-bounded-by-1}
Note that in the definition of $\tau_\gamma$ above one can add the extra condition that $a_j \le 1$ for all $j\in\N$, and this still defines the same topology.     
\end{remark}

\begin{remark}\label{rmk:tau_0 topology}
For a bounded net $(T_\alpha)$ in $\CB(V,W)$, pointwise convergence implies convergence in the $\tau_0$ topology.
To see this, suppose such a net converges pointwise to $T \in \CB(V,W)$.
First, by truncating (and using the boundedness of $(T_\alpha)$) one can prove that the amplifications $(T_\alpha)_\infty$ converge pointwise to $T_\infty$ on $\K(V)$.
Since the amplifications $(T_\alpha)_\infty$ are also uniformly bounded, the standard argument taking a fine enough finite subset shows that $(T_\alpha)_\infty$ will converge uniformly to $T_\infty$ on compact subsets of $\K(V)$.

Therefore, for a bounded net $(f_\alpha)$ in $\Hcb(B_V,W)$ pointwise convergence will imply $\tau_\gamma$ convergence.
To see this, suppose that $f$ is the pointwise limit of $(f_\alpha)$. By linearity, it follows that $\widehat{f}_\alpha(u)$ converges to $\widehat{f}(u)$ for each $u \in \spa\{ \delta_V(x) \}_{x \in B_V}$. Since the net $(f_\alpha)$ is bounded so is $(\widehat{f}_\alpha)$, which yields that $\widehat{f}_\alpha(u)$ converges to $\widehat{f}(u)$ for all $u \in \Gcb(B_V)$. By the previous paragraph $(\widehat{f}_\alpha)$ converges to $\widehat{f}$ in $\tau_0$, that is,  $(f_\alpha)$ converges to $f$ in $\tau_\gamma$.
\end{remark}

Let $\mathcal P_{s,0}(V,W)$ the operator subspace of $\mathcal P_{s}(V,W)=\bigcup_m \mathcal P_{s}(^mV,W)$ consisting of polynomials $P$ such that $P(0)=0$. Note that $\mathcal P_{s,0}(V,W)$ is completely isometrically included in $\Hcb(B_V,W)$. We also denote by $\mathcal P_{s,f,0}(V,W)$ the subspace of \textit{finite type} polynomials in $\mathcal P_{s,0}(V,W)$.

Recall that an operator space $V$ is said to have the \emph{Completely Metric Approximation Property (CMAP)} if there exists a net of finite rank complete contractions in $\CB(V,V)$ that converges pointwise to the identity of $V$.
Before proving that this property transfers between $V$ and $\Gcb(B_V)$, we start with a preparatory lemma.

\begin{lemma}\label{lemma:approximation-by-polynomials}
    Let $V$ and $W$ be operator spaces. Then
    \begin{enumerate}[(a)]
        \item For $f\in\overline{B}_{\Hcb(B_V,W)}$ there exists a sequence $(\sigma_mf)\subset 
         \overline{B}_{\mathcal P_{s,0}(V,W)}$ such that $\sigma_mf(x)\to f(x)$ for all $x\in B_V$.
        \item $\overline{B}_{\Hcb(B_V,W)}= \overline{B_{\mathcal P_{s,0}(V,W)}}^{\tau_\gamma}$.
        \item If $V$ has the CMAP then $\overline{B}_{\Hcb(B_V,W)}= \overline{B_{\mathcal P_{s,f,0}(V,W)}}^{\tau_\gamma}$.
    \end{enumerate}
\end{lemma}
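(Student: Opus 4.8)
The plan is to build the approximating polynomials in two layers: Ces\`aro (Fej\'er) means of the Taylor series at $0$, which take care of (a) and (b), followed by a precomposition with finite‑rank complete contractions supplied by the CMAP, which takes care of (c). For part (a) I would write $f=\sum_{k\ge1}P^{[k]}$ for the Taylor expansion of $f$ at $0$ (recall $f(0)=0$ by Remark \ref{remark-0-at-0}) and set $\sigma_m f=\sum_{k=1}^m\bigl(1-\tfrac{k}{m+1}\bigr)P^{[k]}$. Each $P^{[k]}$ is a Schur polynomial by Lemma \ref{lemma-Taylor-polynomials-at-0-are-Schur}, so $\sigma_m f\in\mathcal P_{s,0}(V,W)$. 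To see $\n{\sigma_m f}_{\Hcb}\le\n{f}_{\Hcb}$, fix $n$ and $(x_{ij})\in B_{M_n(V)}$ and consider $\phi(\lambda)=f_n(\lambda(x_{ij}))$; by Lemma \ref{lemma-amplifications-are-holomorphic} this is holomorphic on a disc strictly containing $\overline{\D}$, it equals $\sum_{k\ge1}\lambda^k(P^{[k]})_n((x_{ij}))$, and $\n{\phi(\lambda)}\le\n{f}_{\Hcb}$ whenever $|\lambda|\le 1$. Now $(\sigma_m f)_n((x_{ij}))$ is exactly the value at $\lambda=1$ of the $m$-th Ces\`aro mean of this power series, i.e. the convolution of $\phi|_{\partial\D}$ with the Fej\'er kernel $K_m$; since $K_m\ge0$ and $\tfrac1{2\pi}\int K_m=1$, we get $\n{(\sigma_m f)_n((x_{ij}))}\le\sup_{|\lambda|=1}\n{\phi(\lambda)}\le\n{f}_{\Hcb}$. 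Taking the supremum over $n$ and $(x_{ij})$ shows $\sigma_m f\in\overline{B}_{\mathcal P_{s,0}(V,W)}$, and the pointwise convergence $\sigma_m f(x)\to f(x)$ on $B_V$ is Fej\'er's theorem applied to the function $\lambda\mapsto f(\lambda x)$, which is continuous on $\overline{\D}$.

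For part (b), the inclusion $\overline{B_{\mathcal P_{s,0}(V,W)}}^{\tau_\gamma}\subseteq\overline{B}_{\Hcb(B_V,W)}$ follows once one observes that $\overline{B}_{\Hcb(B_V,W)}$ is $\tau_\gamma$-closed: for each $n$ and each $x\in B_{M_n(V)}$ the seminorm $g\mapsto\n{g_n(x)}$ is dominated by a generating seminorm of $\tau_\gamma$ (take the constant sequence $x_j\equiv x$ with $a_1=1$, $a_j=1/j$), so any $\tau_\gamma$-limit of elements of $\overline{B}_{\Hcb}$ still satisfies the defining matricial norm bounds; and $B_{\mathcal P_{s,0}(V,W)}\subseteq\overline{B}_{\Hcb(B_V,W)}$ since the inclusion of $\mathcal P_{s,0}(V,W)$ into $\Hcb(B_V,W)$ is isometric. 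For the reverse inclusion, given $f\in\overline{B}_{\Hcb(B_V,W)}$ I would use the bounded net $\bigl((1-\tfrac1j)\sigma_m f\bigr)_{(m,j)}$, which lies in $B_{\mathcal P_{s,0}(V,W)}$ and converges to $f$ pointwise on $B_V$ (by part (a), together with the estimate $\n{\sigma_m f(x)}\le\n{x}$ from Corollary \ref{corollary-Schwarz}(a) to absorb the rescaling error); by Remark \ref{rmk:tau_0 topology} it then converges to $f$ in $\tau_\gamma$.

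For part (c), again only $\overline{B}_{\Hcb(B_V,W)}\subseteq\overline{B_{\mathcal P_{s,f,0}(V,W)}}^{\tau_\gamma}$ needs new input (the reverse inclusion is once more the $\tau_\gamma$-closedness of $\overline{B}_{\Hcb(B_V,W)}$). Let $(S_\lambda)_{\lambda\in\Lambda}$ be a net of finite‑rank complete contractions on $V$ converging pointwise to $Id_V$, fix $f\in\overline{B}_{\Hcb(B_V,W)}$, and for $m,n\in\N$ and $\lambda\in\Lambda$ set $Q_{m,n,\lambda}=\sigma_m f\circ\bigl((1-\tfrac1n)S_\lambda\bigr)$. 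Since $(1-\tfrac1n)S_\lambda$ is finite rank and $\sigma_m f$ is a polynomial, $Q_{m,n,\lambda}$ is a finite‑type polynomial vanishing at $0$; and since $\n{(1-\tfrac1n)S_\lambda}_{\cb}\le 1-\tfrac1n$, Corollary \ref{corollary-Schwarz}(a) gives $\n{Q_{m,n,\lambda}}_{\Hcb}\le(1-\tfrac1n)\n{\sigma_m f}_{\Hcb}<1$, so $Q_{m,n,\lambda}\in B_{\mathcal P_{s,f,0}(V,W)}$. Indexing by $(\lambda,m,n)\in\Lambda\times\N\times\N$ with the product order, one checks $Q_{m,n,\lambda}\to f$ pointwise on $B_V$: for fixed $x$ and $\varepsilon>0$, uniform boundedness of the family $(\sigma_m f)$ yields, via the Cauchy estimates, equicontinuity on a compact ball around $x$, which absorbs the error from $(1-\tfrac1n)S_\lambda x\to x$, while part (a) handles $\sigma_m f(x)\to f(x)$. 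The net is bounded, so Remark \ref{rmk:tau_0 topology} upgrades pointwise convergence to $\tau_\gamma$-convergence, giving $f\in\overline{B_{\mathcal P_{s,f,0}(V,W)}}^{\tau_\gamma}$.

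The main obstacle is the norm estimate in part (a): Taylor partial sums of a bounded holomorphic function on an infinite‑dimensional ball need not be uniformly bounded, so one cannot just truncate; the essential point is to pass to Fej\'er means and exploit the positivity of the Fej\'er kernel, after reducing the matricial estimate to the scalar one‑variable situation through the amplifications $f_n$ (Lemma \ref{lemma-amplifications-are-holomorphic}) and the identification of the Taylor polynomials of $f_n$ with the amplifications of those of $f$ (as in Lemma \ref{lemma-Taylor-polynomials-at-0-are-Schur}). In part (c) the only delicate bookkeeping is keeping all approximants inside the \emph{open} ball of $\mathcal P_{s,f,0}(V,W)$ while still approximating $f$; this is arranged by the damping factors $1-\tfrac1n<1$ combined with Corollary \ref{corollary-Schwarz}(a).
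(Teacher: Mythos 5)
Your proof is correct and is essentially the paper's: part (a) is the Fej\'er/Ces\`aro-mean argument (precisely the adaptation of Mujica's Prop.~5.2 that the paper invokes, with Lemma \ref{lemma-Taylor-polynomials-at-0-are-Schur} identifying the Taylor coefficients of the amplifications), and parts (b) and (c) rest on Remark \ref{rmk:tau_0 topology} to upgrade bounded pointwise convergence to $\tau_\gamma$-convergence, together with precomposition by the finite-rank complete contractions furnished by the CMAP. The only difference is organizational: in (c) the paper first reduces, via (b), to a single $P\in B_{\mathcal P_{s,0}(V,W)}$ and takes the net $P\circ T_\alpha$ (where no damping factor is needed, since $\n{P\circ T_\alpha}_{\Hcb}\le\n{P}_{\Hcb}<1$ automatically), whereas you compose the Ces\`aro means directly with damped contractions and control a three-index net via equicontinuity from the Cauchy estimates --- both routes work, and along the way you make explicit the $\tau_\gamma$-closedness of $\overline{B}_{\Hcb(B_V,W)}$ and the open-versus-closed ball rescaling that the paper leaves implicit.
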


 \begin{proof}
     (a) Is just an adaptation of the proof of \cite[Prop. 5.2]{Mujica-linearization}, using Lemma \ref{lemma-Taylor-polynomials-at-0-are-Schur} to control the $\Hcb$-norm of the Taylor polynomials of $f$ at $0$.
    
(b) From (a) we have a bounded sequence in $\mathcal P_{s,0}(V,W) \subset \Hcb(B_V,W)$ converging pointwise to $f$, Remark \ref{rmk:tau_0 topology} now yields that the convergence is also in $\tau_\gamma$.

(c) By (b) it is enough to consider $P\in B_{\mathcal P_{s,0}(V,W)}$ and prove that it can be approximated in the $\tau_0$ topology by a net in $B_{\mathcal P_{s,f,0}(V,W)}$. Since $V$ has the CMAP we know that there is a net $(T_\alpha)\subset \overline B_{\CB(V,V)}$ of finite rank mappings such that $\|T_\alpha (x)-x\|\to 0$ for every $x\in V$. Then, the net $(P\circ T_\alpha)\subset B_{\mathcal P_{s,f,0}(V,W)}$ does the job:
clearly each $P \circ T_\alpha$ is a finite type polynomial whose $\Hcb$-norm is at most that of $P$, so just as in the previous paragraph pointwise convergence implies $\tau_\gamma$ convergence by Remark \ref{rmk:tau_0 topology}.
\end{proof}

At this point we can prove the desired transference result, compare to \cite[Prop. 5.7]{Mujica-linearization}.

\begin{theorem}\label{thm-CMAP-transfers}
    $V$ has the CMAP if and only if $\Gcb(B_V)$ has the CMAP.
\end{theorem}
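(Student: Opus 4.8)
The plan is to prove the two implications separately, following the template of \cite[Prop. 5.7]{Mujica-linearization} but replacing each classical ingredient by its operator-space analogue established earlier.

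For the implication ``$\Gcb(B_V)$ has the CMAP $\Rightarrow$ $V$ has the CMAP'': this is the easy direction. By Proposition \ref{Prop:complemented subspace}, $V$ is completely isometrically isomorphic to a completely contractively complemented subspace of $\Gcb(B_V)$; explicitly, $d\delta_V(0)\,\widehat{Id_V}$ is a completely contractive projection of $\Gcb(B_V)$ onto $d\delta_V(0)V \cong V$. It is a standard fact that the CMAP passes to completely contractively complemented subspaces (if $(S_\alpha)$ is a net of finite-rank complete contractions on $\Gcb(B_V)$ converging pointwise to the identity, and $P$ is the completely contractive projection with range $V$, then $P S_\alpha\,\iota$ — where $\iota$ is the inclusion — is a net of finite-rank complete contractions on $V$ converging pointwise to $Id_V$), so this direction follows immediately.

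For the converse ``$V$ has the CMAP $\Rightarrow$ $\Gcb(B_V)$ has the CMAP'': the goal is to produce a net of finite-rank complete contractions on $\Gcb(B_V)$ converging pointwise to the identity. First I would use Lemma \ref{lemma:approximation-by-polynomials}(c): since $V$ has the CMAP, $\delta_V \in \overline{B}_{\Hcb(B_V,\Gcb(B_V))}$ (it has $\Hcb$-norm $1$ by Proposition \ref{prop-delta-is-cb-holomorphic}), so $\delta_V$ is in the $\tau_\gamma$-closure of $B_{\mathcal{P}_{s,f,0}(V,\Gcb(B_V))}$. Thus there is a net $(Q_\alpha)$ of finite-type polynomials $B_V \to \Gcb(B_V)$, each with $\Hcb$-norm $\le 1$, converging to $\delta_V$ in $\tau_\gamma$. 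Linearize each $Q_\alpha$ via Theorem \ref{thm-linearization} to get $\widehat{Q_\alpha} \in \CB(\Gcb(B_V),\Gcb(B_V))$ with $\|\widehat{Q_\alpha}\|_{\cb} = \|Q_\alpha\|_{\Hcb} \le 1$. Because each $Q_\alpha$ is a finite-type polynomial its image lies in a finite-dimensional subspace of $\Gcb(B_V)$, so by Remark \ref{remark-linealization-preserves-properties} each $\widehat{Q_\alpha}$ is a finite-rank operator. It remains to check that $\widehat{Q_\alpha} \to Id_{\Gcb(B_V)}$ pointwise: by Theorem \ref{thm-isomorphismtau-gamma-tau0}, $\tau_\gamma$-convergence of $Q_\alpha \to \delta_V$ translates to $\tau_0$-convergence $\widehat{Q_\alpha} \to \widehat{\delta_V}$ in $\CB(\Gcb(B_V),\Gcb(B_V))$; and $\widehat{\delta_V} = Id_{\Gcb(B_V)}$ since it is the linearization of $\delta_V$ and agrees with the identity on the (dense-span) set $\{\delta_V(x)\}_{x\in B_V}$. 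Finally $\tau_0$-convergence implies pointwise (stable point-norm) convergence, which is exactly the CMAP condition for $\Gcb(B_V)$.

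The main obstacle I anticipate is bookkeeping rather than anything deep: one must be careful that the polynomials approximating $\delta_V$ really have operator-valued (not scalar-valued) targets and that finite-type-ness of $Q_\alpha$ genuinely forces finite-rankness of $\widehat{Q_\alpha}$ — this is where Remark \ref{remark-linealization-preserves-properties} is used, and it in turn rests on $\Gcb(B_V) = \overline{\spa}\{\delta_V(x)\}_{x\in B_V}$. One should also double-check that the $\cb$-norm bound $\|\widehat{Q_\alpha}\|_{\cb} \le 1$ survives (it does, since the linearization is a completely isometric identification $\Hcb(B_V,W) = \CB(\Gcb(B_V),W)$), so that the approximating net consists of genuine complete contractions. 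Everything else is a faithful transcription of Mujica's argument through the dictionary: uniform convergence on compacts $\leftrightarrow$ $\tau_0$, the classical seminorm topology on $\mathcal{H}^\infty$ $\leftrightarrow$ $\tau_\gamma$, finite-rank operators $\leftrightarrow$ finite-rank cb maps, and the MAP $\leftrightarrow$ CMAP.
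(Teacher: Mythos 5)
Your proposal is correct and follows essentially the same route as the paper: both directions rest on Proposition \ref{Prop:complemented subspace} for the easy implication, and on Lemma \ref{lemma:approximation-by-polynomials}(c) plus the linearization Theorem \ref{thm-linearization} (with Remark \ref{remark-linealization-preserves-properties} guaranteeing finite rank) for the converse. The only cosmetic difference is at the last step: the paper deduces pointwise convergence of $\widehat{P_\alpha}$ to the identity directly from pointwise convergence of $P_\alpha$ to $\delta_V$ together with boundedness of the net and density of $\spa\{\delta_V(x)\}_{x\in B_V}$, whereas you route through the topological isomorphism of Theorem \ref{thm-isomorphismtau-gamma-tau0} and then observe that $\tau_0$-convergence implies pointwise convergence --- both arguments are valid.
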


\begin{proof}
    If $\Gcb(B_V)$ has the CMAP, by Proposition \ref{Prop:complemented subspace} it is clear that $V$ also has this property.

    Now, suppose that $V$ has the CMAP and consider the mapping $\delta_V\in \overline B_{\Hcb(B_V,\Gcb(B_V))}$. We know from Lemma \ref{lemma:approximation-by-polynomials} that there exists a net $(P_\alpha)\subset B_{\mathcal P_{s,f,0}(V,\mathcal{G}^\infty_{\cb}(B_V))}$ such that $\|P_\alpha (x)-\delta_V(x)\|\to 0$ for every $x\in B_V$. Linearizing these polynomials we obtain finite rank completely bounded maps $(\widehat{P_\alpha})$ with $\|\widehat{P_\alpha}\|_{\cb}\le 1$ such that the following diagram commutes
\[
\xymatrix{\mathcal{G}^\infty_{\cb}(B_V)\ar@{-->}[rd]^{\widehat{P_\alpha}}\\
     B_V\ar[r]^{P_\alpha}\ar^{\delta_V}[u]& \mathcal{G}^\infty_{\cb}(B_V) }
\]  

Note that $\widehat{P_\alpha}(\delta_V(x))\to \delta_V(x)$, for every $x\in B_V$. Then, we have that $\widehat{P_\alpha}(u)\to u$ for each $u\in \spa\{ \delta_V(x) \}_{x \in B_V}$.  Since the net $(\widehat{P_\alpha})$ is bounded, we also get pointwise convergence on the closure of $\spa\{ \delta_V(x) \}_{x \in B_V}$. Hence, $\mathcal{G}^\infty_{\cb}(B_V)$
 has the CMAP.
\end{proof}

To reach the more involved argument about the transference of OAP from $V$ to $\Gcb(B_V)$ we need some preparatory results about the action of the $\tau_\gamma$-topolgy on $\Hcb(B_V,W)$. The next step is an adaptation of \cite[Prop. 3.8]{aron2023linearization}, see also \cite[Prop. 4.9]{Mujica-linearization}.

\begin{proposition}\label{prop-topologies-equivalent-on-homogeneous-polynomials}
Let $V$ and $W$ be operator spaces.
Then $\tau_\gamma$ is finer than $\tau_0$ on $\Hcb(B_V,W)$, and for each $m\in\N$ these two topologies are equivalent on $\mathcal{P}_{s,0}(^mV,W)$.
\end{proposition}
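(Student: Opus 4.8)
The plan is to verify both assertions seminorm-by-seminorm, using the topological isomorphism $f \mapsto \widehat f$ from Theorem~\ref{thm-isomorphismtau-gamma-tau0} as the bridge between the two incarnations of $\tau_0$ (on $\Hcb(B_V,W)$ and on $\CB(\Gcb(B_V),W)$). For the first assertion it suffices to show that every generating seminorm of $\tau_0$ on $\Hcb(B_V,W)$, namely $q_K(f) = \sup_{x \in K} \n{f_\infty(x)}_{\K(W)}$ with $K \subseteq B_{\K(V)}$ compact, is $\tau_\gamma$-continuous. First I would note that, entrywise, $f_\infty = \widehat f_\infty \circ (\delta_V)_\infty$ on $B_{\K(V)}$, since $\widehat f \circ \delta_V = f$. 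By Corollary~\ref{corollary-Schwarz}(c) the map $(\delta_V)_\infty : B_{\K(V)} \to \K(\Gcb(B_V))$ is holomorphic, in particular continuous, so $L := (\delta_V)_\infty(K)$ is a compact subset of $\K(\Gcb(B_V))$. Hence $q_K(f) = \sup_{y \in L} \n{\widehat f_\infty(y)}_{\K(W)}$, which by Proposition~\ref{prop-stable-point-norm} is a generating seminorm of the stable point norm topology $\tau_0$ on $\CB(\Gcb(B_V),W)$; composing with the ($\tau_\gamma$-$\tau_0$)-continuous map $f \mapsto \widehat f$ shows $q_K$ is $\tau_\gamma$-continuous. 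This proves $\tau_\gamma \supseteq \tau_0$ on all of $\Hcb(B_V,W)$.

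For the second assertion, by what was just proved it remains to show that, for fixed $m$, every generating $\tau_\gamma$-seminorm is $\tau_0$-continuous on $\mathcal{P}_{s,0}(^mV,W)$. By Remark~\ref{remark-tau-gamma-with-sequences-bounded-by-1} such a seminorm may be taken to be $p(f) = \sup_j a_j \n{f_{n_j}(x_j)}_{M_{n_j}(W)}$ with $0 < a_j \le 1$, $a_j \to 0$, and $x_j \in B_{M_{n_j}(V)}$. The key point is a homogeneity rescaling: put $t_j = a_j^{1/m} \in (0,1]$, so $t_j \to 0$; since the amplification $P_{n_j}$ of an $m$-homogeneous polynomial $P$ is again $m$-homogeneous (immediate from $P_{n_j}(\lambda y) = (P(\lambda y_{kl})) = \lambda^m P_{n_j}(y)$), we get $a_j \n{P_{n_j}(x_j)}_{M_{n_j}(W)} = \n{P_{n_j}(t_j x_j)}_{M_{n_j}(W)}$. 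Under the inclusion $M_{n_j}(V) \hookrightarrow \K(V)$ the norm is preserved, so $\n{t_j x_j}_{\K(V)} = t_j\n{x_j} \le t_j \to 0$ and $t_j x_j \in B_{\K(V)}$; thus $K := \{0\} \cup \{ t_j x_j : j \in \N\}$ is a compact subset of $B_{\K(V)}$. Identifying $P_{n_j}(t_j x_j) \in M_{n_j}(W)$ with $P_\infty(t_j x_j) \in \K(W)$ (again a norm-preserving inclusion), we obtain $p(P) = \sup_j \n{P_\infty(t_j x_j)}_{\K(W)} \le \sup_{x \in K} \n{P_\infty(x)}_{\K(W)} = q_K(P)$, so $p$ is $\tau_0$-continuous on $\mathcal{P}_{s,0}(^mV,W)$. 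Together with the first part this gives $\tau_\gamma = \tau_0$ there.

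I expect the main care to go into the rescaling step of the second part: one must ensure the points $t_j x_j$ really form a compact set sitting inside the \emph{open} ball $B_{\K(V)}$, which is exactly where the normalization $a_j \le 1$ from Remark~\ref{remark-tau-gamma-with-sequences-bounded-by-1}, the fact that the $x_j$ lie in the open balls $B_{M_{n_j}(V)}$, and the isometric compatibility of the matrix norms with the inclusions $M_{n_j}(V) \hookrightarrow \K(V)$ all enter. In the first part the subtlety is conceptual rather than computational: since $f_\infty$ is an amplification of a genuinely holomorphic (nonlinear) function it cannot be pushed through absolutely matrix convex hulls the way a linear map can, which is precisely why passing through the linearization $\widehat f$ and the topological isomorphism of Theorem~\ref{thm-isomorphismtau-gamma-tau0} is the natural route.
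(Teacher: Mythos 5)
Your proposal is correct and follows essentially the same route as the paper: the second part is the identical homogeneity rescaling $a_j\|P_{n_j}(x_j)\| = \|P_{n_j}(a_j^{1/m}x_j)\|$ with the compact set $\{0\}\cup\{a_j^{1/m}x_j\}\subset B_{\K(V)}$, and your first part, which factors $q_K$ through $f\mapsto\widehat f$ and Theorem \ref{thm-isomorphismtau-gamma-tau0}, merely packages the paper's direct argument (push $K$ forward by $(\delta_V)_\infty$, then apply Propositions \ref{proposition-ball-is-convex-hull-of-image-of-delta} and \ref{prop-Grothendieck-abs-mat-conv} together with Lemma \ref{lemma-norm-of-abs-mat-conv-hull}), since that is exactly what the cited theorem's proof does. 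Both steps are sound, including the care you take that $a_j\le 1$ guarantees the rescaled points stay in the open ball.
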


\begin{proof}
Notice that it follows from Proposition \ref{prop-delta-is-cb-holomorphic} and Corollary \ref{corollary-Schwarz} that $(\delta_V)_\infty$ is continuous from $B_{\K(V)}$ to $B_{\K(\Gcb(B_V))}$.
Therefore, if $Y \subset B_{\K(V)}$ is compact, so is $(\delta_V)_\infty(Y) \subset B_{\K(\Gcb(B_V))}$.
By Propositions \ref{proposition-ball-is-convex-hull-of-image-of-delta} and  \ref{prop-Grothendieck-abs-mat-conv}, there exist sequences  $(a_j)$ and $(x_j)$ with $a_j>0$, $a_j \to 0$, $x_j \in B_{M_{n_j}(V)}$, such that $(\delta_V)_\infty(Y) \subseteq \overline{\amconv \{ a_j (\delta_V)_{n_j}(x_j) \} }^{\K(\Gcb(B_V))}$.
Therefore, for each $f\in\Hcb(B_V,W)$ we have,  using Lemma \ref{lemma-norm-of-abs-mat-conv-hull} once more, that
\[
\sup _{y \in Y} \n{ f_\infty(y) }_{\K(W)} \le \sup _{j} a_j \n{ f_{n_j}(x_j) }_{M_n(W)}
\]
and this proves that $\tau_\gamma$ is finer than $\tau_0$ on $\Hcb(B_V,W)$.

Now consider one of the seminorms that define $\tau_\gamma$:
\[
p(f) = \sup_{j} a_j \n{ f_{n_j}(x_j) }_{M_{n_j(W)}}
\]
where $n_j \in \N$, $x_j \in B_{M_{n_j}(V)}$, $a_j>0$ for all $j$ and $a_j \to 0$.\
By Remark \ref{remark-tau-gamma-with-sequences-bounded-by-1}, we can additionally assume that $a_j \le 1$ for all $j\in\N$.
For a homogeneous Schur polynomial $P\in \mathcal{P}_{s,0}(^mV,W)$ we have
\[
p(P) = \sup_j a_j \n{ P_{n_j}(x_j) } = \sup_j \n{  P_{n_j}(a_j^{1/m} x_j) } = \sup _{y \in Y} \n{ P_\infty(y) }
\]
where $Y = \{0\} \cup \big\{ a_j (\delta_V)_{n_j}(x_j) \big\}$ is a compact set in $B_{\K(V)}$ (note that this is where we need the additional assumption $a_j \le 1$, otherwise $a_j (\delta_V)_{n_j}(x_j)$ would not be guaranteed to be in $B_{\K(V)}$);
this shows that  $\tau_0$ is finer than $\tau_\gamma$ on  $\mathcal{P}_{s,0}(^mV,W)$.
\end{proof}

Next we adapt \cite[Prop. 3.9]{aron2023linearization}, see also \cite[Lemma 5.3]{Mujica-linearization}.

\begin{proposition} \label{prop-OAP-implies-polynomials-are-tau-gamma-dense}
	If $V$ has the OAP, for any $f\in \Hcb(B_V,W)$ there exists a net $(P_\alpha) $ of finite type Schur polynomials in $\mathcal{P}_{s,f,0}(V,W)$ such that $P_\alpha\overset{\tau_\gamma}{\to} f$.
\end{proposition}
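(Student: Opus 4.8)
The plan is to reduce, in two steps, to a statement about homogeneous polynomials and then to exploit the OAP by precomposing with the approximating net.

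First I would reduce to polynomials: by Lemma~\ref{lemma:approximation-by-polynomials}(b) (after rescaling, using $\Hcb(B_V,W)=\bigcup_{\lambda>0}\lambda\overline{B}_{\Hcb(B_V,W)}$ and that $\tau_\gamma$ is a vector topology) the subspace $\mathcal{P}_{s,0}(V,W)$ is $\tau_\gamma$-dense in $\Hcb(B_V,W)$, so it suffices to show that every $P\in\mathcal{P}_{s,0}(V,W)$ lies in the $\tau_\gamma$-closure of $\mathcal{P}_{s,f,0}(V,W)$. Writing $P=\sum_{m=1}^N P^{[m]}$ with $P^{[m]}\in\mathcal{P}_{s,0}(^mV,W)$ and again using that $\tau_\gamma$ is a vector topology, I may further assume $P$ is $m$-homogeneous. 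Finally, by Proposition~\ref{prop-topologies-equivalent-on-homogeneous-polynomials} the topologies $\tau_\gamma$ and $\tau_0$ coincide on $\mathcal{P}_{s,0}(^mV,W)$, so it is enough to produce a net of finite type Schur polynomials converging to $P$ in $\tau_0$.

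To build this net, use the OAP to fix a net $(T_\alpha)$ of finite rank maps in $\CB(V,V)$ with $(T_\alpha)_\infty\to\mathrm{Id}_{\K(V)}$ uniformly on compact subsets of $\K(V)$ (Proposition~\ref{prop-stable-point-norm}), and set $P_\alpha:=P\circ T_\alpha$. Each $P_\alpha$ is $m$-homogeneous and vanishes at $0$; it is of finite type because $\mathrm{range}(T_\alpha)$ is finite dimensional (a polynomial on a finite dimensional space is of finite type and has image in a finite dimensional subspace, as recalled in Section~\ref{sec:prelim}); and it is Schur, with $\n{P_\alpha}_{\Hcb}\le\n{T_\alpha}_{\cb}^m\n{P}_{\Hcb}$, directly from the definition of the $\Hcb$-norm via amplifications. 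Thus $P_\alpha\in\mathcal{P}_{s,f,0}(V,W)$. Let $P_\infty$ denote the amplification of $P$, which by Remark~\ref{rmk:polynomials} and Corollary~\ref{corollary-Schwarz}(c) is an $m$-homogeneous polynomial; being homogeneous it is defined, continuous, and Lipschitz on bounded subsets of all of $\K(V)$ via $P_\infty\big((x_{ij})\big)=\big(P(x_{ij})\big)$, and one has the pointwise identity $(P_\alpha)_\infty=P_\infty\circ(T_\alpha)_\infty$ on $\K(V)$. Given a compact set $K\subset B_{\K(V)}$, it is bounded, and since $(T_\alpha)_\infty\to\mathrm{Id}$ uniformly on $K$, there is an index past which $(T_\alpha)_\infty(K)$ lies in a fixed bounded set $B'\supseteq K$; letting $L$ be a Lipschitz constant for $P_\infty$ on $B'$,
\[
\sup_{y\in K}\n{(P_\alpha)_\infty(y)-P_\infty(y)}\le L\,\sup_{y\in K}\n{(T_\alpha)_\infty(y)-y}\longrightarrow 0 .
\]
By Definition~\ref{defn-tau_0} this says $P_\alpha\to P$ in $\tau_0$, hence in $\tau_\gamma$ on $\mathcal{P}_{s,0}(^mV,W)$. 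Unwinding the reductions, $P$ belongs to the $\tau_\gamma$-closure of $\mathcal{P}_{s,f,0}(V,W)$, hence so does every $f\in\Hcb(B_V,W)$; passing to a net gives the statement.

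The main obstacle is that the OAP only supplies finite rank \emph{completely bounded} maps $T_\alpha$, not completely contractive ones, so $(T_\alpha)_\infty$ need not preserve $B_{\K(V)}$. This forces $P$ to be an actual polynomial, so that $P_\infty$ is defined and Lipschitz on bounded sets beyond the unit ball; and this is exactly why the preliminary passage to polynomials via Lemma~\ref{lemma:approximation-by-polynomials}(b) is indispensable, since precomposing an arbitrary holomorphic $f\in\Hcb(B_V,W)$ with a finite rank map does not produce a polynomial. A secondary point demanding care is the bookkeeping of the two topologies $\tau_0$ and $\tau_\gamma$, which coincide only on spaces of homogeneous polynomials.
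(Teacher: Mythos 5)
Your proposal is correct and follows essentially the same route as the paper: reduce via Lemma~\ref{lemma:approximation-by-polynomials} to a homogeneous $P\in\mathcal{P}_{s,0}(^mV,W)$, precompose with the OAP net $(T_\alpha)$ to get finite type Schur polynomials $P\circ T_\alpha$, prove uniform convergence of the amplifications on compact subsets, and conclude with Proposition~\ref{prop-topologies-equivalent-on-homogeneous-polynomials}. The only cosmetic difference is that you invoke Lipschitzness of $P_\infty$ on bounded sets where the paper writes out the equivalent telescoping/binomial estimate via $\widecheck{P}_\infty$; your extra care about $(T_\alpha)_\infty$ not preserving $B_{\K(V)}$ is a valid point that the paper handles implicitly the same way.
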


\begin{proof}
Without loss of generality we can assume $f \in \overline{B}_{\Hcb(B_V,W)}$.
Moreover, by Lemma \ref{lemma:approximation-by-polynomials} it suffices to prove the result for each homogeneous polynomial $P\in\mathcal P_{s,0}(^mV,W)$ (for any $m$). By the OAP of $V$ along with Proposition \ref{prop-stable-point-norm} we know that there is a net $(T_\alpha)$  of finite rank mappings in $\CB(V,V)$ such that $(T_\alpha)_\infty\to Id_\infty=Id$ uniformly on the compact subsets of $\K(V)$. Now, by Remark \ref{rmk:polynomials}, $P_\alpha=P\circ T_\alpha$ are finite type $m$-homogeneous Schur polynomials. By the usual binomial formula, $(P_\alpha)_\infty\to P_\infty$ uniformly on the compact subsets of $B_{\K(V)}$, because $\|\widecheck{P}_\infty\| \le \frac{m^m}{m!}\|P_\infty\|$ and

$$
\|(P_\alpha)_\infty (u)-P_\infty(u)\|=\|P_\infty((T_\alpha)_\infty(u))-P_\infty(u)\|\le \sum_{k=0}^{m-1} \|\widecheck{P}_\infty\| \|u\|^k \|(T_\alpha)_\infty(u)-u\|^{m-k}.
$$

Now,  Proposition \ref{prop-topologies-equivalent-on-homogeneous-polynomials} implies that $P_\alpha\overset{\tau_\gamma}{\to} P$ finishing the proof.
\end{proof}

Finally we have all the ingredients to prove that the OAP transfers between $V$ and $\Gcb(B_V)$, an operator space version of \cite[Thm. 5.4]{Mujica-linearization}.

\begin{theorem}\label{thm-OAP-transfers}
	$V$ has the OAP if and only if $\Gcb(B_V)$ has the OAP.
\end{theorem}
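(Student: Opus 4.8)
The plan is to follow the blueprint of \cite[Thm.~5.4]{Mujica-linearization} together with its streamlined form in \cite[Thm.~3.10]{aron2023linearization}; thanks to the preparatory results already established, both implications are short.

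For the direction ``$\Gcb(B_V)$ has the OAP $\Rightarrow$ $V$ has the OAP'' I would invoke Proposition~\ref{Prop:complemented subspace}, which realizes $V$ as a completely contractively complemented subspace of $\Gcb(B_V)$: let $j : V \to \Gcb(B_V)$ be the completely isometric inclusion and $P : \Gcb(B_V) \to V$ a completely contractive projection with $Pj = \mathrm{Id}_V$. If $(R_\alpha)$ is a net of finite rank maps in $\CB(\Gcb(B_V),\Gcb(B_V))$ converging to the identity in the stable point norm topology, then $(PR_\alpha j)$ is a net of finite rank maps in $\CB(V,V)$, and using Proposition~\ref{prop-stable-point-norm} together with the facts that $j_\infty$ carries compact subsets of $\K(V)$ to compact subsets of $\K(\Gcb(B_V))$ and that $P_\infty$ is bounded, one checks that $(PR_\alpha j)_\infty = P_\infty (R_\alpha)_\infty j_\infty \to P_\infty j_\infty = \mathrm{Id}_{\K(V)}$ uniformly on compact subsets of $\K(V)$. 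Hence $V$ has the OAP. (This is just the routine fact that the OAP passes to completely complemented subspaces.)

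For the converse, suppose $V$ has the OAP. I would apply Proposition~\ref{prop-OAP-implies-polynomials-are-tau-gamma-dense} to the completely bounded holomorphic function $\delta_V \in \Hcb(B_V,\Gcb(B_V))$, obtaining a net $(P_\alpha)$ of finite type Schur polynomials in $\mathcal{P}_{s,f,0}(V,\Gcb(B_V))$ with $P_\alpha \overset{\tau_\gamma}{\to} \delta_V$. Linearizing through Theorem~\ref{thm-linearization} produces maps $\widehat{P_\alpha} \in \CB(\Gcb(B_V),\Gcb(B_V))$; since each $P_\alpha$ has finite-dimensional range, Remark~\ref{remark-linealization-preserves-properties} shows that $\widehat{P_\alpha}$ is finite rank. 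By the topological isomorphism of Theorem~\ref{thm-isomorphismtau-gamma-tau0}, $\widehat{P_\alpha} \to \widehat{\delta_V}$ in the $\tau_0$ topology of $\CB(\Gcb(B_V),\Gcb(B_V))$; and $\widehat{\delta_V} = \mathrm{Id}_{\Gcb(B_V)}$, because $\widehat{\delta_V}\circ\delta_V = \delta_V$ and $\spa\{\delta_V(x)\}_{x\in B_V}$ is dense in $\Gcb(B_V)$. Thus $\mathrm{Id}_{\Gcb(B_V)}$ is a $\tau_0$-limit of finite rank completely bounded endomorphisms of $\Gcb(B_V)$, i.e.\ $\Gcb(B_V)$ has the OAP.

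I do not expect a genuine obstacle here: all the heavy lifting has already been done in Propositions~\ref{prop-Grothendieck-abs-mat-conv}, \ref{thm-isomorphismtau-gamma-tau0}, \ref{prop-topologies-equivalent-on-homogeneous-polynomials} and \ref{prop-OAP-implies-polynomials-are-tau-gamma-dense}, which together show that the linearization map $f\mapsto\widehat f$ intertwines $\tau_\gamma$ on $\Hcb(B_V,W)$ with $\tau_0$ on $\CB(\Gcb(B_V),W)$ and that finite type polynomials are $\tau_\gamma$-dense. The only points requiring care are that $\delta_V$ genuinely takes values in $\Gcb(B_V)$ (so the linearized polynomials are honest endomorphisms of $\Gcb(B_V)$) and that the definition of the OAP imposes no boundedness on the approximating net, so no uniform control of $\n{\widehat{P_\alpha}}_{\cb}$ is needed.
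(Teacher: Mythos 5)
Your proposal is correct and follows essentially the same route as the paper's proof: the forward direction via Proposition~\ref{Prop:complemented subspace} and the standard inheritance of the OAP by completely complemented subspaces, and the converse by applying Proposition~\ref{prop-OAP-implies-polynomials-are-tau-gamma-dense} to $\delta_V$ and transferring through the topological isomorphism of Theorem~\ref{thm-isomorphismtau-gamma-tau0}. The extra details you supply (the explicit check that $(PR_\alpha j)_\infty$ converges on compacta, and the identification $\widehat{\delta_V}=\mathrm{Id}$) are correct elaborations of steps the paper leaves implicit.
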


\begin{proof}
If $\Gcb(B_V)$ has OAP, then so does $V$ because it is completely isometric to a completely contractively complemented subspace of $\Gcb(B_V)$ by  Proposition \ref{Prop:complemented subspace}.

Suppose now that $V$ has OAP.
	Consider $\delta_V \in  \Hcb(B_V,\Gcb(B_V))$ (see Proposition \ref{prop-delta-is-cb-holomorphic}). By Proposition \ref{prop-OAP-implies-polynomials-are-tau-gamma-dense}, there exists a net $(P_\alpha)\subset \mathcal P_{s,f,0}(V,\Gcb(B_V))$ such that $P_\alpha\overset{\tau_\gamma}{\to} \delta_V$. By the isomorphism given in Theorem \ref{thm-isomorphismtau-gamma-tau0}, we have that $(\widehat{P_\alpha})\subset \CB(\Gcb(B_V),\Gcb(B_V))$ is a net of finite rank linear mappings (see also Remark \ref{remark-linealization-preserves-properties}) satisfying $\widehat{P_\alpha}\overset{\tau_0}{\to} Id_V$, which means that $\Gcb(B_V)$ has OAP.
\end{proof}

\section*{Acknowledgements}

We would like to thank our friends Maite Fernández-Unzueta for her generous input on early versions of this work, and Luis Carlos Garc\'ia-Lirola for explaining to us the arguments referred to in the proof of \cite[Lemma 3.4]{aron2023linearization}.

\bibliography{references}
\bibliographystyle{abbrv}

\end{document}